\numberwithin{equation}{section}
\newcommand{\bl}{\mathrm{BL}}
\newcommand{\bv}{\mathrm{BV}}
\newcommand{\ch}{\mathsf{Ch}}
\newcommand{\con}{\mathrm{C}}
\newcommand{\cc}{\mathsf{c}}
\newcommand{\CC}{\mathsf{C}}
\newcommand{\CD}{\mathsf{CD}}
\renewcommand{\d}{\mathsf{d}}
\newcommand{\di}{\,\mathrm{d}}
\newcommand{\dom}{\mathrm{Dom}}
\newcommand{\eps}{\varepsilon}
\newcommand{\heat}{\mathsf{H}}
\newcommand{\leb}{\mathrm{L}}
\newcommand{\lip}{\mathrm{Lip}}
\newcommand{\m}{\mathfrak m}
\newcommand{\meas}{\mathscr{M}}
\newcommand{\N}{\mathbb{N}}
\newcommand{\per}{\mathsf{Per}}
\newcommand{\prob}{\mathscr{P}}
\newcommand{\R}{\mathbb{R}}
\newcommand{\RCD}{\mathsf{RCD}}
\newcommand{\slope}{\mathsf{D}}
\newcommand{\sob}{\mathrm{W}}
\newcommand{\var}{\mathsf{TV}}
\newcommand{\wa}{\mathsf{W}}
\newcommand{\weakto}{\rightharpoonup}
\newcommand{\tcc}{\widetilde\cc}
\renewcommand{\theta}{\vartheta}
\theoremstyle{plain}
\newtheorem{lemma}{Lemma}[section]
\newtheorem{theorem}[lemma]{Theorem}
\newtheorem{corollary}[lemma]{Corollary}
\theoremstyle{definition}
\newtheorem{definition}[lemma]{Definition}
\newtheorem{remark}[lemma]{Remark}
\begin{document}
	
\title[Properties of Lipschitz smoothing heat semigroups]{Properties of Lipschitz smoothing heat semigroups}

\author[N.~De Ponti]{Nicolò De Ponti}
\address[N.~De Ponti]{Universit\`a di Milano-Bicocca, Dipartimento di Matematica e Applicazioni, via Roberto Cozzi 55, 20125 Milano (MI), Italy}
\email{nicolo.deponti@unimib.it}
	
\author[G.~Stefani]{Giorgio Stefani}
\address[G.~Stefani]{Universit\`a degli Studi di Padova, Dipartimento di Matematica ``Tullio Levi-Civita'',
Via Trieste 63, 35121 Padova (PD), Italy}
\email{giorgio.stefani@unipd.it {\normalfont or} giorgio.stefani.math@gmail.com}
	
\date{\today}

\keywords{Heat semigroup, infinitesimal Hilbertianity, smoothing property, indeterminacy, nodal set, Buser inequality, Wasserstein distance}

\subjclass[2020]{Primary 53C23. Secondary 31E05, 58J35}

\thanks{\textit{Acknowledgements}.
The authors thank Lorenzo Dello Schiavo for his valuable comments on a preliminary version of this work, as well as the two referees for their careful reading and observations.
The authors are members of the Istituto Nazionale di Alta Matematica (INdAM), Gruppo Nazionale per l'Analisi Matematica, la Probabilità e le loro Applicazioni (GNAMPA).
This work was initiated and primarily developed while the authors were postdoctoral researchers at the Scuola Internazionale Superiore di Studi Avanzati (SISSA) in Trieste, Italy. The authors would like to express their gratitude to the institution for providing excellent working conditions and a stimulating atmosphere.
}

\begin{abstract}
We prove several functional and geometric inequalities only assuming the linearity and a quantitative $\mathrm{L}^\infty$-to-Lipschitz smoothing of the heat semigroup in metric-measure spaces. 
Our results comprise a Buser inequality, a lower bound on the size of the nodal set of a Laplacian eigenfunction, and different estimates involving the Wasserstein distance.
The approach works in a large variety of settings, including Riemannian manifolds with a variable Kato-type lower bound on the Ricci curvature tensor, $\mathsf{RCD}(K,\infty)$ spaces, and some sub-Riemannian structures, such as Carnot groups, the Grushin plane and the $\mathbb{SU}(2)$ group.
\end{abstract}

\maketitle

\section{Introduction}

\subsection{Framework}

In the last decades, several authors have deeply investigated the connections between fundamental functional and geometric inequalities 
and the properties of the \emph{heat semigroup} $(\heat_t)_{t\ge 0}$, especially 
its \emph{linearity} and \emph{regularizing} nature.
We refer the reader for instance to~\cites{AGS14-C,AGS15,BGL14,EKS15,CJKS20,Gr09,S-C10} and the references therein.

The \emph{linearity} of the heat semigroup is not automatically granted by definition, as for example $(\heat_t)_{t\ge0}$ is not additive in the so-called \emph{Finsler structures}, see~\cite{OS09}.
In the non-smooth framework, the heat semigroup is defined as the $\leb^2$ gradient flow of the \emph{Cheeger energy} (see~\cites{A18} for an account) and its linearity goes under the name of \emph{infinitesimal Hilbertianity} of the ambient space.
This property plays a crucial role in different fundamental aspects of the theory, including the development of 
a powerful non-smooth analogue of Differential Calculus~\cites{Gigli15}.

\emph{Smoothing properties} of 
the heat semigroup, such as the (generalized) \emph{Bakry--\'Emery inequality}~\cites{AGS15,BE85,BGL14,G22},
\begin{equation}
\label{eqi:wbe}
|\nabla\heat_tf|^2
\le 
\kappa(t)^2\,\heat_t\left(|\nabla f|^2\right),
\quad
\text{for}\ t\ge0,
\end{equation}
for a suitable $\kappa\colon[0,\infty)\to(0,\infty)$, usually encode curvature-type information about the ambient space.
For instance, on a complete Riemannian manifold $(M,\mathrm{g})$, the validity of~\eqref{eqi:wbe} with $\kappa(t)=e^{-Kt}$ for some $K\in\R$ is equivalent to the lower bound $\mathrm{Ric}_{\mathrm g}\ge K$ on the Ricci curvature tensor, e.g., see~\cite{vRS05}*{Th.~1.3}.

In passing, we observe that the linearity does not automatically imply any smoothing property of the heat semigroup, see the example in~\cite{AGS14-C}*{Rem.~4.12}.

\subsection{Main aim and results}

An important consequence of~\eqref{eqi:wbe} is the \emph{$\leb^\infty$-to-Lipschitz contraction} of $(\heat_t)_{t\ge0}$ ($\leb^\infty$-to-$\lip$ for short), i.e.,
\begin{equation}
\label{eqi:feller}
f\in \leb^\infty(X)
\implies
\heat_tf\in\lip_b(X)\
\text{with}\
\|\nabla\heat_t f\|_{\leb^\infty}\leq \cc(t)\,\|f\|_{\leb^{\infty}}\
\text{for}\ t>0,
\end{equation}
for a suitable $\cc\colon(0,\infty)\to(0,\infty)$ (see \cref{def:feller} for the precise statement).
Our aim is to show how several functional and geometric inequalities can be deduced uniquely from the linearity of $(\heat_t)_{t\ge0}$ and~\eqref{eqi:feller} in a general  metric-measure space $(X,\d,\m)$ (see \cref{sec:prel} for a detailed description of our setting).

The novelty of our approach lies in its  \textit{minimalistic} point of view, since we do not invoke any stronger curvature-type condition. 
As a byproduct, all results not only come with plain and concise proofs, but also apply to a wide range of examples, including metric-measure spaces with a synthetic constant lower curvature bound, Riemannian manifolds with a variable Kato-type lower bound on the Ricci curvature tensor and several smooth sub-Riemannian structures.
In view of its simplicity and flexibility, we do believe that our strategy may be revisited for other types of semigroups.
We refer to \cref{sec:explicit bound,sec:examples} for the comparison with the  existing literature and the possible  extensions to other settings.

The techniques we employ have been partly applied to some specific frameworks.
However, our work provides new contributions in some contexts in which they were not previously available.
Our main results include but are not limited to:

\begin{enumerate}[label=$\circ$,leftmargin=3ex,itemsep=.5ex,topsep=.5ex]

\item 
\emph{an indeterminacy estimate}: 
a lower bound on the Was\-ser\-ste\-in distance between positive and negative parts of a function $f\in\leb^1\cap\leb^\infty$ in terms of its $\leb^1$ and $\leb^\infty$ norms and the perimeter of its zero set; 

\item
\emph{the size of the nodal set}:
a lower bound on the perimeter of the zero set of a Laplacian eigenfunction $f_{\lambda}$ in terms of its eigenvalue $\lambda$ and of its $\leb^1$ and $\leb^{\infty}$ norms;

\item
\emph{an indeterminacy-type estimate for eigenfunctions}:
a lower bound on the Wasserstein distance between positive and negative parts of an eigenfunction $f_\lambda$ in terms of its eigenvalue $\lambda$ and and of its $\leb^1$ norm;

\item
\emph{a Buser-type inequality}:
an upper bound on the first non-trivial eigenvalue of the Laplacian in terms of the Cheeger constant of the ambient space;

\item 
\emph{a transport--Sobolev inequality}:
an upper bound on the $\leb^1$ norm of a $\bv$ function $f$  in terms of its total variation and of the Wasserstein distance between its positive and negative parts.

\end{enumerate}

The proof of each result 
consists of two main steps.
We first derive \emph{implicit} inequalities depending on $t>0$, and then we provide their \emph{explicit} versions by optimizing with respect to the parameter $t$ in terms of a given upper control on the function $\cc(t)$ in~\eqref{eqi:feller}. 
The precise form of the inequalities depends on the expression of the upper bound on $\cc(t)$---typically, on its asymptotic behavior as $t\to 0^+$.
In all the aforementioned examples, a power-logarithmic-type upper control on $\cc(t)$ is explicitly available.

\subsection{Organization of the paper}
In \cref{sec:prel}, we detail the notation and several preliminary results that we use throughout the paper.
In \cref{sec:implicit bound}, we introduce the $\leb^\infty$-to-$\lip$ property (see \cref{def:feller}) and we deduce its consequences in their implicit form.
In \cref{sec:explicit bound}, by prescribing an  upper bound on  $\cc(t)$ (see~\eqref{eq:controls} and the more general~\eqref{eq:log-controls}), we provide explicit versions of our results.
In \cref{sec:examples}, we discuss the settings to which our approach applies.

\section{Preliminaries}
\label{sec:prel}

\subsection{Function spaces}

We let $(X,\d)$ be a complete and separable metric space.

We let $\con_{b}(X)$ be the space of real-valued, bounded and continuous functions on $X$.
We let $\mathrm{Lip}(X)$, $\mathrm{Lip}_b(X)$ and $\mathrm{Lip}_{bs}(X)$ be the space of Lipschitz functions which are real-valued, bounded and with bounded support, respectively, and we let $\lip(f)\in[0,\infty)$ denote the Lipschitz constant of the function $f\in\lip(X)$. 
 
Given any non-negative Borel measure $\m$ on $X$, for $p\in [1,\infty]$ we let $\leb^p(X,\m)$ be the Lebesgue space of $p$-integrable functions. 
To keep the notation short, we often write $\leb^p(X)$ or simply $\leb^p$ in place of $\leb^p(X,\m)$. 
These spaces will be endowed with the norms
\begin{align*}
\|f\|_{\leb^p}&=\left(\int_X |f|^p\di\m\right)^{\frac{1}{p}} \quad \text{for}\ p\in[1,\infty),
\\[1ex]
\|f\|_{\leb^\infty}&=\inf\big\{C\in[0,\infty)\ :\ |f(x)|\le C \ \text{for $\m$-a.e.} \ x\in X\big\}.
\end{align*}
Note that $\|\cdot\|_{\leb^p}$  is well-defined (possibly equal to $\infty$) on $\m$-measurable functions on~$X$.
As customary, we identify $\leb^p$ functions up to $\m$-negligible sets. 

\subsection{Space of measures}

We let $\meas(X)$ be the space of finite Borel measures on~$X$ and we let $\meas_+(X)=\left\{\mu\in\meas(X):\mu\ge0\right\}$.
We also let 
\begin{equation*}
\mathscr{P}_1(X)
=
\left\{
\mu\in\meas_+(X)
:
\mu(X)=1\
\text{and}\
\int_X\d(x,x_0)\di\mu(x)<\infty\
\text{for some}\ x_0\in X
\right\}
\end{equation*}
be the space of \emph{probability} measures with finite \emph{$1$-moment} on~$X$.

The \emph{total variation} of $\mu\in\meas(X)$ is defined as 
\begin{equation*}
|\mu|(X)
=
\sup\left\{
\int_X f\di\mu
:
f\in \con_b(X),\ \|f\|_{\leb^\infty}\le1
\right\}
\in[0,\infty).
\end{equation*}
Moreover, we say that  $(\mu_k)_{k\in\N}\subset\meas(X)$ \emph{weakly converges} to $\mu\in\meas(X)$, and we write $\mu_k\weakto\mu$ in $\meas(X)$ as $k\to\infty$, if 
\begin{equation}
\label{eq:def_weak_conv}
\lim_{k\to\infty}
\int_Xf\di\mu_k
=
\int_Xf\di\mu
\end{equation}
for every $f\in C_b(X)$.
Finally, we recall that the set
\begin{equation}
\label{eq:deltas}
\mathscr Q(X)
=
\left\{\sum_{k=1}^nc_k\delta_{x_k}: n\in\N,\ x_k\in X, c_k\in\R,\ k=1,\dots,n\right\}
\end{equation}
is dense in $\meas(X)$ with respect to the topology induced by the weak convergence, see~\cite{Bogachev07}*{Ex.~8.1.6(i)} for instance.
As above, we let $\mathscr Q_+(X)=\mathscr Q(X)\cap\mathscr M_+(X)$.

\subsection{Wasserstein \texorpdfstring{$1$}{1}-distance}

The \emph{$1$-Wasserstein distance} $\wa_1$ between $\mu_{1},\mu_{2}\in\meas_+(X)$ is given by   
\begin{equation}
\label{eq:dual_W1}
\wa_1(\mu_1,\mu_2)=\sup\left\{\int_X f\di(\mu_1-\mu_2) \ : \ f\in \lip_b(X), \ \lip(f)\le 1\right\}\in[0,\infty].
\end{equation} 
Whenever $\mu_1,\mu_2\in\meas_+(X)$, a sufficient (but not necessary) condition for $\wa_1(\mu_1,\mu_2)<\infty$ is that $C\mu_1,C\mu_2\in \mathscr{P}_1(X)$ for some constant $C\in (0,\infty)$.
We recall that $(\prob_1(X),\wa_1)$ is a complete and separable metric space.
Moreover, given $(\mu_k)_{k\in\N}\subset\prob_1(X)$ and $\mu\in\prob_1(X)$, $\mu_k\xrightarrow{\wa_1}\mu$ as $k\to\infty$ if and only if $\mu_k\weakto\mu$ in $\meas(X)$ as $k\to\infty$ and, for some $x_0\in X$, 
\begin{equation*}
\lim_{k\to\infty}
\int_X\d(x,x_0)\di\mu_k(x)
=
\int_X\d(x,x_0)\di\mu(x).
\end{equation*}
Finally, thanks to~\cite{V09}*{Th.~6.18}, the set $\mathscr Q_+(X)\cap\prob(X)$ is  $\wa_1$-dense in $\prob_1(X)$.

\subsection{\texorpdfstring{$\bl^\star$}{BL*} distance}

The \emph{bounded-Lipschitz dual distance} between $\mu_1,\mu_2\in\meas(X)$ is defined as
\begin{equation*}
\bl^\star(\mu_1,\mu_2)
=
\sup
\left\{
\int_Xf\di(\mu_1-\mu_2)
:
f\in\lip_b(X),\ \|f\|_{\bl}\le1
\right\}\in[0,\infty),
\end{equation*}
where $\|f\|_{\bl}=\max\{\lip(f),\,\|f\|_{\leb^\infty}\}$ for every $f\in\lip_b(X)$.
We recall the following result, see~\cite{Bogachev07}*{Th.~8.3.2} for instance.

\begin{theorem}
\label{res:bl_top}
The topology induced by the $\bl^\star$  distance coincides with the topology induced by the weak convergence~\eqref{eq:def_weak_conv} on $\meas_+(X)$.
\end{theorem}

\subsection{AC measures}
From now on, we assume that the \emph{reference measure}~$\m$ is a non-negative Borel-regular measure which is finite
on bounded sets and such that $\operatorname{supp}\m=X$.

We define the set of \emph{absolutely continuous measures} on $X$ as 
\begin{equation*}
\meas^{\rm ac}(X)=\{\mu\in\meas(X) : \mu\ll\m\}
\end{equation*}
and we set $\meas^{\rm ac}_+(X)=\{\mu\in\meas^{\rm ac}(X):\mu\ge0\}$. 
Clearly, $\mu\in\meas^{\rm ac}(X)$ if and only if $\mu=f\m$ for some $f\in L^1(X)$ and, moreover, $\mu\in\meas^{\rm ac}_+(X)$ if and only if $f\ge0$.

Thanks to \cref{res:bl_top}, we have the following approximation result, whose proof is briefly sketched below for the convenience of the reader.

\begin{corollary}
\label{res:density_ac}
The set $\meas^{\rm ac}_+(X)$ is dense in $\meas_+(X)$ with respect to the $\bl^\star$ distance.
\end{corollary}

\begin{proof}
Since the set $\mathscr Q(X)$ defined in~\eqref{eq:deltas} is dense in $\meas(X)$ with respect to the topology induced by the weak convergence (recall~\cite{Bogachev07}*{Ex.~8.1.6(i)}), by \cref{res:bl_top} it is not restrictive to assume that $\mu\in\mathscr Q_+(X)$.
By the triangular inequality and by homogeneity of the $\bl^\star$ distance, it is not restrictive to further assume that $\mu=\delta_{\bar x}$ for some $\bar x\in X$.
In this case, owing to the fact that $\m$ is finite
on bounded sets and $\operatorname{supp}\m=X$, we can define $\mu_\eps=f_\eps\m$ with $f_\eps
=
\frac{\chi_{B_\eps(\bar x)}}{\m(B_\eps(\bar x))}$ for $\eps>0$.
Observing that 
\begin{equation*}
\left|
\int_Xg\di(\delta_{\bar x}-\mu_\eps)
\right|
=
\left|
\int_{B_\eps(\bar x)}\frac{g-g(\bar x)}{\m(B_\eps(\bar x))}\di \m
\right|
\le 
\lip(g)
\int_{B_\eps(\bar x)}\frac{\d(x,\bar x)}{\m(B_\eps(\bar x))}\di \m(x)
<
\lip(g)
\,\eps
\end{equation*}
whenever $g\in\lip_b(X)$, we get that $\bl^\star(\delta_{\bar x},\mu_\eps)\le\eps$, concluding the proof.
\end{proof}

\subsection{Slope}

The \emph{slope} of $f\in\lip(X)$ is defined as
\begin{equation*}
|\slope f|(x)=
\begin{cases}
\displaystyle
\limsup_{y\to x}\frac{|f(y)-f(x)|}{\d(y,x)}
&
\text{if $x\in X$ is an accumulation point},
\\[1ex]
0
&
\text{if $x\in X$ is isolated}.
\end{cases}
\end{equation*}

\subsection{Relaxed gradient}

Since the set 
$\left\{f\in \leb^2(X) : f\in\mathrm{Lip}_b(X),\ |\slope f|\in \leb^2(X)\right\}$
is dense in~$\leb^2(X)$, we can say that $G\in \leb^2(X)$ is a \emph{relaxed gradient} of $f\in \leb^2(X)$ if there exists a sequence $(f_k)_{k\in\N}\subset \leb^2(X)\cap\lip(X)$ such that
$f_k\to f$ in $\leb^2(X)$ and $|\slope f_k|\rightharpoonup \widetilde{G}$ in $\leb^2(X)$ for some $\widetilde G\in\leb^2(X)$ such that $\widetilde{G}\le G$ $\m$-a.e.\ in $X$.

The set of all the relaxed gradients of $f\in\leb^2(X)$ is a closed and convex subset of $\leb^2(X)$. 
Thus, when such set is not empty, it admits an element of minimal $\leb^2$ norm, called the \emph{minimal relaxed gradient} and denoted by $|\slope f|_{w}$. 
Such element is minimal also in the $\m$-a.e.\ sense, meaning that $|\slope f|_{w}\le G$ $\m$-a.e.\ for any relaxed gradient $G$ of $f$.
In particular, $|\slope f|_{w}\le |\slope f|$ $\m$-a.e.\ for every $f\in \lip_{bs}(X)$.

\subsection{Cheeger energy}

We let 
\begin{equation*}
\ch(f)
=
\inf\left\{
\liminf_{k\to\infty}
\frac12\int_X|\slope f_k|^2\di\m
:
f_k\in\lip_{bs}(X,\m),
\
f_k\to f\ \text{in}\ \leb^2(X,\m)\ \text{as}\ k\to\infty
\right\}
\end{equation*}
be the \emph{Cheeger energy} of $f\in \leb^2(X)$.
Thanks to~\cite{AGS14-C}*{Ths.~6.2 and 6.3}, we can write
\begin{equation*}
\ch(f)=
\begin{cases}
\displaystyle\frac{1}{2}\int_X |\slope f|^2_{w}\di\m
&
\text{if $f$ admits a relaxed gradient},
\\[2ex]
+\infty
&
\text{otherwise}.
\end{cases}
\end{equation*} 
As usual, we set
\begin{equation*}
\sob^{1,2}(X)
=
\sob^{1,2}(X,\d,\m)
=
\left\{
f\in \leb^2(X)
:
\ch(f)<\infty
\right\}.
\end{equation*}
The Cheeger energy is a $2$-homogenous, convex and lower semicontinuous functional on $\leb^2(X)$ and the set $\sob^{1,2}(X)$, endowed with the norm
\begin{equation*}
\|f\|^2_{\sob^{1,2}}=\|f\|^2_{\leb^2}+2\ch(f),
\quad
f\in\sob^{1,2}(X),
\end{equation*}
is dense in~$\leb^2(X)$.

\subsection{Laplacian operator}

We let $\partial^{-}\ch(f)\subset \leb^2(X)$ be the \emph{subdifferential} of $\ch$ at $f\in \leb^2(X)$, i.e., $\ell\in\partial^{-}\ch(f)$ if and only if 
\begin{equation*}
\ch(g)\ge
\ch(f)
+
\int_X \ell\,(g-f)\di\m  \quad \textrm{for all}\ g\in \leb^2(X).
\end{equation*}
We write $f\in\dom(\Delta)$ if $f\in \leb^2(X)$ is such that $\partial^{-}\ch(f)\neq \emptyset$.
For $f\in \dom(\Delta)$, we let $\Delta f$ be the element of minimal $\leb^2$ norm in $-\partial^{-}\ch(f)$ and we call it the \emph{Laplacian} of~$f$. 

\subsection{Heat semigroup}

By the classical theory of gradient flow in Hilbert spaces, for every $f\in \leb^2(X)$ there exists a unique locally Lipschitz curve $t\mapsto \heat_tf$ from $(0,\infty)$ to $\leb^2(X)$, called the \emph{heat flow at time~$t$ starting from~$f$}, such that 
\begin{equation}
\label{eq:heat_flow}
\begin{cases}\dfrac{\mathrm{d}}{\mathrm{d} t}\heat_tf=\Delta \heat_tf
&
\text{for a.e.}\ t \in (0,\infty),
\\[1.5ex]
\heat_tf\to f\ \text{in}\ \leb^2(X)
&
\text{as}\ t\to 0^{+}.
\end{cases}
\end{equation}
We let $\heat_0=\mathrm{Id}$ be the identity operator in $\leb^2(X)$, so that $(\heat_t)_{t\ge 0}$ is a (possibly, non-linear) semigroup, called the \emph{heat semigroup}. 
Because of the $2$-homogeneity of the Cheeger energy, $\heat_t$ and $\Delta$ are $1$-homogeneous, i.e.,
\begin{equation}
\label{eq:1-homo}
\begin{array}{lr}
\heat_t(\lambda f)=\lambda\, \heat_tf
&
\text{for}\
f\in \leb^2(X),\ 
\lambda\in\R,
\\[1ex]
\Delta(\lambda g)=\lambda\,\Delta g
&
\text{for}\
\
g\in \dom(\Delta),\ 
\lambda\in\R.
\end{array}
\end{equation}
Moreover, for $t\ge0$ and $p\in[1,\infty]$, the heat semigroup satisfies the \emph{contraction property}
\begin{equation}
\label{eq:contraction}
\|\heat_tf- \heat_tg\|_{\leb^p}\le \|f-g\|_{\leb^p} \quad \forall f,g\in \leb^2(X)\cap \leb^p(X)
\end{equation}
and the \emph{maximum principle}
\begin{equation}
\label{eq:maxprinciple}
f\leq C\
\text{$\m$-a.e.\ in $X$ for some $C\in\R$}
\implies
\heat_tf\leq C\
\text{$\m$-a.e.\ in $X$}
\end{equation}
(in particular, $\heat_t$ is \emph{sign preserving}). 
Finally, assuming that
\begin{equation}
\label{ass: meas exp}
\exists\,
A,B>0\ \text{and}\ \bar x\in X\ \text{such that}\ 
\m(\{x\in X:\d(x,\bar{x})< r\})\le Ae^{Br^2}\
\text{for all}\ r>0,
\end{equation}
the heat semigroup also satisfies the \emph{mass preserving property}
\begin{equation}
\label{eq:mass_preserving}
\int_{X}\heat_tf\di\m=\int_{X}f\di\m 
\quad \text{whenever $f \in \leb^2(X)\cap \leb^1(X)$ and $t>0$}.
\end{equation}

\subsection{Infinitesimal Hilbertianity and non-smooth Calculus}

From now on, we assume that the metric-measure space $(X,\d,\m)$ is \emph{infinitesimally Hilbertian}, meaning that
\begin{equation}
\label{eq:inf_hilb}
2\ch(f)+2\ch(g)=\ch(f+g)+\ch(f-g)
\quad
\text{for all}\
f,g\in \sob^{1,2}(X)
.
\end{equation}
In this case, the heat flow is also additive, and thus $(\heat_t)_{t\ge 0}$ is a linear semigroup with the energy $\mathcal{E}=2\ch$ being the associated \emph{strongly-local Dirichlet form}.

By the density of $\leb^{2}(X)\cap \leb^{p}(X)$ in $\leb^{p}(X)$ and in virtue of~\eqref{eq:contraction}, $\heat_t$ extends to a strongly continuous linear semigroup of contractions in $\leb^{p}(X)$ for any $p\in [1,\infty)$, for which we keep the same notation. 
By duality, $\heat_t$ also extends to a linear and weakly$^\star $-continuous semigroup of contractions in $\leb^{\infty}(X)$ such that
\begin{equation}
\label{eq:heat_in_L_infty}
\int_X g\,\heat_tf\di\m
=
\int_X f\,\heat_tg\di\m 
\quad 
\text{for}\ f\in \leb^{\infty}(X)\ \text{and}\ g\in \leb^1(X).
\end{equation} 

By polarization, there exists a  bilinear form 
\begin{equation*}
(f,g)\mapsto
\int \slope f\cdot \slope g \di\m
\le
\int_X |\slope f|_{w}\,|\slope g|_{w}\di\m 
\quad
\text{for}\
f,g\in \sob^{1,2}(X)
\end{equation*}
satisfying the \emph{integration-by-parts}
\begin{equation}
\label{eq:int grad-grad}
\int_X \slope f\cdot \slope g \, \mathrm d\m 
=
-\int_X f\,\Delta g \, \mathrm d\m
\quad
\text{for}\
f\in \sob^{1,2}(X)\text{ and }g\in \dom(\Delta)\, .
\end{equation}
In addition, the heat semigroup and the Laplacian are \emph{self-adjoint}, i.e.,
\begin{align}
\label{eq: Lapl self}
\int_X f\,\Delta g \di\m 
&=
\int_X g\,\Delta f \di\m \quad 
\text{for} \ f,g\in \dom(\Delta),
\\[1ex]
\label{eq: heat self}
\int_X f\,\heat_t g \di\m
&= 
\int_X g\,\heat_t f \di\m
\quad 
\text{for}\ f,g\in \leb^2(X)\ \text{and} \ t\ge0.
\end{align}
Finally, we recall the commutation
\begin{equation}
\label{eq: Lapl-heat comm}
\heat_t(\Delta f)
=
\Delta \heat_tf
\quad 
\text{for}\ 
f\in \dom(\Delta)\ \text{and} \ t>0
\end{equation} 
and the \textit{a priori} estimate 
\begin{equation}
\label{eq: Lapl heat a priori}
\|\Delta \heat_tf\|_{\leb^2}\le \frac{1}{t}\,\|f\|_{\leb^2} \quad 
\text{for}\ f\in \leb^2(X)\ \text{and} \ t>0.
\end{equation}

Using \eqref{eq: Lapl self},
\eqref{eq: heat self} 
and 
\eqref{eq: Lapl-heat comm}, together with the fact that the heat flow is a semigroup with image contained in the domain of the Laplacian, we get that 
\begin{equation}
\label{eq:Lapl-heat_double_self}
\int_X g\,\Delta \heat_t f\di\m
= 
\int_X f\, \Delta \heat_t g\di\m 
\quad 
\text{for} \ f,g \in \leb^2(X)\ \text{and}\ t>0.
\end{equation}

The following result is a simple consequence of the above properties. 
Although this result may be known to experts, we give its short proof here for the reader's convenience.

\begin{lemma}
\label{res:heat_ibp}
If $f,g\in \sob^{1,2}(X)$, then 
\begin{equation*}
\int_X g\,(f-\heat_tf)\di\m
=
\int_0^t\int_X \slope f\cdot \slope\heat_s g\di\m\di s
\quad
\text{for all}\ t>0.
\end{equation*}
\end{lemma}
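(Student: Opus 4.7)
The plan is to consider the scalar function $\psi\colon[0,t]\to\R$ defined by
$\psi(s)=\int_X g\,\heat_s f\di\m$,
and to show that $\psi(0)-\psi(t)$ coincides with the claimed right-hand side via the fundamental theorem of calculus. The quantity $\psi$ is well-defined since $g\in\leb^2(X)$ and $\heat_s f\in\leb^2(X)$ for every $s\ge 0$. Continuity of $\psi$ on $[0,t]$ follows from the fact that $s\mapsto\heat_s f$ is $\leb^2$-continuous on $[0,\infty)$: on $(0,\infty)$ by the local Lipschitz property built into~\eqref{eq:heat_flow}, and at $s=0^+$ by the initial condition in~\eqref{eq:heat_flow}.

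For $s>0$, the heat flow satisfies $\heat_s f\in\dom(\Delta)$ and $\frac{\d}{\d s}\heat_s f=\Delta\heat_s f$ in $\leb^2(X)$, so pairing with $g\in\leb^2(X)$ yields
\begin{equation*}
\psi'(s)
=
\int_X g\,\Delta\heat_s f\di\m
\stackrel{\eqref{eq:Lapl-heat_double_self}}{=}
\int_X f\,\Delta\heat_s g\di\m
\stackrel{\eqref{eq:int grad-grad}}{=}
-\int_X \slope f\cdot\slope\heat_s g\di\m,
\end{equation*}
where in the last step I use~\eqref{eq:int grad-grad} with $f\in\sob^{1,2}(X)$ and $\heat_s g\in\dom(\Delta)$, both of which are guaranteed for $s>0$.

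To invoke FTC on $[0,t]$, I need $\psi$ absolutely continuous up to the endpoint $s=0$. This is obtained from a uniform bound on $\psi'$: by the Cauchy--Schwarz inequality for the bilinear form and the fact that $s\mapsto\ch(\heat_s g)$ is non-increasing along its own $\leb^2$-gradient flow (so that $\||\slope \heat_s g|_w\|_{\leb^2}^2=2\ch(\heat_s g)\le 2\ch(g)$), I obtain
\begin{equation*}
|\psi'(s)|
\le
\||\slope f|_w\|_{\leb^2}\,\||\slope\heat_s g|_w\|_{\leb^2}
\le
2\sqrt{\ch(f)\,\ch(g)}
\quad\text{for every}\ s\in(0,t).
\end{equation*}
Hence $\psi$ is Lipschitz, in particular absolutely continuous, on $[0,t]$, and the fundamental theorem of calculus combined with the expression of $\psi'$ from the previous paragraph gives $\psi(0)-\psi(t)=\int_0^t\int_X\slope f\cdot\slope\heat_s g\di\m\di s$, which is the claim.

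The only mildly subtle point is justifying the absolute continuity of $\psi$ at $s=0$: the naive bound $|\psi'(s)|\le\|g\|_{\leb^2}\|\Delta\heat_s f\|_{\leb^2}$ combined with~\eqref{eq: Lapl heat a priori} would give a non-integrable $1/s$ singularity. The rewriting of $\psi'(s)$ as a gradient--gradient pairing, together with the monotonicity of the Cheeger energy along the heat flow, is what removes the singularity and allows FTC to be applied in a standard way.
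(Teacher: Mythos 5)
Your proposal is correct and follows essentially the same route as the paper: differentiate $s\mapsto\int_X g\,\heat_s f\di\m$, transfer the Laplacian from $\heat_sf$ to $\heat_sg$ via the double self-adjointness identity, integrate by parts to obtain the gradient--gradient pairing, and integrate in $s$. The only difference is that you justify the integrability up to $s=0$ explicitly via Cauchy--Schwarz and the monotonicity of the Cheeger energy along the flow, whereas the paper delegates this point to a cited lemma; your argument for that step is sound.
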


\begin{proof}
Since $\ch$ is quadratic and $f\in \sob^{1,2}(X)$, we know that 
\begin{equation*}
s\mapsto \heat_sf\in C^1((0,+\infty);\dom(\Delta))
\cap 
C^0([0,+\infty);\leb^2(X))
\end{equation*}
with 
\begin{equation*}
\lim_{h\to0}
\frac{\heat_{s+h}f-\heat_sf}h=\Delta \heat_sf
\quad
\text{in}\ \leb^2(X)\ 
\text{for}\ s>0.
\end{equation*}
As a consequence, thanks to~\eqref{eq:Lapl-heat_double_self}, we can compute
\begin{equation*}
\frac{\mathrm d}{\mathrm ds}\int_X g\,\heat_sf\di\m
=
\int_Xg\,\Delta \heat_sf\di\m
=
\int_Xf\,\Delta \heat_sg\di\m
\quad
\text{for}\ s\in(0,t).
\end{equation*}
By~\eqref{eq:int grad-grad}, we can integrate by parts to obtain 
\begin{equation*}
\int_X f\,\Delta \heat_sg\di\m
=
-\int_X \slope f\cdot \slope\heat_sg\di\m
\quad
\text{for}\ s\in(0,t).
\end{equation*}
We can hence integrate in $s\in(0,t)$ to get 
\begin{equation*}
\int_X g\,(f-\heat_tf)\di\m
=
-
\int_0^t
\frac{\mathrm d}{\mathrm ds}\int_X g\,\heat_sf\di\m
\di s
=
\int_0^t
\int_X \slope f\cdot \slope \heat_sg\di\m
\di s,
\end{equation*}
where the right-hand side is well defined since $g\in \sob^{1,2}(X)$ (see~\cite{G22}*{Lem.~2.1}). 
\end{proof}

\subsection{Eigenfunctions and spectrum}

A non-zero $f_{\lambda}\in \dom(\Delta)$ is an \emph{eigenfunction} of the Laplacian relative to the \emph{eigenvalue} $\lambda \in [0,\infty)$ (\emph{$\lambda$-eigenfunction}, for short) if 
\begin{equation*}
-\Delta f_{\lambda}=\lambda\, f_{\lambda}.
\end{equation*}

If $\m(X)<\infty$, then any non-zero constant function is a $0$-eigenfunction and, moreover, every other $\lambda$-eigenfunction~$f_\lambda$  has zero mean, so that
\begin{equation}
\label{eq:eigen_zero_mean}
\int_{X}f_\lambda^-\di\m=\int_{X}f_\lambda^+\di\m.
\end{equation}

For the reader's ease, we recall the following well-known result.

\begin{lemma}
\label{res:heated_eigenf}
If $f_{\lambda}$ is a $\lambda$-eigenfunction, then $\heat_tf_\lambda=e^{-\lambda t}f_\lambda$ for all $t\ge0$.
\end{lemma}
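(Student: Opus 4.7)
The plan is to exhibit the explicit curve $t\mapsto e^{-\lambda t}f_\lambda$ as a solution of the gradient-flow problem \eqref{eq:heat_flow} with initial datum $f_\lambda$, and then to invoke the uniqueness statement for the heat flow recalled at the beginning of the Heat semigroup subsection. This reduces the claim to a one-line verification of the heat equation.

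Concretely, I would set $u(t):=e^{-\lambda t}f_\lambda$ for $t\ge 0$ and observe that, since $f_\lambda\in\dom(\Delta)\subset \leb^2(X)$, the map $t\mapsto u(t)$ from $[0,\infty)$ to $\leb^2(X)$ is smooth (being the product of a smooth scalar function by a fixed vector), hence in particular locally Lipschitz on $(0,\infty)$ and continuous at $t=0$ with $u(0)=f_\lambda$. Differentiating in $\leb^2(X)$ gives
\begin{equation*}
\frac{\mathrm d}{\mathrm dt}u(t)=-\lambda\, e^{-\lambda t}f_\lambda,
\end{equation*}
while the $1$-homogeneity of $\Delta$ together with the eigenfunction relation $\Delta f_\lambda=-\lambda f_\lambda$ yields
\begin{equation*}
\Delta u(t)=\Delta\bigl(e^{-\lambda t}f_\lambda\bigr)=e^{-\lambda t}\Delta f_\lambda=-\lambda\, e^{-\lambda t}f_\lambda.
\end{equation*}
Therefore $\frac{\mathrm d}{\mathrm dt}u(t)=\Delta u(t)$ for every $t>0$, so $u$ solves \eqref{eq:heat_flow}. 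The uniqueness of the heat flow then forces $\heat_tf_\lambda=u(t)=e^{-\lambda t}f_\lambda$ for all $t\ge0$.

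I do not foresee any genuine obstacle, since the two ingredients needed — the $1$-homogeneity of the Laplacian and the uniqueness of the locally Lipschitz solution of \eqref{eq:heat_flow} — are both already recorded in the preliminaries, and $u$ is manifestly regular enough. An essentially equivalent route would be to use the commutation \eqref{eq: Lapl-heat comm} and the $1$-homogeneity of $\heat_t$ to derive the scalar ODE $\frac{\mathrm d}{\mathrm dt}\heat_tf_\lambda=\heat_t(\Delta f_\lambda)=-\lambda\,\heat_tf_\lambda$ in $\leb^2(X)$ and integrate it, but the direct uniqueness argument above is more transparent and requires no additional commutation lemma.
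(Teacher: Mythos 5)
Your argument is correct: $u(t)=e^{-\lambda t}f_\lambda$ is indeed a locally Lipschitz (in fact smooth) $\leb^2$-valued curve with $u(t)\to f_\lambda$ as $t\to0^+$, the $1$-homogeneity of $\Delta$ gives $\Delta u(t)=e^{-\lambda t}\Delta f_\lambda=-\lambda e^{-\lambda t}f_\lambda=\tfrac{\mathrm d}{\mathrm dt}u(t)$, and the uniqueness clause in the gradient-flow characterization \eqref{eq:heat_flow} then identifies $u(t)$ with $\heat_tf_\lambda$. The paper, however, goes the other way around: it takes the \emph{unknown} curve $G(t)=\heat_tf_\lambda$, uses the commutation \eqref{eq: Lapl-heat comm} together with the linearity of $\heat_t$ to compute $G'(t)=\Delta\heat_tf_\lambda=\heat_t(\Delta f_\lambda)=-\lambda G(t)$ for a.e.\ $t>0$, and concludes that $e^{\lambda t}G(t)$ is constant and equal to $G(0)=f_\lambda$. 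This is precisely the "essentially equivalent route" you mention in your last paragraph. The trade-off is minor but real: the paper's derivation needs the commutation lemma but only elementary ODE reasoning afterwards, whereas your verification-plus-uniqueness argument avoids \eqref{eq: Lapl-heat comm} entirely and instead leans on the uniqueness statement for the gradient flow, which is arguably the cleaner reduction since the candidate solution is explicit. Both are complete proofs from ingredients already recorded in the preliminaries.
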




The \emph{Rayleigh quotient} of $f\in \dom(\ch)\setminus\{0\}$ is defined as
\begin{equation}\label{eq: Rayleigh} 
\mathcal{R}(f)=\frac{2\ch(f)}{\displaystyle\int_X |f|^2\di\m}.
\end{equation}
We consider the variational quantities
\begin{equation}
\label{eq:eig0}
\lambda_0
=
\inf
\left\{
\mathcal R(f)
:
f\in\dom(\ch)\setminus\{0\}
\right\}
\end{equation}
and 
\begin{equation}
\label{eq:eig1}
\lambda_1
=
\inf
\left\{
\mathcal R(f)
:
f\in\dom(\ch)\setminus\{0\},\ 
\int_X f\di\m=0
\right\}.    
\end{equation}
Clearly, $0\le\lambda_0\le \lambda_1$.
Moreover, if $\lambda_k$, $k=0,1$, is smaller than the infimum of the essential spectrum of $-\Delta\colon\dom(\Delta)\to\leb^2(X)$, then $\lambda_k$ corresponds to the classical $k$-th eigenvalue of $-\Delta$ in the spectral sense, see~\cite{D95}*{Th.~4.5.2}.

For the convenience of the reader, we recall the following simple result.

\begin{lemma}
\label{res:heat_lambdas_10}
Let $f\in \leb^2(X)$.
The following hold:

\begin{enumerate}[label=(\roman*),itemsep=1ex]

\item
\label{item:heat_lambda_1}
if $\m(X)<\infty$ and $\displaystyle\int_X f\di\m=0$, then $\|\heat_tf\|_{\leb^2}\le e^{-\lambda_1t}\|f\|_{\leb^2}$ for all $t\ge0$;

\item
\label{item:heat_lambda_0}
if $\m(X)=\infty$, then $\|\heat_tf\|_{\leb^2}\le e^{-\lambda_0t}\|f\|_{\leb^2}$ for all $t\ge0$.

\end{enumerate}
\end{lemma}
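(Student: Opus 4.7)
The plan is to set $\varphi(t)=\|\heat_t f\|_{\leb^2}^2$ and bound its derivative via the Rayleigh quotient, then conclude by a Grönwall-type argument.

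First I would record that, by the regularizing properties of the gradient flow of the quadratic, convex, lower semicontinuous functional $\ch$, one has $\heat_t f\in \sob^{1,2}(X)\cap\dom(\Delta)$ for all $t>0$ whenever $f\in\leb^2(X)$, together with $\frac{\mathrm d}{\mathrm dt}\heat_t f=\Delta\heat_t f$ in $\leb^2(X)$ for a.e.\ (indeed every) $t>0$. Combining this with the integration-by-parts formula~\eqref{eq:int grad-grad} applied with $f=g=\heat_t f$, which reads $-\int_X \heat_t f\,\Delta\heat_t f\di\m=2\ch(\heat_t f)$, I obtain
\begin{equation*}
\varphi'(t)
=
2\int_X \heat_tf\,\Delta \heat_tf\di\m
=
-4\ch(\heat_tf)
\quad\text{for a.e.}\ t>0.
\end{equation*}

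For part~\ref{item:heat_lambda_0}, the assumption $\m(X)=\infty$ is not actually used in the estimate itself; I would simply invoke~\eqref{eq:eig0} applied to $g=\heat_t f$ (excluding trivially the set of times where $\heat_t f=0$) to get $2\ch(\heat_tf)\ge\lambda_0\,\|\heat_tf\|_{\leb^2}^2=\lambda_0\varphi(t)$, hence $\varphi'(t)\le-2\lambda_0\varphi(t)$. Grönwall's inequality then yields $\varphi(t)\le e^{-2\lambda_0 t}\varphi(0)$, and taking square roots gives the claim.

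For part~\ref{item:heat_lambda_1}, the additional ingredient is the preservation of the vanishing mean. Since $\m(X)<\infty$, assumption~\eqref{ass: meas exp} is trivially satisfied by choosing $A=\m(X)$, so the mass-preserving property~\eqref{eq:mass_preserving} gives $\int_X\heat_tf\di\m=\int_X f\di\m=0$ for all $t\ge0$. Hence $\heat_tf$ is admissible in~\eqref{eq:eig1}, yielding $2\ch(\heat_tf)\ge\lambda_1\,\|\heat_tf\|_{\leb^2}^2$, and the same Grönwall argument as above produces $\|\heat_tf\|_{\leb^2}\le e^{-\lambda_1 t}\|f\|_{\leb^2}$.

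The only mildly delicate point is justifying the differentiation of $\varphi$ and the application of Grönwall, which requires the absolute continuity of $t\mapsto \varphi(t)$ and the validity of the identity $\varphi'(t)=-4\ch(\heat_tf)$ for a.e.\ $t>0$; this is a standard consequence of the Hilbertian gradient-flow theory (together with infinitesimal Hilbertianity, which makes $\ch$ quadratic), and is precisely the regularity property already exploited in the proof of \cref{res:heat_ibp}. Once this is granted, both statements follow immediately, and the two cases differ only in which of $\lambda_0$ or $\lambda_1$ can be used as a lower bound for the Rayleigh quotient of $\heat_tf$.
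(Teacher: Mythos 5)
Your proof is correct and follows essentially the same route as the paper's: differentiate $\|\heat_t f\|_{\leb^2}^2$, use integration by parts and the definition of $\lambda_1$ (resp.\ $\lambda_0$) as an infimum of Rayleigh quotients, invoke mass preservation to keep the zero-mean constraint, and conclude by Grönwall. Your additional remarks (that $\m(X)=\infty$ is not needed for the estimate in part (ii), and the justification of the a.e.\ differentiability of $t\mapsto\|\heat_tf\|_{\leb^2}^2$) are accurate but not points of divergence.
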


\begin{proof}
To prove~\ref{item:heat_lambda_1}, we can assume $\m(X)=1$. 
By definition of $\lambda_1$ in~\eqref{eq:eig1}, we have
\begin{equation*}
2\lambda_1\int_X |\heat_tf|^2\di\m 
\leq 
2\int_X |\slope (\heat_tf)|_{w}^2\di\m
=
-2\int_X \heat_tf\Delta(\heat_tf)\di\m
=
-\frac{\mathrm d}{\mathrm dt}\int_X |\heat_tf|^2\di\m
\end{equation*}
for all $t>0$, thanks to~\eqref{eq:int grad-grad}, \eqref{eq:mass_preserving} and~\eqref{eq:heat_flow}.
Hence~\ref{item:heat_lambda_1} follows by Gr\"onwall's Lemma.
The proof of~\ref{item:heat_lambda_0} similarly follows by exploiting the definition in~\eqref{eq:eig0}  and is thus omitted.
\end{proof}

\subsection{\texorpdfstring{$\bv$}{BV} functions and Cheeger constants} 

We say that $f\in \bv(X)=\bv(X,\d,\m)$ if $f\in\leb^1(X)$ and there exists $(f_k)_{k\in\N}\subset\lip_{bs}(X)$ such that $f_k\to f$ in $\leb^1(X)$ and 
\begin{equation*}
\sup_{k\in\N} 
\int_X |\slope f_k|\di\m  
<\infty.
\end{equation*}
We thus let 
\begin{equation}
\label{eq:var}
\var(f)
=
\inf\left\{\liminf_{k\to \infty}\int_X |\slope f_k|\di\m: f_k\in \lip_{bs}(X),\ f_k\to f \ \mathrm{in} \ \leb^1(X)\ \text{as}\ k\to\infty\right\}
\end{equation}
be the \emph{total variation of $f$}.
We write $\per(A)=\var(\chi_A)$ whenever $\chi_A\in\bv(X)$.

In analogy with~\eqref{eq:eig0} and~\eqref{eq:eig1}, we consider
\begin{equation}
\label{eq:Cheeger_0}
h_0(X)=\inf \left\{\frac{\per(A)}{\m(A)} :  A\subset X\ \text{Borel subset with}\ 0<\m(A)<\infty\right\}
\end{equation}  
and
\begin{equation}
\label{eq:Cheeger_1}
h_1(X)=\inf  \left\{\frac{\per(A)}{\m(A)} : A\subset X\ \text{Borel subset with}\ 0<\m(A)\leq \frac{\m(X)}2 \right\}. 
\end{equation}
The definition in~\eqref{eq:Cheeger_1} corresponds to the one introduced in~\cite{C69}.
We observe that, if $\m(X)<\infty$, then $h_0(X)=0$.

For future convenience, we recall the following simple estimate, proved in~\cite{DF22}*{Lem.~3.2}. 

\begin{lemma}
\label{res:norm-cheeg}
If $\m(X)<\infty$, then
\begin{equation}
\label{eq:norm-cheeg}
h_1(X)
\le
\inf\left\{ 
2\,\per(\lbrace f > 0\rbrace)\,\frac{\,\|f\|_{\leb^{\infty}}}{\|f\|_{\leb^1}}
:
f\in \leb^{\infty}(X)\ \text{such that}\ \int_X f\di\m=0
\right\}.
\end{equation}
\end{lemma}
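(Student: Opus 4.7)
The plan is to bound $h_1(X)$ by splitting $X$ via the sign of $f$ and exploiting the zero-mean condition. Fix any admissible $f\in\leb^\infty(X)$ with $\int_X f\di\m=0$; we may assume $\per(\{f>0\})<\infty$ and $\|f\|_{\leb^1}>0$, else the right-hand side is $+\infty$ and the inequality is trivial.

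Set $A_+=\{f>0\}$ and $A_-=\{f\le 0\}$, so that $A_+$ and $A_-$ partition $X$ and in particular $\per(A_+)=\per(A_-)$ (the total variation of $\chi_{A_+}=1-\chi_{A_-}$ is translation-invariant). The zero-mean assumption yields
\begin{equation*}
\int_{A_+} f\di\m = -\int_{A_-}f\di\m = \int_{A_-}|f|\di\m,
\end{equation*}
and both expressions equal $\tfrac12\|f\|_{\leb^1}$. Using the trivial bound $|f|\le\|f\|_{\leb^\infty}$ on each side, I deduce
\begin{equation*}
\tfrac12\|f\|_{\leb^1} \le \|f\|_{\leb^\infty}\,\m(A_\pm),
\quad\text{hence}\quad
\m(A_\pm)\ge \frac{\|f\|_{\leb^1}}{2\|f\|_{\leb^\infty}}.
\end{equation*}

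Next, since $\m(A_+)+\m(A_-)=\m(X)$, at least one of $A_+, A_-$ has measure $\le \m(X)/2$; call it $A_0$. Since $A_0$ is a Borel set with $0<\m(A_0)\le \m(X)/2$, it is admissible in the definition~\eqref{eq:Cheeger_1} of $h_1(X)$, whence
\begin{equation*}
h_1(X)\le \frac{\per(A_0)}{\m(A_0)} = \frac{\per(A_+)}{\m(A_0)} \le \frac{\per(A_+)}{\|f\|_{\leb^1}/(2\|f\|_{\leb^\infty})} = 2\,\per(\{f>0\})\,\frac{\|f\|_{\leb^\infty}}{\|f\|_{\leb^1}}.
\end{equation*}
Taking the infimum over all admissible $f$ yields~\eqref{eq:norm-cheeg}.

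The proof is essentially a one-step reduction, so there is no real obstacle; the only mild subtlety is ensuring that $\per(A_+)=\per(A_-)$ and that $A_-$ (rather than $\{f<0\}$) is chosen so that $A_+\sqcup A_-=X$, which is what makes the perimeter of the complementary set coincide with that of $\{f>0\}$ and lets the same numerator work regardless of which side happens to be the smaller one.
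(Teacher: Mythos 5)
Your proof is correct and is essentially the same argument as the one the paper cites for this lemma (De Ponti--Farinelli, Lem.~3.2): the zero-mean condition forces $\m(\{f>0\})$ and $\m(\{f\le 0\})$ to both be at least $\tfrac{\|f\|_{\leb^1}}{2\|f\|_{\leb^\infty}}$, and the smaller of the two is an admissible competitor for $h_1(X)$. The only point worth making explicit is the identity $\per(\{f\le 0\})=\per(\{f>0\})$, which in this metric-measure setting follows from $\var(\chi_{A^c})=\var(1-\chi_A)\le\var(1)+\var(\chi_A)$ together with $\var(1)=0$, the latter holding because $\m(X)<\infty$ allows one to approximate the constant $1$ in $\leb^1$ by $\lip_{bs}$ cut-offs with slopes of arbitrarily small $\leb^1$ norm.
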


\section{Quantitative Lipschitz smoothing property and implicit inequalities}

In this section, we study the consequences of the quantitative Lipschitz smoothing property (see \cref{def:feller} below) in their implicit form.

\label{sec:implicit bound}

\subsection{Main assumptions}
\label{subsec:ass}

From now on, we let $(X,\d,\m)$ be a metric-measure space satisfying the following properties:
\begin{enumerate}[label=(P\arabic*),ref=P\arabic*,itemsep=.5ex,leftmargin=6ex,topsep=.5ex]

\item\label{item:ass_d}
$(X,\d)$ is a complete and separable metric space;

\item\label{item:ass_m} 
$\m$ is a non-negative Borel-regular measure on $X$ satisfying $\operatorname{supp}\m=X$ and~\eqref{ass: meas exp};

\item\label{item:ass_ch}
$(X,\d,\m)$ is infinitesimally Hilbertian, i.e., \eqref{eq:inf_hilb} holds.

\end{enumerate}

Besides the above properties, we assume the validity of the following property, which is the main assumption of our paper.
Here and in the following, we let $\cc\colon(0,\infty)\to(0,\infty)$ be a Borel function.

\begin{definition}[$\leb^\infty$-to-$\lip$]
\label{def:feller}
We say that the heat semigroup $(\heat_t)_{t\ge 0}$ satisfies the \emph{$\leb^\infty$-to-$\lip$ contraction property with Lipschitz constant~$\cc$} (or is~\ref{eq:c-Feller}, for short)
if
\begin{equation}
\label{eq:c-Feller}
\tag{$\cc$-$\lip$}
f\in \leb^\infty(X)
\implies
\heat_tf\in\lip_b(X)\
\text{with}\
\lip(\heat_t f)\leq \cc(t)\,\|f\|_{\leb^{\infty}}\
\text{for all}\ 
t>0.
\end{equation}
\end{definition}

Under~\eqref{eq:c-Feller}, we can define the \emph{optimal} function $\cc^\star\colon(0,\infty)\to(0,\infty)$ by letting
\begin{equation}
\label{eq:c-star}
\cc^\star(t)
=
\sup\left\{\lip(\heat_t f) : f\in\leb^\infty(X)\ \text{with}\ \|f\|_{\leb^\infty}=1\right\}
\end{equation}
for all $t>0$.
We have the following result, which motivates the definition of $\cc^\star$ in~\eqref{eq:c-star}.

\begin{lemma}[Properties of $\cc^\star$]
\label{res:c-star}
Assume that~\eqref{eq:c-Feller} holds for some Borel $\cc\colon(0,\infty)\to(0,\infty)$ and let $\cc^\star\colon(0,\infty)\to(0,\infty)$ be as in~\eqref{eq:c-star}.
Then, the following hold:
\begin{enumerate}[label=(\roman*),itemsep=1ex]
\item
\label{item:c-star_feller}
$\cc^\star$  satisfies~\eqref{eq:c-Feller};
\item
\label{item:c-star_minimal}
$\displaystyle\cc^\star(t)\le\inf\{\cc(s): s\in(0,t]\}$ for all $t>0$;
\item 
\label{item:c-star_decres}
$\cc^\star$ is non-increasing, $\cc^\star(t)\le\cc^\star(s)$ for all $0<s\le t$;
\item
\label{item:c-star_int}
if $\cc\in\leb^1((0,t_0])$ for some $t_0>0$, then $\cc^\star(t)=O(1/t)$ as $t\to0^+$.
\end{enumerate} 
\end{lemma}

\begin{proof}
Since~\eqref{eq:c-Feller} holds for $\cc$, this means that $\heat_tf\in\lip_b(X)$ for every $t>0$ and $f\in\leb^\infty(X)$.
By the definition in~\eqref{eq:c-star} and~\eqref{eq:1-homo}, we also have that 
\begin{equation*}
\lip(\heat_tf)\le\cc^\star(t)\|f\|_{\leb^\infty}
\end{equation*}
for every $t>0$ and $f\in\leb^\infty(X)$, proving~\ref{item:c-star_feller}.
Moreover, as a consequence of the semigroup property and the maximum principle, we have
\begin{equation*}
\lip(\heat_tf)
\le 
\lip(\heat_s \heat_{t-s}f)
\le
\cc(s) 
\,
\|\heat_{t-s} f\|_{\leb^\infty}
\le
\cc(s) 
\end{equation*}
whenever $0<s\le t$ and $f \in \leb^\infty(X)$ is such that $\|f\|_{\leb^\infty}
=1$. 
By~\eqref{eq:c-star}, we hence get that 
\begin{equation*}
\cc^\star(t)
=
\sup\left\{\lip(\heat_t f) : f\in\leb^\infty(X)\ \text{with}\ \|f\|_{\leb^\infty}=1\right\}
\le
\cc(s)
\end{equation*}
for $0<s\le t$, proving~\ref{item:c-star_minimal}.
Similarly, in view of~\ref{item:c-star_feller},  we can also write
\begin{equation*}
\lip(\heat_tf)
\le 
\lip(\heat_s \heat_{t-s}f)
\le
\cc^\star(s) 
\,
\|\heat_{t-s} f\|_{\leb^\infty}
\le
\cc^\star(s)
\end{equation*}
whenever $0<s\le t$ and $f \in \leb^\infty(X)$ is such that $\|f\|_{\leb^\infty}
=1$, so that~\ref{item:c-star_decres} follows from~\eqref{eq:c-star}.
Finally, if $\cc\in\leb^1((0,t_0])$ for some $t_0>0$, then also $\cc^\star\in\leb^1((0,t_0])$ for $t_0>0$ by~\ref{item:c-star_minimal}.
Therefore, thanks to~\ref{item:c-star_decres}, we can estimate
\begin{equation*}
\int_0^{t_0}\cc^\star(s)\di s
\ge
\int_0^t\cc^\star(s)\di s
\ge 
\int_0^t
\cc^\star(t)
\di s
=
t\,\cc^\star(t)
\end{equation*}
for every $t\in(0,t_0]$, proving~\ref{item:c-star_int} and concluding the proof.
\end{proof}

By \cref{res:c-star}, it is not restrictive to assume that~\eqref{eq:c-Feller} holds for some non-increasing function $\cc\colon(0,\infty)\to(0,\infty)$. 
Moreover, by additionally requiring that $\cc\in\leb^1_{\rm loc}([0,\infty))$, it is not restrictive to also assume that $\cc(t)=O(1/t)$ as $t\to0^+$.
However, apart from the additional assumption that $\cc\in\leb^1_{\rm loc}([0,\infty))$ (see~\eqref{eq:cc_int_loc} below), we do not need these further properties of $\cc$ in our results.

\subsection{Comparison with \texorpdfstring{$\lip$-to-$\lip$}{lip-to-lip} contraction}

Before going further, it is worth comparing~\eqref{eq:c-Feller} with the following different condition, see~\cite{AGS15}*{Sec.~3.2}.

\begin{definition}[$\lip$-to-$\lip$]
We say that the heat semigroup $(\heat_t)_{t\ge0}$ satisfies the \emph{$\lip$-to-$\lip$ contraction property with constant $\cc$} if 
\begin{equation}
\label{eq:c-lip-to-lip}
f\in\leb^2(X)\cap\lip_b(X)
\implies
\heat_tf\in\lip_b(X)\
\text{with}\
\lip(\heat_tf)
\le\cc(t)\,\lip(f)
\
\text{for all}\ t>0.
\end{equation}
\end{definition} 

We have the following result, showing that~\eqref{eq:c-Feller} is actually \emph{stronger} than~\eqref{eq:c-lip-to-lip} in metric-measure spaces with finite diameter.

\begin{lemma}
\label{res:diam_ltl}
Let $d=\operatorname{diam}(X)<\infty$.
If~\eqref{eq:c-Feller} holds with constant $\cc$, then also~\eqref{eq:c-lip-to-lip} holds with constant $d\,\cc$.
\end{lemma}

\begin{proof}
If $f\in\leb^2(X)\cap\lip_b(X)$ and $x_0\in X$, then $g=f-f(x_0)\in\lip_b(X)$.
By~\eqref{eq:c-Feller}, we hence get that $\heat_tg\in\lip_b(X)$ with $\lip(\heat_tg)\le \cc(t)\|g\|_{\leb^\infty}$ for all $t>0$.
However, it is easily seen that $\lip(\heat_tg)=\lip(\heat_tf)$ for all $t>0$ and $\|g\|_{\leb^\infty}=\|f-f(x_0)\|_{\leb^\infty}\le\lip(f)\operatorname{diam}(X)=d\,\lip(f)$, immediately yielding the conclusion.
\end{proof}

In view of \cref{res:diam_ltl},
it would be interesting to investigate the existence of a metric-measure space $(X,\d,\m)$ with $\operatorname{diam}(X)=\infty$ such that~\eqref{eq:c-Feller} holds but~\eqref{eq:c-lip-to-lip} does not.

\subsection{Dual semigroup}

For any $f\in\leb^1(X)$ and $t\ge0$, we define
\begin{equation}
\label{eq:heatast}
\heat^\star_t(f\m)=(\heat_tf)\,\m
\in\meas(X).
\end{equation}
In the following result, we show that the semigroup $\heat_t^\star$ in~\eqref{eq:heatast} can be extended to finite Borel measures on $X$.
Here and in the following, we let $\mathrm{rba}(X)$ be the space of bounded, Borel regular and finitely-additive measures on $X$ (for more details, see e.g.~\cite{AB06}*{Ch.~14}). 

\begin{theorem}
\label{res:dual_heat}
If $\mu\in\meas(X)$ and  $t>0$, then there exists a unique $\heat_t^\star\mu\in\mathrm{rba}(X)$ with $|\heat_t^\star \mu|(X)\le|\mu|(X)$ such that 
\begin{equation}
\label{eq:dual_heat}
\int_X \heat_tf\di\mu
=
\int_Xf\di \heat_t^\star\mu
\quad
\text{for all}\
f\in \con_b(X).
\end{equation}
If $(X,\d)$ is locally compact, then $\heat_t^\star\mu\in\meas^{\rm ac}(X)$, with $\heat_t^\star\mu\ge0$ if $\mu\ge0$. 
\end{theorem}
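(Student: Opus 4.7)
The plan is to realise $\heat_t^\star\mu$ via the bounded linear functional
\[
\Phi_\mu\colon\con_b(X)\to\R,\qquad \Phi_\mu(f):=\int_X \heat_t f\di\mu,
\]
by invoking the Riesz--Hewitt identification $\con_b(X)^\ast\cong\mathrm{rba}(X)$ for the first part, and then, in the locally compact case, to upgrade this representation to a Borel measure absolutely continuous with respect to $\m$ via Urysohn's lemma combined with the equi-Lipschitz control granted by \eqref{eq:c-Feller}.

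For the first assertion, \eqref{eq:c-Feller} yields $\heat_t f\in\lip_b(X)\subset\con_b(X)$ for every $f\in\con_b(X)\subset\leb^\infty(X)$, so $\Phi_\mu$ is well-defined and linear. Applying the maximum principle \eqref{eq:maxprinciple} to both $f$ and $-f$ gives the $\leb^\infty$-contraction $\|\heat_t f\|_{\leb^\infty}\le\|f\|_{\leb^\infty}$, whence $|\Phi_\mu(f)|\le\|f\|_{\leb^\infty}\,|\mu|(X)$. The Hewitt--Yosida representation then produces a unique $\heat_t^\star\mu\in\mathrm{rba}(X)$ of total variation at most $|\mu|(X)$ satisfying \eqref{eq:dual_heat}. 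Positivity is immediate: if $\mu\ge 0$ and $f\ge 0$ then $\heat_t f\ge 0$ by \eqref{eq:maxprinciple}, so $\Phi_\mu(f)\ge 0$ and $\heat_t^\star\mu\ge 0$.

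Under local compactness, I reduce to $\mu\ge 0$ via Jordan decomposition. Restricting $\Phi_\mu$ to $\con_c(X)$ defines a positive Radon functional, represented by Riesz--Markov--Kakutani by a positive Radon measure $\nu$. To identify $\nu$ with the $\mathrm{rba}$-representative, I use that $\heat_t 1=1$: by \eqref{eq:mass_preserving} and the self-adjointness \eqref{eq:heat_in_L_infty}, $\int_X g\,\heat_t 1\di\m=\int_X g\di\m$ for every $g\in\leb^1(X)$, so $\heat_t 1=1$ $\m$-a.e., which is promoted to the everywhere identity via the Lipschitz representative from \eqref{eq:c-Feller} and $\operatorname{supp}\m=X$. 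Combined with an exhaustion $(f_j)\subset\con_c(X)$, $0\le f_j\nearrow 1$, this yields $\nu(X)=\lim_j\Phi_\mu(f_j)=\Phi_\mu(1)=\mu(X)=\heat_t^\star\mu(X)$, so the matching of total masses forces $\nu=\heat_t^\star\mu$ and delivers countable additivity.

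For $|\heat_t^\star\mu|\ll\m$, again take $\mu\ge 0$ and fix a closed $F\subset X$ with $\m(F)=0$. Outer regularity of $\m$ and Urysohn's lemma produce $f_n\in\con_b(X)$ with $\chi_F\le f_n\le\chi_{U_n}$ for open sets $U_n\supset F$ of $\m$-measure tending to zero, so $\|f_n\|_{\leb^1}\to 0$ and $\|f_n\|_{\leb^\infty}\le 1$. The $\leb^1$-contraction \eqref{eq:contraction} gives $\|\heat_t f_n\|_{\leb^1}\to 0$, while \eqref{eq:c-Feller} provides $\lip(\heat_t f_n)\le\cc(t)$ and $\|\heat_t f_n\|_{\leb^\infty}\le 1$. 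The equi-Lipschitz estimate, together with $\operatorname{supp}\m=X$ and the local finiteness of $\m$, promotes $\leb^1$-vanishing to locally uniform convergence $\heat_t f_n\to 0$; tightness of $\mu$ (Ulam's theorem on the Polish space $(X,\d)$) then yields $\int_X\heat_t f_n\di\mu\to 0$, hence $\heat_t^\star\mu(F)\le\Phi_\mu(f_n)\to 0$. Inner regularity of the Radon measure $\heat_t^\star\mu$ extends the conclusion to all Borel $\m$-null sets. The most delicate step is precisely this last promotion: converting $\leb^1$-smallness of $\heat_t f_n$ into locally uniform smallness through the equi-Lipschitz control requires a uniform positive lower bound for $\m(B_r(\cdot))$ on compact subsets, which is what ties the argument to the standing assumptions $\operatorname{supp}\m=X$ and local finiteness of $\m$.
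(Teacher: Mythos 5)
Your proof is correct and follows the same overall strategy as the paper's: represent the bounded functional $f\mapsto\int_X\heat_tf\di\mu$ on $\con_b(X)$ by an element of $\mathrm{rba}(X)$, pass to $\con_c(X)$ in the locally compact case, and deduce $|\heat_t^\star\mu|\ll\m$ by testing against continuous approximations of the indicator of a null set, using the equi-Lipschitz bound from \eqref{eq:c-Feller} to upgrade $\leb^1$-smallness to local uniform smallness. The differences are in execution rather than in route. For absolute continuity, the paper works with compact null sets $K$ and the explicit cutoffs $[1-k\,\d(x,K)]^+$, compressing the key step into ``dominated convergence twice,'' whereas you use closed null sets, Urysohn functions and outer regularity of $\m$; your explicit argument via the lower semicontinuity and positivity of $x\mapsto\m(B_r(x))$ on compacts is precisely what the paper leaves implicit, and correctly identifies $\operatorname{supp}\m=X$ as the ingredient that makes it work. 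The only substantive divergence is the proof of countable additivity: the paper cites the representation theorems of Aliprantis--Border directly, while you identify the Radon measure given by Riesz--Markov on $\con_c(X)$ with the rba representative by matching total masses through $\heat_t1=1$ and an exhaustion. This is a legitimate (and more self-contained) alternative, but note that it uses two extra ingredients the paper's route avoids: $\sigma$-compactness of $X$ (harmless, since a locally compact separable metric space is $\sigma$-compact) and the mass-preservation property \eqref{eq:mass_preserving}, hence the volume-growth hypothesis \eqref{ass: meas exp}; as the latter is a standing assumption this causes no gap, but the statement itself does not need it. Finally, the mass-matching step ``equal totals force $\nu=\heat_t^\star\mu$'' deserves a word of justification (regularity of both set functions plus inner regularity of $\nu$ by compacts), though it is standard.
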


\begin{proof}
Let $t>0$ be fixed. 
Thanks to~\eqref{eq:c-Feller}, the map $\mathcal F\colon\con_b(X)\to\R$ given by
\begin{equation*}
\mathcal F(f)
=
\int_X \heat_tf\di\mu,
\quad
\text{for}\ f\in \con_b(X),
\end{equation*}
defines a linear and continuous functional on $\con_b(X)$ such that, by~\eqref{eq:maxprinciple}, 
\begin{equation*}
|\mathcal F(f)|
\le 
\|\heat_tf\|_{\leb^\infty}\,|\mu|(X)
\le 
\|f\|_{\leb^\infty}\,|\mu|(X)
\quad
\text{for}\ f\in \con_b(X).
\end{equation*} 
By~\cite{AB06}*{Th.~14.10}, we hence get that $\heat_t^\star\mu\in\mathrm{rba}(X)$.
\textit{A fortiori}, the restriction of $\mathcal F$ to $\con_c(X)$, the space of continuous functions with compact support, is a linear continuous operator on $\con_c(X)$.
If $(X,\d)$ is locally compact, then, by~\cite{AB06}*{Ths.~14.12 and~14.14}, we get that $\heat_t\mu\in\meas(X)$ with $\heat_t\mu\ge0$ if $\mu\ge0$.
To conclude, we just need to prove that $|\heat_t\mu|\ll\m$.
Let $K\subset X$ be a compact set such that $\m(K)=0$. 
We can find a sequence $(f_k)_{k\in\N}\subset\con_c(X)$, $f_k(x)=[1-k\,\d(x,K)]^+$, $x\in X$, such that $\chi_K\le f_k\le\chi_H$ for $k\in\N$, where $H=\{x\in X:\d(x,K)\le1\}$, and $f_k(x)\to\chi_K(x)$ for all $x\in X$ as $k\to\infty$. 
Since also $f_k\to\chi_K$ in $\leb^2(X)$ as $k\to\infty$, we can apply the Dominated Convergence Theorem twice to infer that
\begin{equation*}
\heat_t^\star\mu(K)
=
\int_K\di\heat_t^\star\mu
=
\lim_{k\to\infty}
\int_X f_k \di\heat_t^\star\mu
=
\lim_{k\to\infty}
\int_X \heat_t f_k \di\mu
=
\int_X \heat_t \chi_K \di\mu
=0.
\end{equation*}
By inner regularity, we thus get $\heat_t^\star\mu(A)=0$ on any Borel set $A\subset X$ with $\m(A)=0$.
\end{proof}

As suggested by the anonymous referee, \cref{res:dual_heat} can be improved for non-negative measures avoiding the assumption on the local compactness as stated in \cref{res:referee_bl} below.
The proof of this result is based on the (dual formulation) of the inequality
\begin{equation}
\label{eq:ltbl}
\|\heat_tf\|_{\bl}
\le 
\tcc(t)\,\|f\|_{\leb^\infty},
\end{equation} 
valid for all $f\in\leb^\infty(X)$ and $t>0$, where $\tcc(t)=\max\{\cc(t),1\}$, which is an immediate consequence of~\eqref{eq:c-Feller}.

\begin{theorem}
\label{res:referee_bl}
If $f_0,f_1\in\leb^1(X)$ with $f_0,f_1\ge0$, then 
\begin{equation}
\label{eq:referee_bl_ac}
\|\heat_t(f_0-f_1)\|_{\leb^1}
\le 
\tcc(t)
\,
\bl^\star(f_0\m,f_1\m)
\quad
\text{for all}\ t>0.
\end{equation}
As a consequence, if $\mu_0,\mu_1\in\meas_+(X)$, then
\begin{equation}
\label{eq:referee_bl}
|\heat_t^\star(\mu_0-\mu_1)|(X)
\le
\tcc(t)
\,
\bl^\star(\mu_0,\mu_1)
\quad
\text{for all}\ t>0.
\end{equation} 
Moreover, if $\mu\in\meas_+(X)$ and $t>0$, then $\heat_t^\star\mu\in\meas^{\rm ac}_+(X)$ with $\heat_t^\star\mu(X)\le \mu(X)$ and 
\begin{equation}
\label{eq:dual_heat_bl}
\int_Xf\di\heat_t^\star\mu
=
\int_X\heat_tf\di\mu
\quad
\text{for all}\ f\in\con_b(X).
\end{equation}
\end{theorem}

\begin{proof}
Given $f_0,f_1\in\leb^1(X)$ with $f_0,f_1\ge0$, by~\eqref{eq:heat_in_L_infty} and~\eqref{eq:ltbl} we can estimate 
\begin{equation*}
\int_Xg\,\heat_t(f_0-f_1)\di\m
=
\int_X\heat_tg\,(f_0-f_1)\di\m
\le 
\|\heat_tg\|_{\bl}
\,\bl^\star(f_0\m,f_1\m)
\le 
\tcc(t)
\,\bl^\star(f_0\m,f_1\m)
\end{equation*}
for all $g\in\leb^\infty(X)$ such that $\|g\|_{\leb^\infty}\le1$ and $t>0$, yielding~\eqref{eq:referee_bl_ac}.
Therefore, for each $t>0$, the map $\heat_t^\star$ is $\tcc(t)$-Lipschitz from  $\meas^{\rm ac}_+(X)$ endowed with the $\bl^\star$ distance to $\meas_+(X)$ endowed with the total-variation distance.
Since the latter metric space is complete, and owing to \cref{res:density_ac}, $\heat_t^\star$ can be extended to a $\tcc(t)$-Lipschitz map (for which we keep the same noation) from  $\meas_+(X)$ endowed with the $\bl^\star$ distance to $\meas_+(X)$ endowed with the total-variation distance, proving~\eqref{eq:referee_bl}.
To conclude, it is enough to observe that, given $\mu\in\meas_+(X)$ and $t>0$, the measure $\heat_t^\star\mu\in\meas_+(X)$ uniquely defined as above must coincide with the unique element of $\mathrm{rba}(X)$ given by \cref{res:dual_heat}.
\end{proof}

By \cref{res:referee_bl}, for each $x\in X$ there exists a non-negative $\mathsf{h}_t[x]\in\leb^1(X)$ such that
\begin{equation}
\label{eq:density_h}
\heat_t^\star\delta_x=\mathsf h_t[x]\,\m,
\quad
\text{for all}\
t>0.
\end{equation}
Therefore, according to~\eqref{eq:dual_heat_bl}, if $f\in\con_b(X)$, then
\begin{equation}
\label{eq:new_heat_cb}
\heat_tf(x)
=
\int_X f\,\mathsf h_t[x]\di\m
\quad
\text{for all}\
t>0.
\end{equation}

The following result collects the basic properties of the density $\mathsf h_t[\,\cdot\,]$.
Its proof is very similar to that of~\cite{G22}*{Lem.~3.24} and is thus omitted.

\begin{corollary}
\label{res:density_heat}
Let $t>0$.
The following hold:
\begin{enumerate}[label=(\roman*),itemsep=.5ex,topsep=.5ex,leftmargin=4ex]

\item
\label{item:density} 
$\heat_s(\mathsf h_t[x])=\mathsf h_{s+t}[x]$ $\m$-a.e.\ in $X$, for each $x\in X$ and $s\ge0$;

\item
$\mathsf h_t[x](y)=\mathsf h_t[y](x)$ for $\m$-a.e.\ $x,y\in X$.

\end{enumerate}
\end{corollary}

As a simple application of \cref{res:density_heat}, we obtain the following result.

\begin{corollary}
\label{res:new_heat_bdd}
Formula~\eqref{eq:new_heat_cb} holds for every $f\in\leb^\infty(X)$.
\end{corollary}

\begin{proof}
Given $\eps>0$ and $f\in\leb^\infty(X)$, by~\eqref{eq:c-Feller} we get that $\heat_\eps f\in\lip_b(X)$.
We also note that, given $t>0$, again by~\eqref{eq:c-Feller}, the maps $(\heat_{t+\eps}f)_{\eps>0}$ are equi-bounded and equi-Lipschitz in~$X$, and thus $\heat_{t+\eps}f(x)\to\heat_tf(x)$ for every $x\in X$ as $\eps\to0^+$.
Therefore, since $\heat_\eps f\to f$ weakly$^\star$ in $\leb^\infty(X)$ as $\eps\to0^+$, from \cref{res:density_heat}\ref{item:density} and~\eqref{eq:new_heat_cb} we get that 
\begin{equation*}
\int_X f\,\mathsf h_t[x]\di\m
=
\lim_{\eps\to0^+}
\int_X\heat_\eps f\,\mathsf h_t[x]\di\m
=
\lim_{\eps\to0^+}
\heat_t(\heat_\eps f)(x)
=
\lim_{\eps\to0^+}
\heat_{t+\eps} f(x)
=
\heat_t f(x)
\end{equation*}
for every $x\in X$, concluding the proof.
\end{proof}

\begin{remark}
\cref{res:dual_heat,res:density_heat,res:referee_bl,res:new_heat_bdd} have been obtained under the $\lip$-to-$\lip$ contractivity property~\eqref{eq:c-lip-to-lip}, see~\cite{AGS15}*{Prop.~3.2} and~\cite{G22}*{Prop.~3.13}.
We recall that, by \cref{res:diam_ltl}, property~\eqref{eq:c-lip-to-lip} is weaker than~\eqref{eq:c-Feller} in bounded metric spaces.
\end{remark}

\subsection{\texorpdfstring{$\wa_1$-$\leb^1$}{W1-L1} regularization}

In the following result, we provide a comparison between $\leb^1$ and $\wa_1$ distances of non-negative functions.

\begin{theorem}
\label{th:W1-He1}
If $f_0,f_1\in \leb^1(X)$ with $f_0,f_1\ge0$, then
\begin{equation}
\label{eq:W1-He1}
\|\heat_t(f_0-f_1)\|_{\leb^1}
\le 
\cc(t)
\,
\wa_1(f_0\m,f_1\m)
\quad
\text{for all}\ t>0.
\end{equation}
In addition, if $\mu_0,\mu_1\in\mathscr{P}_1(X)$, then
\begin{equation}
\label{eq:W1-totvar}
|\heat^\star_t(\mu_0-\mu_1)|(X)
\le
\cc(t) \,
\wa_1(\mu_0,\mu_1)
\quad
\text{for all}\ t>0,
\end{equation}
and so, as a consequence, 
\begin{equation}
\label{eq:W1-density_h}
\|\mathsf h_t[x]-\mathsf h_t[y]\|_{\leb^1}
\le 
\cc(t)\,\d(x,y)
\quad
\text{for all}\ x,y\in X,\ t>0.
\end{equation}
\end{theorem}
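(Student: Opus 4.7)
All three bounds fit into a single scheme: express the left-hand side via a suitable duality pairing with bounded (resp.\ bounded continuous) test functions, use the self-adjointness of the heat semigroup to move $\heat_t$ from the measure onto the test function, apply~\ref{eq:c-Feller} to convert the resulting test function into a $\cc(t)$-Lipschitz one, and conclude via the Kantorovich--Rubinstein representation \eqref{eq:dual_W1} of $\wa_1$.

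Concretely, for \eqref{eq:W1-He1} I would start from the $\leb^{1}$--$\leb^{\infty}$ duality
\[
\|\heat_t(f_0-f_1)\|_{\leb^1}
=
\sup_{\substack{g\in\leb^\infty(X)\\ \|g\|_{\leb^\infty}\le 1}}
\int_X g\,\heat_t(f_0-f_1)\di\m,
\]
well-posed since $\heat_t(f_0-f_1)\in\leb^1(X)$ by the $\leb^1$-extension of \eqref{eq:contraction}. For each such $g$, relation \eqref{eq:heat_in_L_infty} rewrites the integral as $\int_X\heat_t g\di(f_0\m-f_1\m)$. By~\ref{eq:c-Feller}, the function $\heat_t g/\cc(t)$ lies in $\lip_b(X)$ with Lipschitz constant at most~$1$, so it is an admissible competitor in \eqref{eq:dual_W1}, yielding
\[
\int_X \heat_t g\di(f_0\m-f_1\m)\le \cc(t)\,\wa_1(f_0\m,f_1\m).
\]
Taking the supremum over $g$ gives \eqref{eq:W1-He1}; the case $\wa_1(f_0\m,f_1\m)=\infty$ is trivial.

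For \eqref{eq:W1-totvar}, essentially the same argument works under two cosmetic changes dictated by the measure-theoretic setting: the opening duality is now the definition of the total variation of a Borel measure, namely the supremum over $\varphi\in\con_b(X)$ with $\|\varphi\|_{\leb^\infty}\le 1$; and the transposition of $\heat_t$ and $\heat_t^\star$ is precisely the defining identity \eqref{eq:dual_heat}. The bound $\int_X \heat_t\varphi\di(\mu_0-\mu_1)\le\cc(t)\,\wa_1(\mu_0,\mu_1)$ then follows by applying \eqref{eq:dual_W1} to $\heat_t\varphi/\cc(t)\in\lip_b(X)$. Finally, \eqref{eq:W1-density_h} is a direct specialization of \eqref{eq:W1-totvar} to $\mu_0=\delta_x$, $\mu_1=\delta_y\in\mathscr{P}_1(X)$: the Kantorovich--Rubinstein formula gives $\wa_1(\delta_x,\delta_y)=\d(x,y)$, while \eqref{eq:density_h} yields $\heat_t^\star(\delta_x-\delta_y)=(\mathsf h_t[x]-\mathsf h_t[y])\,\m$, so that the left-hand side of \eqref{eq:W1-totvar} coincides with $\|\mathsf h_t[x]-\mathsf h_t[y]\|_{\leb^1}$.

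I do not foresee substantial obstacles. The single point to handle with care at each stage is the \emph{admissibility} of the transformed test function in the Kantorovich--Rubinstein supremum: one needs a \emph{bounded} $\cc(t)$-Lipschitz function, not merely a Lipschitz one, and this is exactly what the $\lip_b$ (as opposed to $\lip$) conclusion of~\ref{eq:c-Feller} provides.
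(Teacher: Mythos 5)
Your argument for \eqref{eq:W1-He1} is exactly the paper's: pair $\heat_t(f_0-f_1)$ with $g\in\leb^\infty(X)$, $\|g\|_{\leb^\infty}\le1$, transpose via \eqref{eq:heat_in_L_infty}, and feed $\heat_tg$ (normalized by $\cc(t)>0$) into \eqref{eq:dual_W1}; the same is true for the deduction of \eqref{eq:W1-density_h} from \eqref{eq:W1-totvar}. Where you genuinely diverge is \eqref{eq:W1-totvar}: you test $|\heat_t^\star(\mu_0-\mu_1)|(X)$ directly against $\varphi\in\con_b(X)$ with $\|\varphi\|_{\leb^\infty}\le1$, use the defining identity \eqref{eq:dual_heat} of \cref{res:dual_heat} applied to the signed measure $\mu_0-\mu_1$, and then invoke \eqref{eq:dual_W1} for $\heat_t\varphi\in\lip_b(X)$. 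The paper instead approximates $\mu_0,\mu_1$ in $\wa_1$ by absolutely continuous probability measures $f_i^k\m$ (Villani, Th.~6.18), applies the already-proved \eqref{eq:W1-He1} to the densities, and passes to the limit using that $\heat_t\varphi\in\con_b(X)$ and that $\wa_1$-convergence implies weak convergence. Your route is shorter and avoids the limiting step entirely, since \cref{res:dual_heat} already furnishes \eqref{eq:dual_heat} for arbitrary finite Borel measures; the only points to keep explicit are the linearity of $\mu\mapsto\heat_t^\star\mu$ (immediate from the uniqueness in \cref{res:dual_heat}) and the fact that, because $\operatorname{supp}\m=X$, the Lipschitz representative of $\heat_t\varphi$ used in \eqref{eq:dual_W1} is the same continuous representative appearing in \eqref{eq:dual_heat}. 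Both arguments are correct, and your closing remark about needing the $\lip_b$ (not merely $\lip$) conclusion of~\ref{eq:c-Feller} is exactly the right admissibility check.
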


\begin{proof}
Let $t>0$ be fixed.
Given $g\in \leb^\infty(X)$, by~\eqref{eq:heat_in_L_infty}, \eqref{eq:dual_W1} and~\eqref{eq:c-Feller}, 
we can estimate 
\begin{equation*}
\begin{split}
\int_X g\,\heat_t(f_0-f_1)\di\m
&=
\int_X \heat_tg\di(f_0\m-f_1\m)
\le 
\lip(\heat_tg)\,\wa_1(f_0\m,f_1\m)
\\
&\le 
\cc(t)\,\|g\|_{\leb^\infty}
\,\wa_1(f_0\m,f_1\m),
\end{split}
\end{equation*}
readily yielding~\eqref{eq:W1-He1}.
To prove~\eqref{eq:W1-totvar}, we argue as follows.
By~\cite{V09}*{Th.~6.18}, we can find $\mu_0^k=f_0^k\m$ and $\mu_1^k=f_1^k\m$ in $\mathscr P_1(X)$, with $k\in\N$, such that
\begin{equation}
\label{eq:conv_wa1}
\lim_{k\to\infty}
\wa_1(\mu_0^k,\mu_0)
=
\lim_{k\to\infty}
\wa_1(\mu_1^k,\mu_1)
=
0.
\end{equation}
By~\eqref{eq:W1-He1}, we know that
\begin{equation*}
\|\heat_t(f_0^k-f_1^k)\|_{\leb^1}
\le 
\cc(t)
\,
\wa_1(\mu_0^k,\mu_1^k)
\quad
\text{for all}\ 
k\in\N.
\end{equation*}
Given $g\in\con_b(X)$ with $\|g\|_{\leb^\infty}\le1$, by~\eqref{eq:heat_in_L_infty} and~\eqref{eq:dual_heat} we can estimate
\begin{equation*}
\|\heat_t(f_0^k-f_1^k)\|_{\leb^1}
\ge 
\int_X  g\,\heat_t(f_0^k-f_1^k)\di\m
=
\int_X \heat_t g\di(\mu_0^k-\mu_1^k),
\end{equation*}
so that 
\begin{equation*}
\int_X \heat_t g\di(\mu_0^k-\mu_1^k)
\le 
\cc(t)
\,
\wa_1(\mu_0^k,\mu_1^k)
\quad
\text{for all}\ 
k\in\N
\end{equation*}
whenever $g\in\con_b(X)$ with $\|g\|_{\leb^\infty}\le1$.
Thanks to~\eqref{eq:c-Feller}, $\heat_t g\in\con_b(X)$.
Thus, recalling~\cite{V09}*{Th.~6.9},  we can exploit~\eqref{eq:conv_wa1} to pass to the limit as $k\to\infty$ and get that
\begin{equation*}
\int_X \heat_t g\di(\mu_0-\mu_1)
\le 
\cc(t)
\,
\wa_1(\mu_0,\mu_1)
\end{equation*}
whenever $g\in\con_b(X)$ with $\|g\|_{\leb^\infty}\le1$.
Recalling the definition in~\eqref{eq:dual_heat}, we get that
\begin{equation*}
\int_X  g\di\heat_t^\star(\mu_0-\mu_1)
\le 
\cc(t)
\,
\wa_1(\mu_0,\mu_1)   
\end{equation*}
whenever $g\in\con_b(X)$ with $\|g\|_{\leb^\infty}\le1$, readily yielding~\eqref{eq:W1-totvar}.
The validity of~\eqref{eq:W1-density_h} hence easily follows by recalling the definition of $\mathsf h_t[\,\cdot\,]$ in~\eqref{eq:density_h} and applying~\eqref{eq:W1-totvar} to $\mu_0=\delta_x$ and $\mu_1=\delta_y$, $x,y\in X$, completing the proof.
\end{proof}

\begin{remark}
In $\RCD(K,\infty)$ spaces, \cref{th:W1-He1} has been proved in~\cite{AGS14}*{Cor.~6.6}.
It is worth observing that~\eqref{eq:W1-totvar} can be equivalently rephrased in terms of the \emph{$1$-Matusita--Hellinger distance}.
We refer to~\cite{LuSa21}*{Th.~5.2.} and to~\cite{DF22} for similar inequalities.
\end{remark}

\subsection{Quantitative \texorpdfstring{$\leb^2$}{L2} contraction estimate}

From now on, in all the subsequent results, we additionally assume that 
\begin{equation}
\label{eq:cc_int_loc}
\cc\in \leb^1_{\rm loc}([0,+\infty))
\end{equation}
and we define $\CC\colon[0,\infty)\to [0,\infty)$ by letting 
\begin{equation}\label{eq:C(t)}
\CC(t)=\int_0^t \cc(s)\di s
\quad
\text{for all}\
t\ge0.
\end{equation}
We warn the reader that~\eqref{eq:cc_int_loc} holds in the settings considered in \cref{sec:explicit bound,sec:examples}.

The following result, generalizing~\cite{BaLe96}*{Th.~4.1}, provides a quantification of the $\leb^2$ contraction property~\eqref{eq:contraction} of the heat semigroup on sufficiently smooth functions.

\begin{theorem}
\label{res:quant_contraction}
If $f\in\sob^{1,2}(X)\cap \leb^\infty(X)$ is such that $|\slope f|_w\in\leb^1(X)$, then 
\begin{equation*}
\|f\|_{\leb^2}^2
-
\|\heat_{t/2}f\|_{\leb^2}^2
\le 
\CC(t)\,\|f\|_{\leb^\infty}\||\slope f|_w\|_{\leb^1}
\quad  
\text{for all}\ t\ge0.
\end{equation*}
\end{theorem}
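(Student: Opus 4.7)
The plan is to combine self-adjointness of $\heat_t$ with \cref{res:heat_ibp} and the $(\cc$-$\lip)$ property; the whole argument then reduces to a three-line computation. First, since $\heat_t$ is self-adjoint and satisfies the semigroup property, applying~\eqref{eq: heat self} with $g=\heat_{t/2}f$ gives
\begin{equation*}
\|\heat_{t/2}f\|_{\leb^2}^2
=
\int_X(\heat_{t/2}f)^2\di\m
=
\int_X f\,\heat_{t/2}(\heat_{t/2}f)\di\m
=
\int_X f\,\heat_t f\di\m.
\end{equation*}
Hence the left-hand side of the claimed estimate equals $\int_X f(f-\heat_tf)\di\m$. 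Since $f\in\sob^{1,2}(X)$, we may apply \cref{res:heat_ibp} with $g=f$, which converts this difference into the space-time integral
\begin{equation*}
\|f\|_{\leb^2}^2-\|\heat_{t/2}f\|_{\leb^2}^2
=
\int_0^t\int_X \slope f\cdot\slope\heat_sf\di\m\di s.
\end{equation*}

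Next, I would estimate the integrand pointwise. Using the Cauchy--Schwarz-type bound for the bilinear form $\int\slope f\cdot\slope g\di\m\le\int|\slope f|_w|\slope g|_w\di\m$ recalled in \cref{sec:prel}, and the fact that the $(\cc$-$\lip)$ hypothesis gives $\lip(\heat_sf)\le\cc(s)\|f\|_{\leb^\infty}$, so that $|\slope\heat_sf|_w\le\cc(s)\|f\|_{\leb^\infty}$ $\m$-a.e.\ in $X$, one obtains
\begin{equation*}
\int_X\slope f\cdot\slope\heat_sf\di\m
\le
\cc(s)\,\|f\|_{\leb^\infty}\int_X|\slope f|_w\di\m
=
\cc(s)\,\|f\|_{\leb^\infty}\||\slope f|_w\|_{\leb^1}.
\end{equation*}
Integrating in $s\in(0,t)$ and recalling the definition~\eqref{eq:C(t)} of $\CC(t)$, together with~\eqref{eq:cc_int_loc} to justify Fubini, gives the claim.

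The only delicate point in this plan is passing from the global Lipschitz bound $\lip(\heat_sf)\le\cc(s)\|f\|_{\leb^\infty}$ to the pointwise a.e.\ bound on the \emph{minimal relaxed} gradient $|\slope\heat_sf|_w$: the paper records $|\slope h|_w\le|\slope h|$ only for $h\in\lip_{bs}(X)$, while $\heat_sf$ lies in $\lip_b(X)\cap\leb^2(X)$. One circumvents this by truncating $\heat_sf$ with a sequence of bounded Lipschitz cut-offs with unit slope supported on concentric balls, noting that the slope of the truncations inherits the bound $\cc(s)\|f\|_{\leb^\infty}$ outside a null set, and then passing to the limit in the definition of relaxed gradient using $\heat_sf\in\leb^2(X)$ and the growth condition~\eqref{ass: meas exp}. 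I expect this verification to be the main technical obstacle, though it is ultimately standard and does not interact with the curvature-free flavor of the argument.
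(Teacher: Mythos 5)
Your proposal is correct and follows essentially the same route as the paper: take $g=f$ in \cref{res:heat_ibp}, identify $\int_X f(f-\heat_tf)\di\m$ with $\|f\|_{\leb^2}^2-\|\heat_{t/2}f\|_{\leb^2}^2$ via self-adjointness and the semigroup property, and bound the bilinear form using $|\slope\heat_sf|_w\le\lip(\heat_sf)\le\cc(s)\|f\|_{\leb^\infty}$. The technical point you flag (extending $|\slope h|_w\le|\slope h|$ from $\lip_{bs}$ to $\lip_b\cap\leb^2$) is real but standard, and the paper uses the same inequality implicitly (and explicitly in the proof of \cref{res:L1-grad}) without further comment.
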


\begin{proof}
Taking $g=f$ in \cref{res:heat_ibp} and using~\eqref{eq:c-Feller}, we can estimate
\begin{equation*}
\begin{split}
\int_X f(f-\heat_tf)\di\m
&= 
\int_0^t\int_X\slope f\cdot\slope\heat_sf\di\m\di s
\le 
\int_0^t|\slope f|_w\,\lip(\heat_s f)\di\m\di s
\\
&\le 
\int_0^t|\slope f|_w\,\cc(s)\,\|f\|_{\leb^\infty}\di\m\di s
=
\CC(t)\,\|f\|_{\leb^\infty}\,\int_X|\slope f|_w\di\m
\end{split}
\end{equation*}
and the conclusion follows by the symmetry and the semigroup property of $(\heat_t)_{t\ge0}$.
\end{proof}

As~\cite{BaLe96}*{Th.~4.1}, \cref{res:quant_contraction} can be refined provided that the heat semigroup $(\heat_t)_{t\ge0}$ is \emph{$\theta$-ultracontractive}, i.e., for some Borel function $\theta\colon(0,\infty)\to(0,\infty)$, it holds that
\begin{equation}
\label{eq:ultrac}
\|\heat_t f\|_{\leb^\infty}
\le
\theta(t)\,
\|f\|_{\leb^1}
\quad
\text{for all}\ t>0.
\end{equation}
Precisely, we get the following interpolation inequality for bounded $\bv$ functions.

\begin{corollary}
Under~\eqref{eq:ultrac}, if $f\in\bv(X)\cap\leb^\infty(X)$, then
\begin{equation}
\label{eq:isostrana}
\|f\|_{\leb^2}^2
\le 
\inf_{t>0}
\left(
\theta(\tfrac t2)\,\|f\|_{\leb^1}^2
+
\CC(t)\,\|f\|_{\leb^\infty}\var(f)
\right).
\end{equation}
\end{corollary}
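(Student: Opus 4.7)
My plan is to combine the quantitative $\leb^2$ contraction estimate from \cref{res:quant_contraction} with the ultracontractivity assumption \eqref{eq:ultrac} via the splitting
\[
\|f\|_{\leb^2}^2
=
\bigl(\|f\|_{\leb^2}^2-\|\heat_{t/2}f\|_{\leb^2}^2\bigr)+\|\heat_{t/2}f\|_{\leb^2}^2
\]
for each fixed $t>0$. The second summand is the easy one: by the elementary interpolation $\|u\|_{\leb^2}^2\le\|u\|_{\leb^1}\|u\|_{\leb^\infty}$ combined with the $\leb^1$ contraction property~\eqref{eq:contraction} and the ultracontractive bound~\eqref{eq:ultrac}, I immediately obtain
\[
\|\heat_{t/2}f\|_{\leb^2}^2
\le
\|\heat_{t/2}f\|_{\leb^1}\,\|\heat_{t/2}f\|_{\leb^\infty}
\le
\theta(t/2)\,\|f\|_{\leb^1}^2.
\]

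For the first summand, \cref{res:quant_contraction} delivers exactly the desired bound $\CC(t)\,\|f\|_{\leb^\infty}\,\||\slope f|_w\|_{\leb^1}$, but only for $f\in\sob^{1,2}(X)\cap\leb^\infty(X)$ with $|\slope f|_w\in\leb^1(X)$, so I still need to reach the $\bv\cap\leb^\infty$ case by approximation. By the definition of $\var(f)$ in~\eqref{eq:var}, pick $(f_k)\subset\lip_{bs}(X)$ with $f_k\to f$ in $\leb^1(X)$ and $\int_X|\slope f_k|\di\m\to\var(f)$. To preserve the $\leb^\infty$ bound along the sequence, I truncate by setting $g_k=\tau_M\circ f_k$ with $M=\|f\|_{\leb^\infty}$ and $\tau_M(s)=\max\{-M,\min\{M,s\}\}$. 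Since $\tau_M$ is $1$-Lipschitz with $\tau_M(0)=0$, the $g_k$ remain in $\lip_{bs}(X)$ with $\|g_k\|_{\leb^\infty}\le M$ and $|\slope g_k|\le|\slope f_k|$; moreover $g_k\to f$ in $\leb^1(X)$ and, via the bound $|g_k-f|^2\le 2M|g_k-f|$, also in $\leb^2(X)$.

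Applying \cref{res:quant_contraction} to each $g_k$ and combining with the ultracontractivity estimate above yields
\[
\|g_k\|_{\leb^2}^2
\le
\theta(t/2)\,\|g_k\|_{\leb^1}^2+\CC(t)\,\|f\|_{\leb^\infty}\int_X|\slope f_k|\di\m,
\]
and passing to the limit $k\to\infty$ (both $\leb^1$ and $\leb^2$ norms converge to those of $f$, and the slope integrals tend to $\var(f)$ by the choice of the sequence) gives the claim at each fixed $t$; taking the infimum over $t>0$ concludes. The only mildly delicate point, and the main thing to verify carefully, is that the truncation does not spoil the approximate total variation, i.e.\ that the sequence $(g_k)$ still witnesses $\var(f)$ in the limit. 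This reduces to the chain-rule inequality $|\slope(\tau_M\circ f_k)|\le|\slope f_k|$ together with the identity $\tau_M\circ f=f$, which via the lower semicontinuity of $\var$ squeeze $\lim_k\int_X|\slope g_k|\di\m=\var(f)$.
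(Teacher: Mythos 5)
Your proposal is correct and follows essentially the same route as the paper: the same splitting of $\|f\|_{\leb^2}^2$, the same interpolation--contraction--ultracontractivity bound for $\|\heat_{t/2}f\|_{\leb^2}^2$, an application of \cref{res:quant_contraction}, and the same approximation of a $\bv\cap\leb^\infty$ function by truncated $\lip_{bs}$ functions. The only difference is that you spell out the truncation step in detail (and note in passing that the final ``delicate point'' is actually harmless, since only the upper bound $\int_X|\slope g_k|\di\m\le\int_X|\slope f_k|\di\m$ is needed), whereas the paper states it in one line.
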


\begin{proof}
We begin by observing that, by~\eqref{eq:contraction} and~\eqref{eq:ultrac}, 
\begin{equation*}
\|\heat_{t/2}
f\|_{\leb^2}^2
\le 
\|\heat_{t/2}
f\|_{\leb^1}
\,
\|\heat_{t/2}
f\|_{\leb^\infty}
\le 
\theta(\tfrac t2)\,\|f\|_{\leb^1}^2.
\end{equation*}
Owing to \cref{res:quant_contraction}, we hence plainly get that 
\begin{equation}
\label{eq:volpe}
\|f\|_{\leb^2}^2
\le 
\theta(\tfrac t2)\,\|f\|_{\leb^1}^2
+
\CC(t)\,\|f\|_{\leb^\infty}\||\slope f|\|_{\leb^1}
\quad
\text{for all}\ t>0,
\end{equation}
whenever $f\in\lip_{bs}(X)$.
In view of~\eqref{eq:var}, we can find $(f_k)_{k\in\N}\subset\lip_{bs}(X)$ such that $f_k\to f$ in $\leb^1(X)$ as $k\to\infty$ and 
\begin{equation*}
\var(f)
=
\lim_{k\to\infty}
\int_X|\slope f_k|\di\m.
\end{equation*}
Up to a truncation, we can also assume that $\|f_k\|_{\leb^\infty}\le\|f\|_{\leb^\infty}$ for $k\in\N$.
The conclusion hence follows by applying~\eqref{eq:volpe} to each $f_k$ and then passing to the limit as $k\to\infty$. 
\end{proof}

\begin{remark}
In a $\RCD(0,N)$ space, it holds that $\theta(t)\le C_Nt^{-\frac N2}$ and $\CC(t)\le C_N\sqrt{t}$ for all $t>0$ for some $C_N>0$ depending on $N$ only (that may vary from line to line in what follows), see \cref{rem:ultracontractivity} below. 
In this setting, inequality~\eqref{eq:isostrana} hence implies that
\begin{equation}
\label{eq:isso}
\|f\|_{\leb^2}^2
\le 
C_N\,\|f\|_{\leb^1}^{\frac2{N+1}}\,\|f\|_{\leb^\infty}^{\frac N{N+1}}\,\var(f)^{\frac N{N+1}}
\end{equation}
whenever $f\in\bv(X)\cap\leb^\infty(X)$. 
If $f=\chi_E$ for some $E\subset X$ such that $\chi_E\in\bv(X)$, then~\eqref{eq:isso} becomes 
$\m(E)^{\frac {N-1}N} 
\le C_N\,\per(E)$.
For this reason, the interpolation inequality~\eqref{eq:isostrana} is a kind of implicit isoperimetric-type inequality.
\end{remark}

\begin{remark}
\label{rem:ultracontractivity}
The ultracontractivity property~\eqref{eq:ultrac} is available in a wide range of settings, such as Markov spaces supporting a Sobolev inequality~\cite{BGL14}*{Sect.~6.3}, hence $\RCD(K,N)$ spaces with $N<\infty$~\cite{GM14}*{Rem.~5.17} and sub-Riemannian manifolds~\cite{GN96}.
For $\RCD(K,\infty)$ spaces with a uniform lower bound on the  measure of balls, see~\cite{DeMoSe}*{Prop.~2.4}.
\end{remark}

\subsection{Caloric-type Poincaré inequality and compactness}

The following result gives a \emph{caloric-type Poincaré inequality} for $\bv$ functions.

\begin{theorem}
\label{res:L1-grad}
If $f\in\bv(X)$, then
\begin{equation}
\label{eq: L1-grad}
\|f-\heat_tf\|_{\leb^1}
\le 
\CC(t)
\,\var(f)
\quad 
\text{for all}\ t\ge0.
\end{equation}
\end{theorem}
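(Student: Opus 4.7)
The plan is to combine an $\leb^1$--$\leb^\infty$ duality argument with the caloric integration-by-parts formula \cref{res:heat_ibp} and the smoothing property \ref{eq:c-Feller}. I would first establish the estimate for $f \in \lip_{bs}(X)$, and then pass to the limit via the defining approximation of $\bv$ functions.

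For $f \in \lip_{bs}(X)$, both $f$ and $\heat_t f$ lie in $\leb^1 \cap \leb^2(X)$, so $\leb^1$--$\leb^\infty$ duality, applied via truncations of $\operatorname{sgn}(f - \heat_t f)$ against an exhaustion by sets of finite measure, yields
$$\|f - \heat_t f\|_{\leb^1} = \sup\left\{\int_X g(f - \heat_t f) \di\m : g \in \leb^\infty \cap \leb^2(X),\ \|g\|_{\leb^\infty} \le 1\right\}.$$
For any such $g$, \cref{res:heat_ibp} gives
$$\int_X g(f - \heat_t f) \di\m = \int_0^t \int_X \slope f \cdot \slope \heat_s g \di\m \di s,$$
while \ref{eq:c-Feller} provides $\lip(\heat_s g) \le \cc(s)\,\|g\|_{\leb^\infty} \le \cc(s)$, so that $|\slope \heat_s g|_w \le \cc(s)$ $\m$-a.e. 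Pointwise Cauchy--Schwarz together with Fubini then yields
$$\int_X g(f - \heat_t f) \di\m \le \int_0^t \cc(s)\, \||\slope f|_w\|_{\leb^1} \di s = \CC(t)\, \||\slope f|_w\|_{\leb^1},$$
and taking the supremum over $g$ gives $\|f - \heat_t f\|_{\leb^1} \le \CC(t)\, \||\slope f|_w\|_{\leb^1}$.

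To pass from $\lip_{bs}(X)$ to $\bv(X)$, I pick $(f_k)_{k\in\N} \subset \lip_{bs}(X)$ as in \eqref{eq:var} with $f_k \to f$ in $\leb^1(X)$ and $\int_X |\slope f_k| \di\m \to \var(f)$. Since $\||\slope f_k|_w\|_{\leb^1} \le \int_X |\slope f_k| \di\m$ and $f_k - \heat_t f_k \to f - \heat_t f$ in $\leb^1(X)$ by the $\leb^1$-contraction property of $\heat_t$, passing to the $\liminf$ as $k \to \infty$ in the $\lip_{bs}$ bound delivers \eqref{eq: L1-grad}. The main technical subtlety is that \cref{res:heat_ibp} as stated requires $g \in \sob^{1,2}(X)$, whereas the duality naturally produces $g \in \leb^\infty \cap \leb^2(X)$; I circumvent this by mollifying $g \rightsquigarrow \heat_\delta g \in \dom(\Delta) \subset \sob^{1,2}(X)$, applying the lemma to $\heat_\delta g$, and letting $\delta \to 0^+$ using \eqref{eq:heat_in_L_infty} together with strong $\leb^2$-continuity of $(\heat_s)_{s \ge 0}$ at zero. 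Beyond this mild regularity bookkeeping, no serious obstacle is expected.
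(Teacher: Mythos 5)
Your proof is correct and follows essentially the same route as the paper: dualize against bounded test functions $g$, apply \cref{res:heat_ibp} together with \ref{eq:c-Feller} to bound $\int_X g\,(f_k-\heat_tf_k)\di\m$ by $\CC(t)\,\||\slope f_k|\|_{\leb^1}$, and pass to the limit along the approximating sequence from \eqref{eq:var}. The only difference is in the technical bookkeeping for the test function: where you regularize $g$ by $\heat_\delta$ and let $\delta\to0^+$ (which works, at the mild cost of seeing $\CC(t+\delta)-\CC(\delta)$ before the limit), the paper instead approximates $g\in\leb^\infty(X)$ weakly-$\ast$ by functions in $\sob^{1,2}\cap\leb^\infty(X)$.
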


\begin{proof}
We can find $f_k\in\lip_{bs}(X)$ such that $f_k\to f$ in $\leb^1(X)$ as $k\to\infty$ and
\begin{equation}
\label{eq:approx_BV}
\var(f)(X)
=
\lim_{k\to+\infty}\int_X|\slope f_k|\di\m.
\end{equation}
In particular, $f_k\in \sob^{1,2}(X)$ with $|\slope f_k|_w\le|\slope f_k|$ $\m$-a.e.\ in~$X$.
Now, given $g\in \sob^{1,2}(X)\cap \leb^\infty(X)$, by \cref{res:heat_ibp} we can estimate 
\begin{equation*}
\int_X g\,(f_k-\heat_tf_k)\di\m
=
\int_0^t\int_X
\slope f_k\cdot \slope\heat_sg\di\m\di s
\le 
\||\slope f_k|\|_{\leb^1}
\int_0^t\,\||\slope\heat_s g|_w\|_{\leb^\infty}\di s.
\end{equation*}
Since $\heat_sg\in\lip_b(X)$ with $|\slope\heat_sg|_w\le|\slope\heat_sg|\le \cc(s)\,\|g\|_{\leb^\infty}$ for all $s\in(0,t)$ thanks to~\eqref{eq:c-Feller}, we can write
\begin{equation*}
\int_X g\,(f_k-\heat_tf_k)\di\m
\le 
\||\slope f_k|\|_{\leb^1}
\,
\|g\|_{\leb^\infty}
\int_0^t\,\cc(s)\di s
=
\CC(t)\,\|g\|_{\leb^\infty}\,\||\slope f_k|\|_{\leb^1}
\end{equation*}
whenever $g\in \sob^{1,2}(X)\cap \leb^\infty(X)$.
Now, given $g\in \leb^\infty(X)$, by a plain approximation argument exploiting~\cite{G22}*{Lem.~3.2}, we can find $g_j\in \sob^{1,2}(X)\cap \leb^\infty(X)$ such that $g_n\overset{\ast}{\rightharpoonup} g$ in $\leb^\infty(X)$.
Consequently, we get that 
\begin{equation*}
\int_X g\,(f_k-\heat_tf_k)\di\m
\le 
\CC(t)\,\|g\|_{\leb^\infty}\,\||\slope f_k|\|_{\leb^1}
\end{equation*}  
whenever $g\in \leb^\infty(X)$. 
The conclusion hence readily follows by~\eqref{eq:approx_BV}.
\end{proof}

As a consequence of \cref{res:L1-grad}, we can prove the following compactness result for uniformly bounded $\bv$ functions.

\begin{corollary}[Compactness]
Let $(X,\d)$ be a proper metric space.
If $(f_k)_{k\in\N}\subset\bv(X)$ is such that
\begin{equation*}
\sup_{k\in\N}\|f_k\|_{\leb^\infty}+\var(f_k)<\infty
\end{equation*} 
then there exists a subsequence $(f_{k_j})_{j\in\N}$ and $f\in \leb^1_{\rm loc}(X)$ such that $f_{k_j}\to f$ in $\leb^1_{\rm loc}(X)$.
\end{corollary}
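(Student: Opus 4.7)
The plan is to apply a Riesz--Fr\'echet--Kolmogorov-type argument, using the heat semigroup as a regularizer: for each fixed $t>0$, the smoothed sequence $(\heat_tf_k)_k$ is equi-Lipschitz and equi-bounded, hence locally precompact on the proper space $(X,\d)$ by Arzel\`a--Ascoli, while \cref{res:L1-grad} quantifies in $\leb^1$ how close $f_k$ is to $\heat_tf_k$, uniformly in $k$, with an error that vanishes as $t\to 0^+$.

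Let $M=\sup_k\bigl(\|f_k\|_{\leb^\infty}+\var(f_k)\bigr)<\infty$. The first step is to fix $t>0$ and observe that, by~\eqref{eq:maxprinciple}, $\|\heat_tf_k\|_{\leb^\infty}\le M$, and by~\ref{eq:c-Feller} each $\heat_tf_k$ admits a Lipschitz representative with $\lip(\heat_tf_k)\le \cc(t)M$. Since $(X,\d)$ is proper, on every closed ball $\bar B\subset X$ the family $(\heat_tf_k|_{\bar B})_k$ is equi-bounded and equi-continuous, hence admits a uniformly convergent subsequence on $\bar B$. Exhausting $X$ by balls centered at a fixed point and diagonalizing in the radius, for each fixed $t$ we obtain a subsequence along which $\heat_tf_k$ converges locally uniformly; since $\m$ is finite on bounded sets, this upgrades to convergence in $\leb^1_{\mathrm{loc}}(X)$.

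The second step is a diagonal extraction in $t$: choose $t_n=1/n$, and inductively extract nested subsequences $(f_k^{(n)})_k$ so that $(\heat_{t_n}f_k^{(n)})_k$ converges in $\leb^1_{\mathrm{loc}}(X)$ as $k\to\infty$. Set $f_{k_j}=f_j^{(j)}$. Given a bounded open set $U\subset X$ and $\eps>0$, by \cref{res:L1-grad} and the bound $\var(f_{k_j})\le M$ we have
\begin{equation*}
\|f_{k_i}-f_{k_j}\|_{\leb^1(U)}
\le
2\,\CC(t_n)\,M
+
\|\heat_{t_n}f_{k_i}-\heat_{t_n}f_{k_j}\|_{\leb^1(U)}.
\end{equation*}
Since $\cc\in\leb^1_{\mathrm{loc}}([0,\infty))$, we have $\CC(t_n)\to 0$, so we fix $n$ large enough that $2\CC(t_n)M<\eps/2$; by construction of the diagonal sequence, the remaining term is less than $\eps/2$ once $i,j\ge n$ are large enough. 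Hence $(f_{k_j})$ is Cauchy in $\leb^1(U)$ for every bounded $U$, and since $\leb^1(U)$ is complete it converges to some $f_U\in\leb^1(U)$. A routine compatibility check between exhausting balls produces a single $f\in\leb^1_{\mathrm{loc}}(X)$ such that $f_{k_j}\to f$ in $\leb^1_{\mathrm{loc}}(X)$.

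The only delicate point is the diagonalization: one needs to be sure that properness is used exactly once, for Arzel\`a--Ascoli on each ball at each scale $t_n$, and that the caloric estimate in \cref{res:L1-grad} is global in $\leb^1$, so it controls $\|f_{k_i}-\heat_{t_n}f_{k_i}\|_{\leb^1(U)}$ uniformly in $U$ and $i$. All other ingredients are standard.
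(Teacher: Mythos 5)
Your proposal is correct and follows essentially the same route as the paper: regularize with $\heat_{t_n}$, use \eqref{eq:c-Feller} plus the maximum principle to get equi-bounded, equi-Lipschitz families, apply Arzel\`a--Ascoli on bounded sets, control $\|f_k-\heat_{t_n}f_k\|_{\leb^1}$ uniformly via \cref{res:L1-grad}, and conclude by a diagonal argument and completeness of $\leb^1(U)$. Your write-up is in fact slightly more explicit than the paper's about the diagonal extraction in $n$.
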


\begin{proof}
Define $f_{k,n}=\heat_{\frac1n}f_k$ for $k,n\in\N$ and note that, in virtue of~\eqref{eq:c-Feller}, $f_{k,n}\in\lip_b(X)$ with $\|f_{k,n}\|_{\leb^\infty}\le M$ and $\lip(f_{k,n})\le\cc\left(\frac1n\right)M$, where $M=\sup_{k\in\N}\|f_k\|_{\leb^\infty}<\infty$.
In particular, for each $n\in\N$ fixed, the sequence $(f_{k,n})_{k\in\N}\subset\lip_b(X)$ is equi-bounded and equi-Lipschitz.
By Arzelà--Ascoli's Theorem, we can thus find a sequence $(k_j)_{j\in\N}$ such that $(f_{k_j,n})_{j\in\N}$ is uniformly convergent on any bounded $U\subset X$.
Consequently, we can exploit \cref{res:L1-grad} to estimate
\begin{equation*}
\begin{split}
\limsup_{i,j\to\infty}
\int_U|f_{k_i}-f_{k_j}|\di\m 
&\le 
\limsup_{i,j\to\infty}
\int_U|f_{k_i,n}-f_{k_j,n}|\di\m 
\\
&\quad+
\limsup_{i,j\to\infty}
\int_U|f_{k_i}-f_{k_i,n}|+|f_{k_j}-f_{k_j,n}|\di\m
\\
&\le 
2\,\CC\left(\tfrac1n\right)\,\sup_{k\in\N}\var(f_k) 
\end{split}
\end{equation*} 
for any bounded $U\subset X$.
Since $n\in\N$ is arbitrary and $L^1(U)$ is a Banach space, this proves that $(f_{k_j})_{j\in\N}$ converges in $L^1(U)$ for any bounded $U\subset X$.
Up to extracting a further subsequence (which we do not relabel for simplicity), we can find $f\in\leb^1_{\rm loc}(X)$ such that $f_{k_j}\to f$ in $\leb^1_{\rm loc}(X)$, yielding the conclusion.
\end{proof}

By combining \cref{th:W1-He1,res:L1-grad}, we get the following interpolation estimate for the $L^1$ norm of a $\bv$ function.

\begin{corollary}
\label{th:W1-grad-L1}
If $f\in \mathrm{BV}(X)$, then
\begin{equation}
\label{eq:W1-grad-L1}
\|f\|_{\leb^1}
\le 
\cc(t)\,\wa_1(f^{+}\m,f^{-}\m)+\CC(t)\,\var(f)
\quad  
\text{for all}\ t>0.
\end{equation}
\end{corollary}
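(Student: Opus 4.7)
\medskip

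\noindent\textbf{Proof sketch.} The plan is to combine the two preceding results via the most natural triangle inequality. Since $f\in\bv(X)\subset\leb^1(X)$, both $f^+,f^-\in\leb^1(X)$ are non-negative, so the right-hand side of \eqref{eq:W1-grad-L1} is well defined (with the convention that a bound by $+\infty$ is trivial). For any $t>0$, I would write
\begin{equation*}
\|f\|_{\leb^1}
\le
\|f-\heat_tf\|_{\leb^1}+\|\heat_tf\|_{\leb^1}
=
\|f-\heat_tf\|_{\leb^1}+\|\heat_t(f^+-f^-)\|_{\leb^1},
\end{equation*}
using the linearity of $\heat_t$ together with $f=f^+-f^-$.

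Next, the first summand is controlled by $\CC(t)\,\var(f)$ thanks to the caloric-type Poincaré inequality of \cref{res:L1-grad}, while the second summand is estimated by $\cc(t)\,\wa_1(f^+\m,f^-\m)$ as a direct consequence of \cref{th:W1-He1} applied to the pair $f_0=f^+$, $f_1=f^-$. Summing the two bounds yields \eqref{eq:W1-grad-L1}.

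There is no real obstacle: the argument is just a one-line telescoping combined with the two theorems already established in the section. The only thing worth noting is that we rely on the linearity of the heat semigroup (granted by the infinitesimal Hilbertianity assumption~\eqref{item:ass_ch}) to split $\heat_tf=\heat_tf^+-\heat_tf^-$, and on the fact that $f^\pm\in\leb^1(X)$ so that the hypotheses of \cref{th:W1-He1} are satisfied.
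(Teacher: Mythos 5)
Your argument is correct and coincides with the paper's proof: the paper writes the same chain in reverse, namely $\cc(t)\,\wa_1(f^+\m,f^-\m)\ge\|\heat_t(f^+-f^-)\|_{\leb^1}\ge\|f\|_{\leb^1}-\|f-\heat_tf\|_{\leb^1}\ge\|f\|_{\leb^1}-\CC(t)\,\var(f)$, using exactly \cref{th:W1-He1} and \cref{res:L1-grad} as you do. No issues.
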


\begin{proof}
By \cref{th:W1-He1,res:L1-grad}, we can estimate
\begin{equation*}
\begin{split}
\cc(t)\,\wa_1(f^+\m,f^-\m)
&\ge 
\|\heat_t(f^+-f^-)\|_{\leb^1}
\ge
\|f\|_{\leb^1}
-
\|f-\heat_tf\|_{\leb^1}
\\
&\ge 
\|f\|_{\leb^1}
-
\CC(t)\,\var(f)
\end{split}
\end{equation*}
readily yielding the conclusion.
\end{proof}

\subsection{Implicit indeterminacy estimate}

The next result provides an implicit indeterminacy estimate, which, in few words, quantifies the relation between the Wasserstein distance of positive and negative parts of an $\leb^1\cap\leb^\infty$ function and the size of its zero set.

\begin{theorem}
\label{th:impl_indet}
If $\m(X)<\infty$ and $f\in \leb^{\infty}(X,\m)$, then
\begin{equation}
\label{eq:wa_1-t_ind}
\|f\|_{\leb^1}
\le 
\cc(t)\,\wa_1(f^+\m,f^-\m)
+
2\sqrt{\CC(t)\,\|f\|_{\leb^{\infty}}\,\|f\|_{\leb^{1}}\,\per(\{f>0\})}
\quad
\text{for all}\ t\ge0.
\end{equation}
\end{theorem}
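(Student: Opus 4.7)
My plan is to reduce to the case where the $\wa_1$-duality is available and then split $\|f\|_{\leb^1}$ against the heat-mollified test function $\heat_t(\chi_A-\chi_{A^c})$, where $A:=\{f>0\}$. If $\int_X f\di\m\neq 0$, then testing the dual formula~\eqref{eq:dual_W1} against arbitrarily large constants forces $\wa_1(f^+\m,f^-\m)=+\infty$, so the inequality holds trivially; hence I may assume $\int_X f\di\m=0$ (and also $\per(A)<\infty$, else the right-hand side is $+\infty$). Since $\m(X)<\infty$, the maximum principle and mass preservation~\eqref{eq:mass_preserving} give $\heat_t 1=1$, so
\begin{equation*}
\heat_t(\chi_A-\chi_{A^c})
=
2\heat_t\chi_A-1.
\end{equation*}

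The key identity to exploit is $\|f\|_{\leb^1}=\int_X f(\chi_A-\chi_{A^c})\di\m$, which I would rewrite as
\begin{equation*}
\|f\|_{\leb^1}
=
\int_X f\,\heat_t(\chi_A-\chi_{A^c})\di\m
+
2\int_X f(\chi_A-\heat_t\chi_A)\di\m.
\end{equation*}
For the first integral, by~\ref{eq:c-Feller} and the maximum principle, $\heat_t(\chi_A-\chi_{A^c})$ lies in $\lip_b(X)$ with Lipschitz constant at most $\cc(t)$ and sup-norm at most $1$; since $f^+\m$ and $f^-\m$ share the same total mass, dividing by $\cc(t)$ produces an admissible competitor in~\eqref{eq:dual_W1}, and the integral is bounded by $\cc(t)\,\wa_1(f^+\m,f^-\m)$. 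For the second integral, the Cauchy--Schwarz inequality together with the elementary bound $\|f\|_{\leb^2}^2\le\|f\|_{\leb^\infty}\|f\|_{\leb^1}$ yields
\begin{equation*}
\left|2\int_X f(\chi_A-\heat_t\chi_A)\di\m\right|
\le
2\sqrt{\|f\|_{\leb^\infty}\|f\|_{\leb^1}}\;\|\chi_A-\heat_t\chi_A\|_{\leb^2}.
\end{equation*}

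The heart of the argument is then the $\leb^2$-estimate $\|\chi_A-\heat_t\chi_A\|_{\leb^2}^2\le\CC(t)\,\per(A)$, which I would obtain by approximation. Since $\per(A)<\infty$, I can pick $u_k\in\lip_{bs}(X)$ with $0\le u_k\le 1$, $u_k\to\chi_A$ in $\leb^1(X)$ (hence in $\leb^2(X)$ by domination), and $\int_X|\slope u_k|\di\m\to\per(A)$; the truncation to $[0,1]$ is harmless, because composition with the $1$-Lipschitz projection onto $[0,1]$ does not increase the slope. Combining the self-adjointness identity $\int_X u_k\,\heat_t u_k\di\m=\|\heat_{t/2}u_k\|_{\leb^2}^2$ with the $\leb^2$-contraction $\|\heat_t u_k\|_{\leb^2}\le\|\heat_{t/2}u_k\|_{\leb^2}$ leads to
\begin{equation*}
\|u_k-\heat_t u_k\|_{\leb^2}^2
\le
\|u_k\|_{\leb^2}^2-\|\heat_{t/2}u_k\|_{\leb^2}^2,
\end{equation*}
which by \cref{res:quant_contraction} is in turn bounded by $\CC(t)\,\||\slope u_k|\|_{\leb^1}$. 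Passing to the limit $k\to\infty$ delivers the required $\leb^2$-bound, and assembling the two estimates finishes the proof. The only mildly delicate point is the approximation step for $\chi_A$; the rest is a clever choice of test function combined with Cauchy--Schwarz.
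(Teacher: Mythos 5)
Your proof is correct, but it takes a genuinely different route from the paper's. The paper first bounds the full norm $\|\heat_t(f^+-f^-)\|_{\leb^1}$ from above by $\cc(t)\,\wa_1(f^+\m,f^-\m)$ (\cref{th:W1-He1}) and from below by $\|f\|_{\leb^1}-2\int_X\sqrt{\heat_t(f^+)\,\heat_t(f^-)}\di\m$ via the pointwise inequality $|a-b|\ge a+b-2\sqrt{ab}$, the geometric-mean term being controlled in \cref{th:per} by Cauchy--Schwarz in $\leb^1$ together with the caloric Poincar\'e inequality of \cref{res:L1-grad}. You instead test $f$ against the single mollified signed indicator $\heat_t(\chi_A-\chi_{A^c})$ with $A=\{f>0\}$, estimate that term directly by the duality formula~\eqref{eq:dual_W1}, and control the remainder $2\int_X f(\chi_A-\heat_t\chi_A)\di\m$ by Cauchy--Schwarz in $\leb^2$ combined with $\|f\|_{\leb^2}^2\le\|f\|_{\leb^\infty}\|f\|_{\leb^1}$ and the bound $\|\chi_A-\heat_t\chi_A\|_{\leb^2}^2\le\CC(t)\per(A)$, which you derive from \cref{res:quant_contraction} by approximation; all steps check out (the truncation of the approximating sequence, the reduction to $\int_Xf\di\m=0$, and the identity $\heat_t(\chi_A-\chi_{A^c})=2\heat_t\chi_A-1$ are all legitimate), and the final constant coincides exactly with the paper's. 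What each approach buys: the paper's argument avoids any approximation of $\chi_A$ by routing everything through the already-established \cref{th:W1-He1,th:per}, and its lower bound on $\|\heat_tf\|_{\leb^1}$ is reused verbatim for the eigenfunction estimates; your argument isolates more transparently where the Wasserstein duality enters (a single explicit test function) and replaces the $\sqrt{ab}$ trick by a clean $\leb^2$ energy estimate. Two small remarks: the reduction to zero mean is not actually needed (neither half of the paper's argument uses it), and your $\leb^2$ perimeter bound follows in one line from \cref{res:L1-grad} via $\|\chi_A-\heat_t\chi_A\|_{\leb^2}^2\le\|\chi_A-\heat_t\chi_A\|_{\leb^1}$, since $|\chi_A-\heat_t\chi_A|\le1$ by the maximum principle, which would let you skip the approximation argument entirely.
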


To prove \cref{th:impl_indet}, we need the following preliminary result.

\begin{lemma}\label{th:per}
If $A\subset X$ is an $\m$-measurable set with $\m(A)<\infty$, then
\begin{equation}
\label{eq:per_set}
\int_{A^c}{\heat_t(\chi_{A})}\di\m
\le
\frac{1}{2}\,\CC(t)\,\per(A)
\quad
\text{for all}\
t\ge0.
\end{equation}
Moreover, if $\m(X)<\infty$ and $f\in \leb^{\infty}(X)$, then
\begin{equation}
\label{eq:per_Linfty_funct}
\int_{X}\sqrt{\heat_t(f^+)\,\heat_t(f^{-})}\di\m
\le
\sqrt{
\CC(t)
\,
\|f\|_{\leb^{\infty}}
\,
\|f\|_{\leb^{1}}
\,
\per(\lbrace f>0\rbrace)}
\quad
\text{for all}\ t\ge0.
\end{equation}
\end{lemma}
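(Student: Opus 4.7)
To establish \eqref{eq:per_set}, I would reduce to the case $\per(A)<\infty$ (the bound being trivial otherwise) and apply the caloric-type Poincaré inequality of \cref{res:L1-grad} to $\chi_A\in\bv(X)$, obtaining $\|\chi_A-\heat_t\chi_A\|_{\leb^1}\le\CC(t)\per(A)$. The key observation is that the maximum principle \eqref{eq:maxprinciple} forces $\chi_A-\heat_t\chi_A\ge0$ on $A$ and $\le0$ on $A^c$, so
\begin{equation*}
\|\chi_A-\heat_t\chi_A\|_{\leb^1}
=
\int_A(1-\heat_t\chi_A)\di\m
+
\int_{A^c}\heat_t\chi_A\di\m.
\end{equation*}
The mass-preservation property \eqref{eq:mass_preserving} then gives $\int_A(1-\heat_t\chi_A)\di\m=\m(A)-\int_A\heat_t\chi_A\di\m=\int_{A^c}\heat_t\chi_A\di\m$, so the two pieces coincide, producing the factor $\tfrac12$ and yielding \eqref{eq:per_set}.

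For \eqref{eq:per_Linfty_funct}, I would set $A=\{f>0\}$ and use $f^+\le\|f\|_{\leb^\infty}\chi_A$, $f^-\le\|f\|_{\leb^\infty}\chi_{A^c}$, which by sign preservation \eqref{eq:maxprinciple} give $\heat_t(f^-)\le\|f\|_{\leb^\infty}\heat_t(\chi_{A^c})$ on $A$ and $\heat_t(f^+)\le\|f\|_{\leb^\infty}\heat_t(\chi_A)$ on $A^c$. Splitting $\int_X=\int_A+\int_{A^c}$ and applying Cauchy--Schwarz on each piece,
\begin{equation*}
\int_A\sqrt{\heat_t(f^+)\,\heat_t(f^-)}\di\m
\le
\Bigl(\textstyle\int_A\heat_t(f^+)\di\m\Bigr)^{1/2}
\Bigl(\textstyle\int_A\heat_t(f^-)\di\m\Bigr)^{1/2}.
\end{equation*}
The first factor is bounded by $\|f^+\|_{\leb^1}^{1/2}$ via $\leb^1$-contraction, while the second is controlled by $\bigl(\tfrac12\|f\|_{\leb^\infty}\CC(t)\per(A)\bigr)^{1/2}$ by invoking \eqref{eq:per_set} applied to the set $A^c$ (legitimate since $\m(X)<\infty$ implies $\m(A^c)<\infty$, and $\per(A^c)=\per(A)$). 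A symmetric estimate on $A^c$ yields the same bound with $\|f^+\|_{\leb^1}$ replaced by $\|f^-\|_{\leb^1}$. Summing the two contributions and invoking the elementary inequality $\sqrt{a}+\sqrt{b}\le\sqrt{2(a+b)}$ with $a+b=\|f^+\|_{\leb^1}+\|f^-\|_{\leb^1}=\|f\|_{\leb^1}$ then delivers the claim.

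The main obstacle I anticipate is identifying the right decomposition. A naive global Cauchy--Schwarz on $X$ collapses the bound to $\sqrt{\|f^+\|_{\leb^1}\|f^-\|_{\leb^1}}$ and loses all information about $\per(A)$; one genuinely needs to split according to the sign of $f$ and then, on each piece, bound the ``wrong-side'' heat-regularized part by $\heat_t\chi_{A^c}$ (resp.~$\heat_t\chi_A$) so as to harvest the self-improved factor $\tfrac12$ from \eqref{eq:per_set}. Once this asymmetric splitting is in place, the remaining manipulations are routine.
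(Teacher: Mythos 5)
Your proposal is correct and follows essentially the same route as the paper: \eqref{eq:per_set} is obtained from \cref{res:L1-grad} together with the mass-preservation identity $\int_A(1-\heat_t\chi_A)\di\m=\int_{A^c}\heat_t\chi_A\di\m$ (which produces the factor $\tfrac12$), and \eqref{eq:per_Linfty_funct} by splitting over $\{f>0\}$ and $\{f\le0\}$, applying Cauchy--Schwarz on each piece, controlling the ``wrong-side'' factor through \eqref{eq:per_set} and $f^\mp\le\|f\|_{\leb^\infty}\chi_{\{f\le0\}}$ (resp.\ $\chi_{\{f>0\}}$), and recombining via $\sqrt a+\sqrt b\le\sqrt{2(a+b)}$. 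The only cosmetic difference is the order in which you apply Cauchy--Schwarz and the pointwise bound on $f^\mp$; the estimates are identical.
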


\begin{proof}
Since it is enough to discuss the case $\chi_A\in\bv(X)$, from \cref{res:L1-grad} we immediately get
\begin{align*}
C(t)\,\per(A)
&\ge 
\|\chi_A-\heat_t(\chi_A)\|_{\leb^1}=
\int_A (1-\heat_t(\chi_A))\di\m 
+ 
\int_{A^c}\heat_t(\chi_A)\di\m \\
&= 
\int_X (1-\heat_t(\chi_A))\di\m
-
\int_{A^c}1\di\m
+
2\int_{A^c}\heat_t(\chi_A)\di\m
=
2\int_{A^c}\heat_t(\chi_A)\di\m,
\end{align*}
yielding~\eqref{eq:per_set}.
Concerning~\eqref{eq:per_Linfty_funct}, since $f^-\le \|f^-\|_{\leb^\infty}\chi_{\{f\le 0\}}$, by~\eqref{eq:maxprinciple}, the Cauchy--Schwarz inequality, the mass-preservation property~\eqref{eq:mass_preserving} and the previous~\eqref{eq:per_set}, we get
\begin{align*}
\left(
\int_{\lbrace f>0\rbrace}\sqrt{\heat_t(f^+)\,\heat_t(f^{-})}\di\m
\right)^2
&\leq 
\|f^-\|_{\leb^{\infty}}
\left(
\int_{\lbrace f>0\rbrace}\sqrt{\heat_t(f^+)\,\heat_t(\chi_{\lbrace f\leq 0\rbrace})}\di\m 
\right)^2
\\
&\leq 
\|f^-\|_{\leb^{\infty}}
\,
\|H_{t}(f^+)\|_{\leb^1}
\int_{\lbrace f>0\rbrace}{\heat_t(\chi_{\lbrace f\leq 0\rbrace})}\di\m
\\
&\leq
\frac12
\,
\CC(t)
\,
\|f^-\|_{\leb^{\infty}}
\,
\|f^+\|_{\leb^1}
\,
\per(\lbrace f>0\rbrace).
\end{align*}
Similarly, we can also estimate
\begin{equation*}
\left(
\int_{\lbrace f\le0\rbrace}\sqrt{\heat_t(f^+)\,\heat_t(f^{-})}\di\m
\right)^2
\leq 
\frac12
\,
\CC(t)
\,
\|f^+\|_{\leb^{\infty}}
\,
\|f^-\|_{\leb^1}
\,
\per(\lbrace f\le0\rbrace),
\end{equation*}
and the conclusion readily follows by observing that $\per(\lbrace f>0\rbrace)=\per(\lbrace f\leq 0\rbrace)$, $\|f^\pm\|_{\leb^{\infty}}\leq \|f\|_{\leb^{\infty}}$ and $\|f^+\|_{\leb^1}+\|f^-\|_{\leb^1}=\|f\|_{\leb^1}$.
\end{proof}

We can now give the proof of \cref{th:impl_indet}.

\begin{proof}[Proof of \cref{th:impl_indet}]
By~\eqref{eq:W1-He1}, we have
\begin{equation}
\label{eq:W1-he2_ind}
\|\heat_t(f^+-f^-)\|_{\leb^1}
\le 
\cc(t)
\,
\wa_1(f^+\m,f^-\m).
\end{equation}
Since $|a-b| \ge a+b-2\sqrt{ab}$ whenever $a,b\ge0$, from~\eqref{eq:per_Linfty_funct} we get
\begin{equation}
\label{eq:he2-per_ind}
\begin{split}
\|\heat_t(f^+-f^-)\|_{\leb^1}
&\ge 
\int_X \heat_t(f^+)+\heat_t(f^-)-2\sqrt{\heat_t(f^+)\heat_t(f^-)}\di\m
\\
&\ge 
\|f\|_{\leb^1}
-
2\sqrt{\CC(t)\,\|f\|_{\leb^{\infty}}\,\|f\|_{\leb^{1}}\,\per(\{f>0\})}
\end{split}
\end{equation}
and the conclusion follows by combining~\eqref{eq:W1-he2_ind} and \eqref{eq:he2-per_ind}.
\end{proof}

\subsection{Implicit estimates for eigenfunctions}

\cref{th:W1-He1,th:impl_indet} can be exploited to obtain implicit lower bounds on the (perimeter of the) nodal set and an indeterminacy-type inequality for eigenfunctions.

\begin{theorem}
\label{th:per nodal set}
If $f_{\lambda}$ is a $\lambda$-eigenfunction, then 
\begin{equation}\label{eq:per_nodal_set}
\per(\{f_{\lambda}>0\})
\,
\|f_{\lambda}\|_{\leb^{\infty}}
\ge 
\frac{(1-e^{-\lambda t})^2}{4\,\CC(t)}
\,
\|f_{\lambda}\|_{\leb^1}
\quad 
\text{for all}\ t>0
\end{equation}
and 
\begin{equation}
\label{eq:W1_eigen_impl}
\wa_{1}(f_{\lambda}^{+}\m,f_{\lambda}^{-}\m)
\ge 
\frac{e^{-\lambda t}}{\cc(t)}\,\|f_{\lambda}\|_{\leb^{1}}
\quad 
\text{for all}\ t>0.
\end{equation}
\end{theorem}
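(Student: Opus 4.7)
The plan is to exploit the fact, recorded in \cref{res:heated_eigenf}, that the heat semigroup acts on a $\lambda$-eigenfunction as a pure rescaling: $\heat_t f_\lambda = e^{-\lambda t} f_\lambda$ for every $t\ge 0$. This turns the implicit bounds of the previous subsection into quantitative lower bounds on $\per(\{f_\lambda>0\})\,\|f_\lambda\|_{\leb^\infty}$ and on $\wa_1(f_\lambda^+\m,f_\lambda^-\m)$, each calibrated by the decay factor $e^{-\lambda t}$.

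For the second inequality \eqref{eq:W1_eigen_impl}, the strategy is a direct application of \cref{th:W1-He1} with $f_0=f_\lambda^+$ and $f_1=f_\lambda^-$. By linearity of $\heat_t$ we have $\heat_t(f_\lambda^+-f_\lambda^-)=\heat_t f_\lambda=e^{-\lambda t}f_\lambda$, hence $\|\heat_t(f_\lambda^+-f_\lambda^-)\|_{\leb^1}=e^{-\lambda t}\|f_\lambda\|_{\leb^1}$, and the inequality $e^{-\lambda t}\|f_\lambda\|_{\leb^1}\le \cc(t)\,\wa_1(f_\lambda^+\m,f_\lambda^-\m)$ follows upon division by $\cc(t)$.

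For the perimeter bound \eqref{eq:per_nodal_set}, I would not invoke \cref{th:impl_indet} as a black box (it would only give an upper bound on $\wa_1$ and a weaker conclusion), but instead replay the second estimate in the proof of \cref{th:impl_indet}. Concretely, by the pointwise inequality $|a-b|\ge a+b-2\sqrt{ab}$ for $a,b\ge 0$, mass preservation \eqref{eq:mass_preserving}, and the key estimate \eqref{eq:per_Linfty_funct} from \cref{th:per}, one has
\begin{equation*}
\|\heat_t(f_\lambda^+-f_\lambda^-)\|_{\leb^1}
\ge \|f_\lambda\|_{\leb^1}-2\sqrt{\CC(t)\,\|f_\lambda\|_{\leb^\infty}\,\|f_\lambda\|_{\leb^1}\,\per(\{f_\lambda>0\})}.
\end{equation*}
Substituting $\|\heat_t(f_\lambda^+-f_\lambda^-)\|_{\leb^1}=e^{-\lambda t}\|f_\lambda\|_{\leb^1}$ on the left, rearranging to isolate the square root, squaring both sides, and dividing through by $\|f_\lambda\|_{\leb^1}>0$ then yields \eqref{eq:per_nodal_set}.

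The argument is essentially bookkeeping once the eigenfunction identity is inserted. The only mild subtlety is that \cref{th:per} requires $\m(X)<\infty$ (the perimeter estimate relies on mass preservation), which is consistent with the assumption made in \eqref{eq:per_nodal_set} since nontrivial compactly normalized eigenfunctions arise in that regime; under \ref{item:ass_m} and \eqref{ass: meas exp} the mass preservation is available, so no extra hypothesis is needed beyond what the theorem already implicitly asks for.
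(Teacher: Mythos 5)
Your proposal is correct and follows exactly the paper's argument: \eqref{eq:W1_eigen_impl} is \cref{th:W1-He1} combined with the identity $\heat_t f_\lambda = e^{-\lambda t}f_\lambda$ from \cref{res:heated_eigenf}, and \eqref{eq:per_nodal_set} is obtained by rerunning the proof of \cref{th:impl_indet} with the Wasserstein bound \eqref{eq:W1-he2_ind} replaced by the exact value $\|\heat_t(f_\lambda^+-f_\lambda^-)\|_{\leb^1}=e^{-\lambda t}\|f_\lambda\|_{\leb^1}$, which is precisely what the paper does. Your remark about the implicit finiteness of $\m(X)$ needed for \eqref{eq:per_Linfty_funct} is a fair observation and applies equally to the paper's own proof.
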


\begin{proof}
The proof of~\eqref{eq:per_nodal_set} is the same of~\eqref{eq:wa_1-t_ind}, since  one just need to replace~\eqref{eq:W1-he2_ind} with
\begin{equation}
\label{eq:norm1heat}
\|\heat_t(f^+_\lambda-f^-_\lambda)\|_{\leb^1}
=
\|\heat_tf_\lambda\|_{\leb^1}
=
e^{-\lambda t}\,\|f_{\lambda}\|_{\leb^{1}} 
\end{equation}
by \cref{res:heated_eigenf}.
Inequality~\eqref{eq:W1_eigen_impl} is again a consequence of  \cref{res:heated_eigenf}, together with \cref{th:W1-He1}.
\end{proof}

One can get rid of the $\leb^\infty$ norm in the lower bound~\eqref{eq:per_nodal_set} as soon as the heat semigroup $(\heat_t)_{t\ge0}$ is $\theta$-ultracontractive
as in~\eqref{eq:ultrac}.
Precisely, we have the following result. 

\begin{corollary}
\label{res:ultrac_nodal}
Under~\eqref{eq:ultrac}, if $f_\lambda$ is a $\lambda$-eigenfunction, then
\begin{equation}
\label{eq:ultrac_nodal}
\per(\left\{f_\lambda>0\right\})
\ge
\sup_{t>0}
\frac{e^{-\lambda t}(1-e^{-\lambda t})^2}{4\,\theta(t)\,\CC(t)}.
\end{equation}
\end{corollary}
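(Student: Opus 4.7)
The plan is to eliminate the $\|f_\lambda\|_{\leb^\infty}$ factor from the implicit bound~\eqref{eq:per_nodal_set} using the ultracontractivity assumption~\eqref{eq:ultrac} together with the explicit form of the heat flow on eigenfunctions given by \cref{res:heated_eigenf}. In essence, the ultracontractivity lets us control the $\leb^\infty$ norm of $\heat_t f_\lambda$ by the $\leb^1$ norm of $f_\lambda$, and the eigenfunction identity converts this into a pointwise control of $\|f_\lambda\|_{\leb^\infty}$ in terms of $\|f_\lambda\|_{\leb^1}$ at a cost of the factor $e^{\lambda t}$.

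Concretely, the first step is to apply \cref{res:heated_eigenf} to write $\heat_t f_\lambda = e^{-\lambda t} f_\lambda$ for every $t\ge 0$; in particular, $\|\heat_t f_\lambda\|_{\leb^\infty}=e^{-\lambda t}\|f_\lambda\|_{\leb^\infty}$. Assuming $f_\lambda\in\leb^1(X)$ (which is automatic when $\m(X)<\infty$ since eigenfunctions lie in $\leb^2\subset\leb^1$, and is otherwise needed for~\eqref{eq:ultrac_nodal} to be meaningful), the hypothesis~\eqref{eq:ultrac} applied to $f_\lambda$ yields
\begin{equation*}
e^{-\lambda t}\,\|f_\lambda\|_{\leb^\infty}
=
\|\heat_t f_\lambda\|_{\leb^\infty}
\le
\theta(t)\,\|f_\lambda\|_{\leb^1}
\quad\text{for all } t>0,
\end{equation*}
and hence $\|f_\lambda\|_{\leb^\infty}\le e^{\lambda t}\,\theta(t)\,\|f_\lambda\|_{\leb^1}$.

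The second step is to plug this estimate into the implicit lower bound~\eqref{eq:per_nodal_set}, obtaining
\begin{equation*}
\per(\{f_\lambda>0\})\,e^{\lambda t}\,\theta(t)\,\|f_\lambda\|_{\leb^1}
\ge
\frac{(1-e^{-\lambda t})^2}{4\,\CC(t)}\,\|f_\lambda\|_{\leb^1}.
\end{equation*}
Since $f_\lambda\not\equiv 0$ forces $\|f_\lambda\|_{\leb^1}>0$, we may cancel this factor and rearrange to get
\begin{equation*}
\per(\{f_\lambda>0\})
\ge
\frac{e^{-\lambda t}(1-e^{-\lambda t})^2}{4\,\theta(t)\,\CC(t)}
\quad\text{for every } t>0.
\end{equation*}
Taking the supremum over $t>0$ delivers~\eqref{eq:ultrac_nodal}.

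I do not anticipate a serious obstacle here: the argument is a direct composition of \cref{th:per nodal set} with the ultracontractivity bound, and the only subtlety is ensuring that $f_\lambda\in\leb^1(X)$ so that~\eqref{eq:ultrac} can be legitimately applied---this should either be implicit in the framework (e.g.\ when $\m(X)<\infty$) or be noted as the standing assumption making the right-hand side of~\eqref{eq:ultrac_nodal} nontrivial.
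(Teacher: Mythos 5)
Your proposal is correct and coincides with the paper's own argument: the paper likewise combines \cref{res:heated_eigenf} with~\eqref{eq:ultrac} to get $\|f_\lambda\|_{\leb^\infty}\le e^{\lambda t}\,\theta(t)\,\|f_\lambda\|_{\leb^1}$ and then substitutes this into~\eqref{eq:per_nodal_set}. Your remark on the integrability of $f_\lambda$ is a fair (if minor) point of care that the paper leaves implicit.
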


\begin{proof}
Thanks to \cref{res:heated_eigenf} and~\eqref{eq:ultrac}, we can estimate
\begin{equation*}
\|f_\lambda\|_{\leb^\infty}
=
e^{\lambda t}\|\heat_t f_\lambda\|_{\leb^\infty}
\le 
e^{\lambda t}\,\theta(t)\,\|f_\lambda\|_{\leb^1}
\quad
\text{for all}\ t>0,
\end{equation*}
which, combined with~\eqref{eq:per_nodal_set}, easily yields~\eqref{eq:ultrac_nodal}.
\end{proof}

\subsection{Implicit Buser inequality}

We conclude this section with the following result, yielding an implicit Buser inequality for the Cheeger constants $h_0(X)$ and $h_1(X)$.

\begin{theorem}
\label{th: impl Buser}
The following hold:
\begin{enumerate}[label=(\roman*),itemsep=1ex]

\item 
\label{item:impl_buser_finite}
if $\m(X)<\infty$, then
$h_1(X)\ge\sup\limits_{t>0}\left\{\dfrac{1-e^{-\lambda_1t}}{\CC(t)}\right\}$;

\item
\label{item:impl_buser_infinite}

if 
$\m(X)=\infty$, then
$h_0(X)\ge 2\,\sup\limits_{t>0}\left\{\dfrac{1-e^{-\lambda_0t}}{\CC(t)}\right\}$.

\end{enumerate}
\end{theorem}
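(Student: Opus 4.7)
The underlying idea in both cases is to compute the bilinear expression $\int_X(f-\heat_tf)\,f\di\m$ in two ways for a carefully chosen test function~$f$ built from a candidate set~$A$ in the definitions~\eqref{eq:Cheeger_0}--\eqref{eq:Cheeger_1}: the $\leb^2$ spectral decay in \cref{res:heat_lambdas_10} yields a lower bound, whereas the refined perimeter estimate~\eqref{eq:per_set} of \cref{th:per} yields an upper bound. The crucial identity
\begin{equation*}
\int_X(\heat_tf)\,f\di\m=\|\heat_{t/2}f\|_{\leb^2}^2
\end{equation*}
is a direct consequence of the semigroup property together with self-adjointness~\eqref{eq: heat self}.

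For~\ref{item:impl_buser_finite}, I fix a Borel $A\subset X$ with $0<\m(A)\le\m(X)/2$ and $\per(A)<\infty$ (otherwise there is nothing to prove) and set $f=\chi_A-\m(A)/\m(X)$, so that $\int_X f\di\m=0$ and $\|f\|_{\leb^2}^2=\m(A)(1-\m(A)/\m(X))$. Since $\heat_{t/2}f$ still has zero mean by mass preservation~\eqref{eq:mass_preserving}, applying \cref{res:heat_lambdas_10}\ref{item:heat_lambda_1} at time $t/2$ yields
\begin{equation*}
\int_X(f-\heat_tf)\,f\di\m=\|f\|_{\leb^2}^2-\|\heat_{t/2}f\|_{\leb^2}^2\ge(1-e^{-\lambda_1 t})\,\m(A)\Big(1-\tfrac{\m(A)}{\m(X)}\Big).
\end{equation*}
For the upper bound, mass preservation gives $\int_X(f-\heat_tf)\di\m=0$, hence the bilinear expression reduces to $\int_A(f-\heat_tf)\di\m$; expanding this via self-adjointness~\eqref{eq: heat self} and~\eqref{eq:mass_preserving} collapses the mean correction and leaves $\int_{A^c}\heat_t\chi_A\di\m$, which~\eqref{eq:per_set} bounds by $\tfrac12\CC(t)\per(A)$. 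Combining with $1-\m(A)/\m(X)\ge\tfrac12$ gives $\per(A)/\m(A)\ge(1-e^{-\lambda_1 t})/\CC(t)$; taking the infimum over admissible $A$ and the supremum over $t>0$ concludes~\ref{item:impl_buser_finite}.

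For~\ref{item:impl_buser_infinite}, since subtracting a nonzero constant is unavailable when $\m(X)=\infty$, I test directly with $f=\chi_A$ for any Borel $A$ with $0<\m(A)<\infty$ and $\per(A)<\infty$. \cref{res:heat_lambdas_10}\ref{item:heat_lambda_0} yields $\int_X(f-\heat_tf)\,f\di\m\ge(1-e^{-\lambda_0 t})\m(A)$, while mass preservation rewrites the same integral as $\int_A(1-\heat_t\chi_A)\di\m=\int_{A^c}\heat_t\chi_A\di\m\le\tfrac12\CC(t)\per(A)$ via~\eqref{eq:per_set}, giving $\per(A)/\m(A)\ge 2(1-e^{-\lambda_0 t})/\CC(t)$. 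The factor~$2$ present in~\ref{item:impl_buser_infinite} but absent from~\ref{item:impl_buser_finite} arises precisely because the constraint $\m(A)\le\m(X)/2$ in the latter produces a compensating factor $1-\m(A)/\m(X)\ge\tfrac12$. I do not anticipate any serious obstacle: the only delicate point is recognizing that the sharp bound~\eqref{eq:per_set}, rather than the straightforward caloric Poincaré inequality of \cref{res:L1-grad}, is what must enter the upper bound in order to produce the factor $\tfrac12$ that drives the statement.
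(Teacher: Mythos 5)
Your proposal is correct and follows essentially the same route as the paper: both arguments combine the identity $\int_X(\heat_tf)f\di\m=\|\heat_{t/2}f\|_{\leb^2}^2$, the spectral decay of \cref{res:heat_lambdas_10}, and the perimeter bound coming from \cref{res:L1-grad} applied to $\chi_A$, with the recentering $\chi_A-\m(A)/\m(X)$ and the factor $1-\m(A)/\m(X)\ge\tfrac12$ accounting for the discrepancy between the two cases exactly as in the paper. (Only a cosmetic remark: \eqref{eq:per_set} is not sharper than the caloric Poincaré inequality for $\chi_A$ — by the maximum principle and mass preservation the two are the same statement, since $\|\chi_A-\heat_t\chi_A\|_{\leb^1}=2\int_{A^c}\heat_t\chi_A\di\m$.)
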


\begin{proof}
We start by observing that, by \cref{res:L1-grad}, we have 
\begin{equation}
\label{eq:per_meas-heat}
\begin{split}
\CC(t)\,\per(A)
&\geq 
\left\Vert \chi_A-\heat_t(\chi_A)\right\Vert_{\leb^1}
=
\int_A (1-\heat_t(\chi_A))\di\m+\int_{A^c}\heat_t(\chi_A)\di\m 
\\
&=
2\,
\m(A)-2\int_A \heat_t(\chi_A)\di\m 
= 
2\,
\m(A)-2\left\Vert \heat_{t/2}(\chi_A)\right\Vert^2_{\leb^2}
\end{split}
\end{equation}
for any $\m$-measurable set $A\subset X$, thanks to~\eqref{eq:maxprinciple}, \eqref{eq:mass_preserving},
\eqref{eq: heat self} and the semigroup property.
We prove the two statements separately.

\vspace{1ex}

\textit{Proof of~\ref{item:impl_buser_finite}}.
Assume $\m(X)=1$ without loss of generality. 
Since $\heat_t(1)=\m(X)=1$ because of~\eqref{eq:mass_preserving}, we immediately get that 
\begin{equation*}
\int_{X}  \heat_{t/2}(\chi_A-\m(A))\di\m=0.
\end{equation*}
We can hence apply \cref{res:heat_lambdas_10}\ref{item:heat_lambda_1} to get
\begin{equation}
\label{eq:chi_heat}
\left\Vert \heat_{t/2}(\chi_A)\right\Vert^2_{2}=\m(A)^2+\left\Vert \heat_{t/2}(\chi_A-\m(A))\right\Vert^2_{2}\leq \m(A)^2+e^{-\lambda_1 t}\left\Vert \chi_A-\m(A)\right\Vert^2_{2}.
\end{equation}
By direct computation, we can write 
\begin{equation*}
\left\Vert \chi_A-\m(A)\right\Vert^2_{2}=\m(A)\,(1-\m(A)),
\end{equation*}
so that, by combining~\eqref{eq:per_meas-heat} with~\eqref{eq:chi_heat}, we get that
\begin{equation}
\label{eq:per-meas}
\CC(t)\,\per(A)
\geq 
2\,\m(A)\,(1-\m(A))\,\big(1-e^{-\lambda_1t}\big) 
\quad 
\text{for every}\ t>0.
\end{equation}
The conclusion hence follows by recalling the definition in~\eqref{eq:Cheeger_1}.

\vspace{1ex}

\textit{Proof of~\ref{item:impl_buser_infinite}}.
We can bound the last term in the chain~\eqref{eq:per_meas-heat} using \cref{res:heat_lambdas_10}\ref{item:heat_lambda_0}.
The conclusion hence immediately follows by the definition in~\eqref{eq:Cheeger_0}.
\end{proof}

\begin{remark}
\label{rem:bl-replacing-w1}
By appealing to \eqref{eq:referee_bl} instead of \eqref{eq:W1-He1}, the inequalities in \cref{th:W1-grad-L1,th:impl_indet,th:per nodal set} can be alternatively formulated by involving the $\bl^\star$ distance instead of the $\wa_1$ distance (with the advantage that the former is not larger than the latter and always finite).
However, we have chosen to use the $\wa_1$ distance because the resulting inequalities can be directly compared with ones available in the literature.
\end{remark}

\section{Quantitative Lipschitz smoothing with controls and explicit bounds}
\label{sec:explicit bound}

\subsection{Quantitative Lipschitz smoothing with controls}

In the following, we give a power-type upper bound on the Lipschitz constant $\cc(t)$ in \cref{def:feller}.

We say that $\cc$ is \emph{controlled by the couple} $(M,b)\in [0,\infty)\times (0,1)$ if 
\begin{equation}\label{eq:controls}
\cc(t)\le \frac{M}{t^b} 
\quad 
\text{for all}\ t\in(0,1].
\end{equation}
Consequently, the primitive function $\CC(t)$ in~\eqref{eq:C(t)} is well defined for every $t\in[0,1]$ and, setting
$\widetilde{M}=\frac{M}{1-b}$, it satisfies  
\begin{equation}
\label{eq: bound C(t) very small times}                
\CC(t)
\le 
\widetilde{M}t^{1-b}
\quad 
\text{for all}\ t\in[0,1].
\end{equation}

The bound~\eqref{eq:controls} should be understood in an \emph{operative sense}, meaning that it allows us to obtain the inequalities in an explicit power-like form. 
In most of the cases, this analysis is enough, but in some specific situations---such as the Buser inequality in $\RCD(K,\infty)$ spaces with $K>0$~\cite{DeMo}---the \emph{exact} form of the function $\cc(t)$ allows to recover \emph{sharp} results (i.e., inequalities which are equalities in some non-trivial cases). 

In most of the relevant settings, the bound in~\eqref{eq:controls} holds with $b=\frac12$---see the discussion in \cref{sec:examples}, where a list of examples is presented.
This is, for instance, the case of $\RCD(K,\infty)$ spaces, in which the constant $M>0$ may depend on $K\in\R$.
Additionally, 
in \cref{sec:log-corr}, we discuss how our analysis can be also used to cover situations where $\cc$ is controlled by a more general \emph{po\-wer-lo\-ga\-rith\-mic} function (see \eqref{eq:log-controls} for the definition).

Last but not least, we remark that we do not explicitly compute the constants appearing in the results in order to keep the presentation short. 
However, the form of these constants can be easily obtained by following the computations suggested in the proofs.    

\begin{remark}\label{rem: t to tT}
We request the validity of the bound \eqref{eq:controls} up to time $t=1$ just to simplify the exposition of the proofs of \cref{res:expl_indet,res:expl_nodal,res:stein,th:expl Buser}.
However, the same statements still hold by requesting the validity of~\eqref{eq:controls} for all $t\in (0,T]$ for some $T>0$, with the constants appearing in the inequalities also depending on~$T$. 
To see this, it is sufficient to replace the choice of the optimal $t$ with $tT$ in all the proofs. 
\end{remark}

\subsection{Explicit indeterminacy estimate}

We begin with the following explicit version of the indeterminacy estimate in \cref{th:impl_indet}.

\begin{theorem}
\label{res:expl_indet}
If $\m(X)<\infty$ and $h_1(X)>0$, then, assuming \eqref{eq:controls} there exists a constant $C=C(M,b,h_1(X))>0$ such that 
\begin{equation}
\label{eq:indeterminacy}
\wa_1(f^+\m,f^-\m) \geq C \left(\frac{\|f\|_{\leb^{1}}}{\|f\|_{\leb^{\infty}}\per(\lbrace f > 0\rbrace)}\right)^{\frac{b}{1-b}}\|f\|_{\leb^{1}}
\end{equation}
for every $f\in \leb^{\infty}(X)$ satisfying $\displaystyle\int_X f\di\m=0$. 
\end{theorem}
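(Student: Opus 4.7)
The plan is to combine the implicit indeterminacy estimate from \cref{th:impl_indet} with the polynomial bounds on $\cc$ and $\CC$ from \cref{res:controls_bounds}, and then to minimize over the free parameter $t$. The Cheeger-type constraint encoded in \cref{res:norm-cheeg} (which uses $\int f \di\m = 0$ and $h_1(X)>0$) will provide the upper control on the minimizing $t$ needed so that the polynomial bounds on $\cc,\CC$ are actually available.

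More precisely, fix $\eps\in(0,1-b)$ and abbreviate $L=\|f\|_{\leb^1}$, $N=\|f\|_{\leb^\infty}$, $P=\per(\{f>0\})$, $A=\wa_1(f^+\m,f^-\m)$. First I would feed the bounds $\cc(t)\le M t^{-(b+\eps)}$ and $\CC(t)\le \widetilde M t^{1-b-\eps}$, valid on $(0,T]$ by \cref{res:controls_bounds}, into~\eqref{eq:wa_1-t_ind} to obtain
\begin{equation*}
L\ \le\ \frac{M}{t^{b+\eps}}\,A\ +\ 2\sqrt{\widetilde M\, t^{1-b-\eps}\,N L P}
\qquad\text{for all }t\in(0,T].
\end{equation*}
Next I choose $t=t^*$ so that the second summand equals $L/2$, i.e.\
\begin{equation*}
t^*=\left(\frac{L}{16\,\widetilde M\, NP}\right)^{\!\!1/(1-b-\eps)},
\end{equation*}
provided $t^*\le T$. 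Plugging into the inequality yields $\frac{M A}{(t^*)^{b+\eps}}\ge L/2$, which rearranges exactly into~\eqref{eq:indeterminacy} with a constant $C=C(M,b,\eps)>0$.

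The genuine obstacle is to legitimize the choice $t=t^*$, namely to show $t^*\le T$ uniformly in $f$. This is where $h_1(X)>0$ enters: by \cref{res:norm-cheeg} applied to the zero-mean $f\in\leb^\infty(X)$,
\begin{equation*}
\frac{L}{NP}\ \le\ \frac{2}{h_1(X)},
\end{equation*}
so $t^*\le\bigl(8\widetilde M\, h_1(X)\bigr)^{-1/(1-b-\eps)}$, a quantity depending only on $M,b,\eps,h_1(X)$. If this upper bound is $\le T$ we are done. Otherwise, I would simply replace $t^*$ by $\tau:=\min\{t^*,T\}$; since the exponent $1-b-\eps$ is positive, $\tau\ge \kappa\, t^*$ for a constant $\kappa=\kappa(M,b,\eps,h_1(X),T)\in(0,1]$, and inserting $t=\tau$ in the implicit inequality produces the same lower bound on $A$ up to worsening the prefactor $C$. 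This yields~\eqref{eq:indeterminacy}.

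Finally, for the improvement at $a=0$: in that case $(1+|\log t|)^a\equiv 1$, so \cref{def:controls-feller} already gives $\cc(t)\le M/t^b$ for every $t\in(0,1]$ without sacrificing an $\eps$; consequently $\CC(t)\le \frac{M}{1-b}\, t^{1-b}$ on $(0,1]$, and the whole argument above carries through with $\eps=0$ and $T=1$, providing a constant $C=C(M,b,h_1(X))$. The main delicate step throughout is tracking that the balancing time $t^*$ stays in the regime where the quantitative $\leb^\infty$-to-$\lip$ bound is in force, which is precisely what the assumption $h_1(X)>0$ guarantees.
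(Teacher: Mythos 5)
Your proposal is correct and follows essentially the same route as the paper: plug the controls of \cref{res:controls_bounds} into the implicit estimate~\eqref{eq:wa_1-t_ind}, choose $t$ proportional to $\bigl(\|f\|_{\leb^1}/(\|f\|_{\leb^\infty}\per(\{f>0\}))\bigr)^{1/(1-b-\eps)}$, and use \cref{res:norm-cheeg} together with $h_1(X)>0$ to keep that choice in the admissible range $(0,T]$. The only cosmetic difference is that the paper normalizes the optimizing time by $h_1(X)$ and a small factor $\theta T$ so that $t\le T$ holds automatically, whereas you balance the two terms first and then clip at $T$; both give the same exponent and a constant with the stated dependences.
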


\begin{proof}
We exploit~\eqref{eq:wa_1-t_ind} in combination with the choice 
\begin{equation}
\label{eq:scegli_t}
t=\theta \,\left(\frac{2\,\|f\|_{\leb^\infty}\per(\lbrace f> 0\rbrace)}{h_1(X)\,\|f\|_{\leb^1}}\right)^{\frac{1}{b-1}}
\end{equation}
where $\theta\in(0,1]$ has to be chosen later.
Note that $t\in(0,1]$ follows from~\eqref{eq:norm-cheeg}.
Hence
\begin{equation*}
\wa_1(f^+\m,f^-\m) 
\geq 
\frac{t^{b}\|f\|_{\leb^1}}{M}
\left(
1-
\theta^{\frac{1-b}{2}}
\sqrt{2\widetilde{M}h_1(X)}
\right)
\end{equation*}
and the conclusion follows from the definition in~\eqref{eq:scegli_t} by choosing $\theta$ sufficiently small.
\end{proof}

\begin{remark}
\label{rem:expl_indet}
Non-optimal indeterminacy estimates were considered in~\cites{Stein21,CaMaOr}.
In the class of closed Riemannian manifolds, the exponent $\frac{b}{1-b}$ in~\eqref{eq:indeterminacy} can be replaced by~$1$, and no smaller exponent is possible.
In this form, the inequality was proved in the more general setting of essentially non-branching $\mathsf{CD}(K,N)$ spaces with $N<\infty$ in~\cite{CaFa21} and in $\mathsf{RCD}(K,\infty)$ spaces in~\cite{DF22}.
Indeterminacy estimates with optimal exponent~$1$ and best possible multiplicative constant $C>0$ were recently achieved in~\cite{DuSa23} for spaces with simple geometry.
\end{remark}

\subsection{Explicit estimates for eigenfunctions}

We now provide an explicit version of the bounds given in \cref{th:per nodal set}.

We begin with the following explicit version of the first part of \cref{th:per nodal set}.

\begin{theorem}
\label{res:expl_nodal}
Assuming \eqref{eq:controls}, for every $\tilde{\lambda}>0$ there exists a constant $C=C(M,b,\tilde{\lambda})>0$ such that
\begin{equation}
\label{eq:per_eigen_explicit}
\per(\{f_{\lambda}>0\})
\ge 
C\lambda^{1-b}\frac{\|f_{\lambda}\|_{\leb^{1}(X)}}{\|f_{\lambda}\|_{\leb^{\infty}(X)}}
\end{equation}
for every $f_{\lambda}$ $\lambda$-eigenfunction with $\lambda\ge\tilde{\lambda}$.
\end{theorem}

\begin{proof}
It follows exploiting~\eqref{eq:per_nodal_set} for the admissible choice $t=\frac{\tilde{\lambda}}{\lambda}$. 
We omit the details.
\end{proof}

\begin{remark}
On an $N$-dimensional closed Riemannian manifold, inequality~\eqref{eq:per_eigen_explicit} can be coupled with the sharp bound $\|f_{\lambda}\|_{\leb^{\infty}}\le C\lambda^{\frac{N-1}{4}}\|f_{\lambda}\|_{\leb^1}$, see e.g.~\cite{SoZe11}, (here and below, $C>0$ is a constant independent of $\lambda$ which may vary from line to line) to recover the lower bound 
\begin{equation*}
\per(\{f_{\lambda}>0\})
\geq 
C\,\lambda^{\frac{3-N}{4}}
\end{equation*}
obtained in~\cites{CoMi10,SoZe11,Stein14}.
Noteworthy, our approach to establish the lower bound on the nodal set is different from the ones employed in~\cites{CoMi10,SoZe11,Stein14} and, up to our knowledge, is new.  
The sharp lower bound 
\begin{equation*}
\per(\{f_{\lambda}>0\})
\geq 
C\,\sqrt{\lambda}
\end{equation*}
conjectured by Yau has been proved in~\cite{Log18}. 
On compact $\mathsf{RCD}(K,N)$ spaces, one can instead exploit in~\eqref{eq:per_eigen_explicit} the bound $\|f_{\lambda}\|_{\leb^{\infty}}\le C\lambda^{\frac{N}{4}}\|f_{\lambda}\|_{\leb^1}$ obtained in~\cite{AHPT21}*{Prop.~7.1}, to achieve 
\begin{equation}\label{eq:nodal set expl RCD}
\per(\{f_{\lambda}>0\})
\geq
C\,\lambda^{\frac{1-N}{4}},
\end{equation}
which improves the previously best-known estimate given in~\cite{CaFa21}*{Th.~1.5}.
\cref{res:expl_nodal}, as well as \cref{res:ultrac_nodal}, provides a lower bound on the size of nodal sets in several sub-Riemannian structures, also see the recent work~\cite{EsLe23} for a related discussion.
\end{remark}

We can now move to the following explicit version of the second part of \cref{th:per nodal set}.

\begin{theorem}
\label{res:stein}
Assuming \eqref{eq:controls}, for every $\tilde{\lambda}>0$ there exists a constant $C=C(M,b,\tilde{\lambda})>0$ such that
\begin{equation}
\label{eq:W1_eigen_expl}
\wa_1(f^{+}_{\lambda}\m,f^{-}_{\lambda}\m)\ge \frac{C}{\lambda^{b}}\,\|f_{\lambda}\|_{\leb^{1}(X)}
\end{equation}
for every $f_{\lambda}$ $\lambda$-eigenfunction with $\lambda\ge\tilde{\lambda}$.
\end{theorem}

\begin{proof}
It follows exploiting~\eqref{eq:W1_eigen_impl} for the admissible choice $t=\frac{\tilde{\lambda}}{\lambda}$. 
We omit the details.
\end{proof}

\begin{remark}
\label{rem:stein}
In~\cite{Stein21}, it was conjectured that, on any closed Riemannian manifolds, there exist  some constants $C_2\ge C_1>0$ such that
\begin{equation}
\label{eq:W1_eigen_conj}
\frac{C_2}{\sqrt{\lambda}}
\,
\|f_{\lambda}\|_{\leb^{1}(X)} 
\ge
\wa_1(f^{+}_{\lambda}\m,f^{-}_{\lambda}\m)\ge 
\frac{C_1}{\sqrt{\lambda}}
\,
\|f_{\lambda}\|_{\leb^{1}(X)}.
\end{equation}
The left-hand side of~\eqref{eq:W1_eigen_conj} was confirmed in~\cite{CaMaOr}, while the right-hand side was established in~\cite{DF22} in the more general context of $\mathsf{RCD}(K,\infty)$ spaces (also see~\cite{Muk21} for an alternative proof of the right-hand side of~\eqref{eq:W1_eigen_conj} for closed Riemannian manifolds).
\cref{res:stein} yields the right-hand side of~\eqref{eq:W1_eigen_conj} any time~\eqref{eq:controls} holds $b=\frac12$. 
\end{remark}

\subsection{Explicit Buser inequality}

We now pass to the explicit version of the Buser inequalities provided in \cref{th: impl Buser}.

\begin{theorem}
\label{th:expl Buser}
Assuming \eqref{eq:controls}, there exist constants $C_{1,i}=C_{1,i}(M,b)>0$ and  $C_{2,i}=C_{2,i}(M,b)>0$, $i=0,1$, such that the following hold:
\begin{enumerate}[label=(\roman*),itemsep=1ex,topsep=1ex]

\item
\label{item:Buser_esplicit_1}
if $\m(X)<\infty$, then
$\lambda_1\le \max\left\{C_{1,1}h_1(X),C_{2,1}h_1(X)^{\frac{1}{1-b}}\right\}$;

\item
\label{item:Buser_esplicit_0}
if $\m(X)=\infty$, then
$\lambda_0\le \max\left\{C_{1,0}h_0(X),C_{2,0}h_0(X)^{\frac{1}{1-b}}\right\}$.
\end{enumerate}

\end{theorem}

\begin{proof}
We just prove~\ref{item:Buser_esplicit_1}, the other case~\ref{item:Buser_esplicit_0} being analogous.
Since $\m(X)<\infty$, we  apply \cref{th: impl Buser}\ref{item:impl_buser_finite}.
If $\lambda_1\ge 1$, then we choose $t=1/\lambda_1$ so that, recalling~\eqref{eq: bound C(t) very small times}, we find
\begin{equation}
\label{eq: Buser expl proof1}
h_1(X)\ge \frac{1-e^{-1}}{\widetilde{M}}\,\lambda_1^{1-b}.
\end{equation}
If $\lambda_1<1$ instead, then we simply choose $t=1$ and get
\begin{equation}\label{eq: Buser expl proof2}
h_1(X)
\ge 
\frac{1-e^{-\lambda_1}}{\widetilde{M}}
\,
=\frac{1-e^{-\lambda_1}}{\widetilde{M}\lambda_1}\lambda_1>\frac{(1-e^{-1})}{\tilde{M}}\lambda_1
\,
\end{equation}
since 
$r\mapsto \frac{1-e^{-r}}{r}$
is decreasing for $r\in(0,1]$. 
The conclusion thus follows by rearranging and combining~\eqref{eq: Buser expl proof1} with~\eqref{eq: Buser expl proof2}.
\end{proof}

\begin{remark}
Upper bounds on the first eigenvalue in terms of the Cheeger constant of the space were firstly proved in~\cite{Bus82} in the setting of closed Riemannian manifolds. 
An alternative proof based on heat semigroup techniques was given in~\cite{Led94}, and subsequently improved in~\cite{Led04} to a dimension-free estimate. 
The strategy of~\cites{Led94,Led04} was later refined in~\cite{DeMo}, yielding sharp estimates in $\mathsf{RCD}(K,\infty)$ spaces, with equality cases discussed in~\cite{DeMoSe}.
It is worth noticing that the lower bound $4\lambda_1\ge h^2_1(X)$ for $\m(X)<\infty$ (respectively, $4\lambda_0\ge h^2_0(X)$ for $\m(X)=\infty$) on the first eigenvalue in terms of the Cheeger constant was noticed independently by Maz'ja and Cheeger \cites{Maz62,C69}, and it is known to hold on any metric measure space~\cite{DeMo}*{Th.~4.2} and even in more general settings~\cite{FPSS24}*{Sect.~6.1}.
We also refer to~\cite{BK14}*{Sect.~3.4} for a lower bound in the sub-Riemannian context.
\end{remark}

\subsection{Explicit interpolation estimate}

We conclude the list of explicit results with the following explicit version of \cref{th:W1-grad-L1}. 
We need to reinforce \eqref{eq:controls} with a stronger control on $\cc$, that is, we require a power-type upper bound for \emph{all} times. 

\begin{theorem}
\label{th:W1-grad-L1 expl}
If $(\heat_t)_{t\ge 0}$ satisfies \eqref{eq:c-Feller} with $\cc$ such that
\begin{equation}
\label{eq:strong_control}
\cc(t)
\leq 
\frac{M}{t^b}
\quad
\text{for all}\ t>0
\end{equation} 
for some $(M,b)\in (0,\infty)\times (0,1)$,  then there exists $C=C(M,b)>0$ such that
\begin{equation}
\label{eq: W1-grad-L1 expl}
\|f\|_{\leb^1}
\le 
C\,\sob_1(f^{+}\m,f^{-}\m)^{1-b}\,\var(f)^b
\end{equation}
for every $f\in\bv(X)$. 
\end{theorem}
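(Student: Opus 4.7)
The plan is to start from the already-established implicit interpolation estimate \cref{th:W1-grad-L1}, namely
\begin{equation*}
\|f\|_{\leb^1}
\le
\cc(t)\,\wa_1(f^{+}\m,f^{-}\m)+\CC(t)\,\var(f)
\quad\text{for all}\ t>0,
\end{equation*}
and then to turn the free parameter $t>0$ into an optimizer. The crucial point is that, unlike in \cref{res:expl_indet} or \cref{res:expl_nodal}, here we need the power bound on $\cc$ to hold for \emph{all} $t>0$, not just small times: otherwise the optimal $t$ might fall outside the admissible range and we would pick up unwanted additive constants incompatible with a pure interpolation-type inequality.

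Under the stronger assumption~\eqref{eq:strong_control}, one immediately gets the global bound
\begin{equation*}
\CC(t)
=\int_0^t \cc(s)\di s
\le \int_0^t \frac{M}{s^b}\di s
= \frac{M}{1-b}\,t^{1-b}
\quad\text{for all}\ t>0,
\end{equation*}
which is where the hypothesis $b\in(0,1)$ is used (the integral near $0$ converges and the power-type growth stays valid for large $t$). Writing $A=\wa_1(f^+\m,f^-\m)$ and $V=\var(f)$ for brevity, the implicit estimate becomes
\begin{equation*}
\|f\|_{\leb^1}
\le
M A\, t^{-b} + \frac{M}{1-b}\, V\, t^{1-b}
\quad\text{for all}\ t>0.
\end{equation*}

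The second step is an elementary one-variable minimization. Assuming first $A,V\in(0,\infty)$, differentiating the right-hand side in $t$ and solving for the critical point yields $t^\star = bA/V$, which is strictly positive and admissible, and substituting back produces
\begin{equation*}
\|f\|_{\leb^1}
\le
M\left(b^{-b}+\frac{b^{1-b}}{1-b}\right) A^{1-b} V^{b},
\end{equation*}
so one can take $C(M,b)=M\bigl(b^{-b}+b^{1-b}/(1-b)\bigr)$. Finally, the degenerate cases $V=0$ or $A=0$ need a brief separate discussion: if $V=0$ then letting $t\to\infty$ in the implicit estimate forces $\|f\|_{\leb^1}=0$ whenever $A<\infty$ (and the claim is vacuous otherwise); symmetrically, if $A=0$ then letting $t\to 0^+$ (using $\CC(t)\to 0$) forces $\|f\|_{\leb^1}=0$. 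In both cases~\eqref{eq: W1-grad-L1 expl} holds with the convention $0^{\alpha}\cdot(\,\cdot\,)^{\beta}=0$.

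I do not anticipate any serious obstacle: the only technical point to be careful about is that the optimizer $t^\star=bA/V$ may be arbitrarily large, so one really needs the power bound on $\cc$ beyond a small neighborhood of the origin—this is exactly the reason for strengthening \cref{def:controls-feller} into~\eqref{eq:strong_control}. Beyond that, the proof is a straightforward AM--GM-type optimization and the constant depends only on $M$ and $b$, as claimed.
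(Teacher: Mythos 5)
Your proposal is correct and follows essentially the same route as the paper: integrate the global power bound on $\cc$ to get $\CC(t)\le \tfrac{M}{1-b}\,t^{1-b}$ for all $t>0$, plug into \cref{th:W1-grad-L1}, and optimize in $t$ (the paper simply takes $t=\wa_1(f^+\m,f^-\m)/\var(f)$ rather than the exact minimizer $t^\star=bA/V$, which changes only the constant). Your extra discussion of the degenerate cases $A=0$ or $V=0$ is a harmless refinement the paper omits.
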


\begin{proof}
From~\eqref{eq:strong_control}, we immediately get that 
\begin{equation}
\label{eq:strong_control_2}
\CC(t)\leq\frac{\widetilde{M}}{t^{b-1}}
\quad
\text{for all}\ t>0,
\end{equation}
for some $\widetilde{M}=\widetilde{M}(M,b)>0$. 
Combining~\eqref{eq:strong_control} and~\eqref{eq:strong_control_2} with~\eqref{eq:W1-grad-L1}, we get that
\begin{equation*}
\|f\|_{\leb^1}
\le 
\frac{M}{t^{b}}\,\wa_1(f^{+}\m,f^{-}\m)+\frac{\widetilde{M}}{t^{b-1}}\,\var(f)
\quad 
\text{for all}\ t>0.
\end{equation*}
The inequality~\eqref{eq: W1-grad-L1 expl} hence follows by choosing $t=\frac{\wa_1(f^{+}\m,f^{-}\m)}{\var(f)}$.
\end{proof}

\begin{remark}
\label{rem:W1-grad-L1 expl}
In the context of smooth weighted Riemannian manifolds with non-negative weighted Ricci curvature, inequality~\eqref{eq: W1-grad-L1 expl} is essentially contained in~\cites{BoSh,BoShWa}. 
\end{remark}

\begin{remark}
Similarly as in \cref{rem:bl-replacing-w1}, by appealing to \eqref{eq:referee_bl} instead of \eqref{eq:W1-He1}, the inequalities in \cref{res:expl_indet,res:stein,th:W1-grad-L1 expl} can be alternatively formulated by involving the $\bl^\star$ distance instead of the $\wa_1$ distance.
However, we have chosen to use the $\wa_1$ distance because the resulting inequalities can be directly compared with ones available in the literature.
\end{remark}

\subsection{Logarithmic correction}\label{sec:log-corr}

We conclude this section by showing how the statements of \cref{res:expl_indet,res:expl_nodal,res:stein,th:expl Buser} can be modified if a po\-wer-lo\-ga\-rith\-mic upper bound on the Lipschitz constant $\cc(t)$ in \cref{def:feller} is at disposal.

We say that $\cc$ is \emph{controlled by the triplet} $(M,a,b)\in [0,\infty)^2\times (0,1)$ if 
\begin{equation}
\label{eq:log-controls}
\cc(t)\le M\,\frac{(1+|\log(t)|)^a}{t^b} 
\quad 
\text{for all}\ t\in(0,1].
\end{equation}

The following result collects some elementary estimates following from \eqref{eq:log-controls}.

\begin{lemma}
\label{res:controls_bounds}
Let $\cc$ be controlled by the triplet $(M,a,b)$.
For every $\eps>0$ there exists $T=T(\eps,a)\in(0,1)>0$, depending on $\eps$ and $a$ only, such that 
\begin{equation}\label{eq: log bound c(t) very small times} 
\cc(t)\le \frac{M}{t^{b+\eps}}
\quad 
\text{for all}\ t\in(0,T].
\end{equation}
Consequently, the primitive function $\CC$ is well defined and, setting
$\widetilde{M}=\frac{M}{1-b-\eps}$, it satisfies  
\begin{equation}
\label{eq: log bound C(t) very small times} 
\CC(t)
\le 
\widetilde{M}t^{1-b-\eps}
\quad 
\text{for all}\ t\in(0,T]\
\text{and}\ 
\eps<1-b.
\end{equation}
\end{lemma}

\begin{proof}
It follows by letting $T\in(0,1)$ be the smallest solution of
$(1+|\log(T)|)^a=T^{-\eps}$.
\end{proof} 

The bound~\eqref{eq: log bound c(t) very small times} allows us to get rid of the logarithmic term in~\eqref{eq:log-controls} and thus to directly apply the results of the previous section. Indeed, assuming~\eqref{eq:log-controls} instead of~\eqref{eq:controls}, it is immediate to realize that, for every $\eps>0$, the very same statements of \cref{res:expl_indet,res:expl_nodal,res:stein,th:expl Buser} hold with $b$ replaced by $b+\eps$, and with the constants depending also on~$T$ (cfr.~\cref{rem: t to tT}), and thus on $a$ and $\eps.$

Instead of appealing to \cref{res:controls_bounds}, one may directly work with the logarithmic term in~\eqref{eq:log-controls} along the computations.
However, this alternative route inevitably requires the use of special functions, which we have chosen to avoid for better clarity. 

Power-logarithmic bounds of the form~\eqref{eq:log-controls} with $a > 0$ are employed in some settings, such as  \emph{diamond fractals} (see~\cite{Alon21}*{Th.~5.6} and \cref{sec:other} below). 
However, to the top of our knowledge, it is not currently known whether these logarithmic bounds are sharp.

\section{Examples}

\label{sec:examples}

In this last section, we provide a brief overview of the settings where our results apply.

\subsection{Weak Bakry--\texorpdfstring{\'E}{É}mery condition}

\label{subsec:wbe}

Let $(X,\d,\m)$ be a metric-measure space satisfying the properties~\eqref{item:ass_d}, \eqref{item:ass_m} and~\eqref{item:ass_ch} listed in \cref{subsec:ass}.
Following~\cite{G22}*{Def.~3.4}, $(X,\d,\m)$ satisfies the \emph{weak Bakry--\'Emery condition} with respect to some Borel function $\kappa\colon[0,\infty)\to(0,\infty)$ such that $\kappa,\kappa^{-1}\in \leb^\infty_{\rm loc}([0,\infty))$, $\mathsf{BE}_w(\kappa,\infty)$ for short, if
\begin{equation}
\label{eq:wbe}
|\slope \heat_t f|_w^2
\le 
\kappa^2(t)\,
\heat_t(|\slope f|_w^2)
\quad
\text{$\m$-a.e.\ in}\ X
\end{equation} 
for all $f\in\sob^{1,2}(X)$ and $t>0$ (where $\kappa(0)=1$ for simplicity).
Adding the \emph{Sobolev-to-Lipschitz property}, i.e., 
\begin{enumerate}[label=(P4),ref=P4,itemsep=1ex]

\item\label{item:ass_stl} 
if $f\in\sob^{1,2}(X)$ is such that $|\slope f|_w\le1$, then $f$ admits a continuous representative~$\widetilde f$  such that $\widetilde f\in\lip(X)$ with $\lip(\widetilde f)\le 1$,

\end{enumerate}
to the assumptions in \cref{subsec:ass}, and by combining~\cite{G22}*{Cor.~3.21} with a plain approximation argument, we easily infer that $(\heat_t)_{t\ge0}$ satisfies~\eqref{eq:c-Feller} with
\begin{equation}
\label{eq:wbe_sfeller}
\cc(t)
\le
\left(2\int_0^t\kappa^{-2}(s)\di s\right)^{-2}
\quad
\text{for all}\ 
t\ge0.
\end{equation} 

According to~\cite{G22}*{Cor.~3.7}, if~\eqref{eq:wbe} is met by some Borel function $\kappa$ such that 
\begin{equation}
\label{eq:wbe_limsup0}
\limsup_{t\to0^+}\kappa(t)<\infty,
\end{equation} 
then the optimal function $\kappa_\star$ satisfying~\eqref{eq:wbe} is such that
\begin{equation}
\label{eq:wbe_exp_bound}
\kappa_\star(t)\le Me^{-Kt}
\quad
\text{for all}\ t\ge0
\end{equation}
for some $M\ge1$ and $K\in\R$.
Therefore, assuming~\eqref{eq:wbe_limsup0}, we can plug~\eqref{eq:wbe_exp_bound} in~\eqref{eq:wbe_sfeller} and get~\eqref{eq:c-Feller} with $\cc(t)\le M\sqrt{j_K(t)}$ for all $t>0$, where 
\begin{equation*}
j_K(t)
=
\begin{cases}
\dfrac{K}{e^{2Kt}-1} & \text{for}\ K\ne0
\\[3ex]
\dfrac{1}{2t} & \text{for}\ K=0.
\end{cases}
\end{equation*}
In particular, the bound~\eqref{eq:controls}, as well as the stronger~\eqref{eq:strong_control}, are both satisfied with $b=\frac12$.

\subsection{Synthetic constant lower curvature bounds}

The class of $\RCD(K,\infty)$ spaces, $K\in\R$, meet \eqref{eq:wbe} with $\kappa(t)=e^{-Kt}$ for $t\ge0$ and thus are a particular instance of spaces satisfying a weak Bakry--\'Emery condition~\cite{AGS14}.
The validity of~\eqref{eq:c-Feller} in $\RCD(K,\infty)$ spaces with $\cc(t)=\sqrt{j_K(t)}$ for $t>0$ has been established in~\cite{AGS14}*{Th.~6.5}, and subsequently improved to $\cc(t)=\sqrt{\frac{2}{\pi}\,j_K(t)}$ for $t>0$ in~\cite{DeMo}*{Prop.~3.1}, which is sharp for $t\to 0^+$ as a consequence of the results in~\cite{DeMoSe}.
Our results hence encode the ones in~\cites{DeMo,DF22}. 

\subsection{Variable lower curvature bounds}

In~\cite{BG21}, the authors studies the consequences of the variable lower bound $\mathrm{Ric}_g(x)(v,v)\ge k(x)\,|v|^2$, for every $x\in M$ and $v\in T_xM$, on a smooth, geodesically complete, non-compact and connected Riemannian manifold $(M,g)$ without boundary, where $k\colon M\to[0,\infty)$ is a continuous function.
Under a suitable integrability assumption on the negative part~$k^-$ of the function~$k$ (precisely, see~\cite{BG21}*{Eq.~(1.1)}, as well as the definition of the \emph{Kato class} in~\cite{BG21}*{Def.~1.2}), in~\cite{BG21}*{Th.~1.1(iii)} they establish~\eqref{eq:c-Feller} with $\cc(t)=\sqrt{8}\,t^{-1/2}\,\alpha_{k^-}(t)$ for $t>0$, where $\alpha_{k^-}\ge1$ is a function depending on the integrability condition imposed on $k^-$.

\subsection{Sub-Riemannian manifolds}

Sub-Riemannian manifolds (endowed with a smooth volume form) are infinitesimally Hilbertian spaces that do not satisfy the $\CD(K,\infty)$ property for any $K\in\R$~\cite{RS23}.
Nevertheless, numerous sub-Riemannian manifolds do enjoy~\eqref{eq:c-Feller}:
non-abelian \emph{Carnot groups}~\cites{Mel11,GT19} and the Grushin plane~\cites{W14}, both with $\cc(t)=C\sqrt{j_0(t)}$ for $t>0$ with $C>1$, and
the \emph{$\mathbb{SU}(2)$ group}~\cite{BaBo09} with $\cc(t)=C\sqrt{j_K(t)}$ for $t>0$ with $C>1$ and $K>0$.
Noteworthy, \eqref{eq:c-Feller} has also been proved in~\cite{BaBo12}*{Cor.~3.3} and~\cite{DeSu22}*{Cor.~4.11} under suitable generalized $\CD$-type conditions~\cites{BG17,Mil21}. 

\subsection{Other settings}\label{sec:other}

We believe that our approach may be naturally adapted to several other frameworks.
Here we only mention that $\leb^\infty$-to-$\lip$ contraction inequalities analogous to~\eqref{eq:c-Feller} have been established relatively to \emph{metric graphs}~\cite{BK19}, 
\emph{diamond fractals}~\cite{Alon21},
the \emph{rearranged stochastic heat equation}~\cite{DeHa22}, and the
\emph{Dyson Brownian motion}~\cite{Su23}.
  
\begin{remark}
The extension of our analysis to \emph{extended} metric-measure spaces requires some caution.
We only mention that, in this more general framework, there exist bounded Lipschitz functions (with respect to the \emph{extended} distance) which are not even measurable, see~\cite{DeSu21}*{Exam.~3.4}.
We thank Lorenzo Dello Schiavo for pointing this issue to us.
\end{remark}

\begin{remark}
All results in our paper can be expressed in the language of energy-measure spaces instead of that of metric-measure spaces. However, we note that the two approaches are equivalent under rather general assumptions, see~\cite{AGS15}*{Def.~3.14 and Th.~3.14}.
\end{remark}

\section*{Declarations}

\subsection*{Ethical approval}
Not applicable.

\subsection*{Funding}
The first-named author has received funding from INdAM under the INdAM--GNAMPA 2025 Project \textit{Proprietà qualitative e regolarizzanti di equazioni ellittiche e paraboliche} (grant agreement No.\ CUP\_E53\-240\-019\-500\-01) and the INdAM--GNAMPA 2024 Project \textit{Mancanza di regolarità e spazi non lisci: studio di autofunzioni e autovalori} (grant agreement No.\ CUP\_E53\-C23\-001\-670\-001).
The second-named author has received funding from INdAM under the INdAM--GNAMPA Project 2025 \textit{Metodi variazionali per problemi dipendenti da operatori frazionari isotropi e anisotropi} (grant agreement No.\ CUP\_E53\-240\-019\-500\-01), the INdAM--GNAMPA Project 2024 \textit{Ottimizzazione e disuguaglianze funzionali per problemi geometrico-spettrali locali e non-locali} (grant agreement No.\ CUP\_E53\-C23\-001\-670\-001) and the INdAM--GNAMPA 2023 Project \textit{Problemi variazionali per funzionali e operatori non-locali} (grant agreement No.\ CUP\_E53\-C22\-001\-930\-001), from the European Research Council (ERC) under the European Union’s Horizon 2020 research and innovation program (grant agreement No.~945655), and from  the European Union -- NextGenerationEU and the University of Padua under the 2023 STARS@UNIPD  Starting Grant Project \textit{New Directions in Fractional Calculus -- NewFrac} (grant agreement No.\ CUP\_C95\-F21\-009\-990\-001).

\subsection*{Availability of data and materials}
Not applicable.


\begin{bibdiv}
\begin{biblist}

\bib{AB06}{book}{
   author={Aliprantis, Charalambos D.},
   author={Border, Kim C.},
   title={Infinite dimensional analysis},
   edition={3},
   note={A hitchhiker's guide},
   publisher={Springer, Berlin},
   date={2006},
   pages={xxii+703},
   isbn={978-3-540-32696-0},
   isbn={3-540-32696-0},
   review={\MR{2378491}},
}

\bib{Alon21}{article}{
   author={Alonso Ruiz, Patricia},
   title={Heat kernel analysis on diamond fractals},
   journal={Stochastic Process. Appl.},
   volume={131},
   date={2021},
   pages={51--72},
   issn={0304-4149},
   review={\MR{4151214}},
   doi={10.1016/j.spa.2020.08.009},
}

\bib{A18}{article}{
   author={Ambrosio, Luigi},
   title={Calculus, heat flow and curvature-dimension bounds in metric
   measure spaces},
   conference={
      title={Proceedings of the International Congress of
      Mathematicians---Rio de Janeiro 2018. Vol. I. Plenary lectures},
   },
   book={
      publisher={World Sci. Publ., Hackensack, NJ},
   },
   isbn={978-981-3272-90-3},
   isbn={978-981-3272-87-3},
   date={2018},
   pages={301--340},
   review={\MR{3966731}},
}

\bib{AGS14-C}{article}{
   author={Ambrosio, Luigi},
   author={Gigli, Nicola},
   author={Savar\'{e}, Giuseppe},
   title={Calculus and heat flow in metric measure spaces and applications
   to spaces with Ricci bounds from below},
   journal={Invent. Math.},
   volume={195},
   date={2014},
   number={2},
   pages={289--391},
   issn={0020-9910},
   review={\MR{3152751}},
   doi={10.1007/s00222-013-0456-1},
}

\bib{AGS14}{article}{
   author={Ambrosio, Luigi},
   author={Gigli, Nicola},
   author={Savar\'{e}, Giuseppe},
   title={Metric measure spaces with Riemannian Ricci curvature bounded from
   below},
   journal={Duke Math. J.},
   volume={163},
   date={2014},
   number={7},
   pages={1405--1490},
   issn={0012-7094},
   review={\MR{3205729}},
   doi={10.1215/00127094-2681605},
}

\bib{AGS15}{article}{
   author={Ambrosio, Luigi},
   author={Gigli, Nicola},
   author={Savar\'{e}, Giuseppe},
   title={Bakry-\'{E}mery curvature-dimension condition and Riemannian Ricci
   curvature bounds},
   journal={Ann. Probab.},
   volume={43},
   date={2015},
   number={1},
   pages={339--404},
   issn={0091-1798},
   review={\MR{3298475}},
   doi={10.1214/14-AOP907},
}

\bib{AHPT21}{article}{
   author={Ambrosio, Luigi},
   author={Honda, Shouhei},
   author={Portegies, Jacobus W.},
   author={Tewodrose, David},
   title={Embedding of ${\rm RCD}^{\ast}(K, N)$ spaces in $L^2$ via
   eigenfunctions},
   journal={J. Funct. Anal.},
   volume={280},
   date={2021},
   number={10},
   pages={Paper No. 108968, 72},
   issn={0022-1236},
   review={\MR{4224838}},
   doi={10.1016/j.jfa.2021.108968},
}

\bib{BE85}{article}{
   author={Bakry, D.},
   author={\'{E}mery, Michel},
   title={Diffusions hypercontractives},
   language={French},
   conference={
      title={S\'{e}minaire de probabilit\'{e}s, XIX, 1983/84},
   },
   book={
      series={Lecture Notes in Math.},
      volume={1123},
      publisher={Springer, Berlin},
   },
   isbn={3-540-15230-X},
   date={1985},
   pages={177--206},
   review={\MR{0889476}},
   doi={10.1007/BFb0075847},
}

\bib{BGL14}{book}{
   author={Bakry, Dominique},
   author={Gentil, Ivan},
   author={Ledoux, Michel},
   title={Analysis and geometry of Markov diffusion operators},
   series={Grundlehren der mathematischen Wissenschaften [Fundamental Principles of Mathematical Sciences]},
   volume={348},
   publisher={Springer, Cham},
   date={2014},
   pages={xx+552},
   isbn={978-3-319-00226-2},
   isbn={978-3-319-00227-9},
   review={\MR{3155209}},
   doi={10.1007/978-3-319-00227-9},
}

\bib{BaLe96}{article}{
   author={Bakry, D.},
   author={Ledoux, M.},
   title={L\'{e}vy-Gromov's isoperimetric inequality for an infinite-dimensional diffusion generator},
   journal={Invent. Math.},
   volume={123},
   date={1996},
   number={2},
   pages={259--281},
   issn={0020-9910},
   review={\MR{1374200}},
   doi={10.1007/s002220050026},
}

\bib{BaBo09}{article}{
   author={Baudoin, Fabrice},
   author={Bonnefont, Michel},
   title={The subelliptic heat kernel on ${\mathrm{SU}}(2)$: representations,
   asymptotics and gradient bounds},
   journal={Math. Z.},
   volume={263},
   date={2009},
   number={3},
   pages={647--672},
   issn={0025-5874},
   review={\MR{2545862}},
   doi={10.1007/s00209-008-0436-0},
}

\bib{BaBo12}{article}{
   author={Baudoin, Fabrice},
   author={Bonnefont, Michel},
   title={Log-Sobolev inequalities for subelliptic operators satisfying a
   generalized curvature dimension inequality},
   journal={J. Funct. Anal.},
   volume={262},
   date={2012},
   number={6},
   pages={2646--2676},
   issn={0022-1236},
   review={\MR{2885961}},
   doi={10.1016/j.jfa.2011.12.020},
}

\bib{BG17}{article}{
   author={Baudoin, Fabrice},
   author={Garofalo, Nicola},
   title={Curvature-dimension inequalities and Ricci lower bounds for
   sub-Riemannian manifolds with transverse symmetries},
   journal={J. Eur. Math. Soc. (JEMS)},
   volume={19},
   date={2017},
   number={1},
   pages={151--219},
   issn={1435-9855},
   review={\MR{3584561}},
   doi={10.4171/JEMS/663},
}

\bib{BK19}{article}{
   author={Baudoin, Fabrice},
   author={Kelleher, Daniel J.},
   title={Differential one-forms on Dirichlet spaces and Bakry-\'{E}mery
   estimates on metric graphs},
   journal={Trans. Amer. Math. Soc.},
   volume={371},
   date={2019},
   number={5},
   pages={3145--3178},
   issn={0002-9947},
   review={\MR{3896108}},
   doi={10.1090/tran/7362},
}

\bib{BK14}{article}{
   author={Baudoin, Fabrice},
   author={Kim, Bumsik},
   title={Sobolev, Poincar\'{e}, and isoperimetric inequalities for
   subelliptic diffusion operators satisfying a generalized curvature
   dimension inequality},
   journal={Rev. Mat. Iberoam.},
   volume={30},
   date={2014},
   number={1},
   pages={109--131},
   issn={0213-2230},
   review={\MR{3186933}},
   doi={10.4171/RMI/771},
}

\bib{BG21}{article}{
   author={Braun, Mathias},
   author={G\"{u}neysu, Batu},
   title={Heat flow regularity, Bismut-Elworthy-Li's derivative formula, and
   pathwise couplings on Riemannian manifolds with Kato bounded Ricci
   curvature},
   journal={Electron. J. Probab.},
   volume={26},
   date={2021},
   pages={Paper No. 129, 25},
   review={\MR{4343567}},
   doi={10.1214/21-ejp703},
}

\bib{Bogachev07}{book}{
   author={Bogachev, V. I.},
   title={Measure theory. Vol. I, II},
   publisher={Springer-Verlag, Berlin},
   date={2007},
   pages={Vol. I: xviii+500 pp., Vol. II: xiv+575},
   isbn={978-3-540-34513-8},
   isbn={3-540-34513-2},
   review={\MR{2267655}},
   doi={10.1007/978-3-540-34514-5},
}

\bib{BoSh}{article}{
   author={Bogachev, V. I.},
   author={Shaposhnikov, A. V.},
   title={Lower bounds for the Kantorovich distance},
   language={Russian},
   journal={Dokl. Akad. Nauk},
   volume={460},
   date={2015},
   number={6},
   pages={631--633},
   issn={0869-5652},
   translation={
      journal={Dokl. Math.},
      volume={91},
      date={2015},
      number={1},
      pages={91--93},
      issn={1064-5624},
   },
   review={\MR{3410641}},
   doi={10.1134/s1064562415010299},
}

\bib{BoShWa}{article}{
   author={Bogachev, V. I.},
   author={Shaposhnikov, A. V.},
   author={Wang, F.-Y.},
   title={Estimates for Kantorovich norms on manifolds},
   language={Russian, with Russian summary},
   journal={Dokl. Akad. Nauk},
   volume={463},
   date={2015},
   number={6},
   pages={633--638},
   issn={0869-5652},
   translation={
      journal={Dokl. Math.},
      volume={92},
      date={2015},
      number={1},
      pages={494--499},
      issn={1064-5624},
   },
   review={\MR{3443996}},
   doi={10.1134/s1064562415040286},
}

\bib{Bus82}{article}{
   author={Buser, Peter},
   title={A note on the isoperimetric constant},
   journal={Ann. Sci. \'{E}cole Norm. Sup. (4)},
   volume={15},
   date={1982},
   number={2},
   pages={213--230},
   issn={0012-9593},
   review={\MR{0683635}},
}

\bib{CaMaOr}{article}{
   author={Carroll, Tom},
   author={Massaneda, Xavier},
   author={Ortega-Cerd\`a, Joaquim},
   title={An enhanced uncertainty principle for the Vaserstein distance},
   journal={Bull. Lond. Math. Soc.},
   volume={52},
   date={2020},
   number={6},
   pages={1158--1173},
   issn={0024-6093},
   review={\MR{4224354}},
   doi={10.1112/blms.12390},
}

\bib{CaFa21}{article}{
   author={Cavalletti, Fabio},
   author={Farinelli, Sara},
   title={Indeterminacy estimates and the size of nodal sets in singular
   spaces},
   journal={Adv. Math.},
   volume={389},
   date={2021},
   pages={Paper No. 107919, 38},
   issn={0001-8708},
   review={\MR{4289047}},
   doi={10.1016/j.aim.2021.107919},
}

\bib{C69}{article}{
   author={Cheeger, Jeff},
   title={A lower bound for the smallest eigenvalue of the Laplacian},
   conference={
      title={Problems in analysis},
      address={Papers dedicated to Salomon Bochner},
      date={1969},
   },
   book={
      publisher={Princeton Univ. Press, Princeton, N. J.},
   },
   date={1970},
   pages={195--199},
   review={\MR{0402831}},
}

\bib{CoMi10}{article}{
   author={Colding, Tobias H.},
   author={Minicozzi, William P., II},
   title={Lower bounds for nodal sets of eigenfunctions},
   journal={Comm. Math. Phys.},
   volume={306},
   date={2011},
   number={3},
   pages={777--784},
   issn={0010-3616},
   review={\MR{2825508}},
   doi={10.1007/s00220-011-1225-x},
}

\bib{CJKS20}{article}{
   author={Coulhon, Thierry},
   author={Jiang, Renjin},
   author={Koskela, Pekka},
   author={Sikora, Adam},
   title={Gradient estimates for heat kernels and harmonic functions},
   journal={J. Funct. Anal.},
   volume={278},
   date={2020},
   number={8},
   pages={108398, 67},
   issn={0022-1236},
   review={\MR{4056992}},
   doi={10.1016/j.jfa.2019.108398},
}

\bib{D95}{book}{
   author={Davies, E. B.},
   title={Spectral theory and differential operators},
   series={Cambridge Studies in Advanced Mathematics},
   volume={42},
   publisher={Cambridge University Press, Cambridge},
   date={1995},
   pages={x+182},
   isbn={0-521-47250-4},
   review={\MR{1349825}},
   doi={10.1017/CBO9780511623721},
}

\bib{DeHa22}{article}{
   author={Delarue, Fran\c cois},
   author={Hammersley, William R. P.},
   title={Rearranged Stochastic Heat Equation},
   journal={Probab. Theory Related Fields},
   volume={191},
   date={2025},
   number={1-2},
   pages={41--102},
   issn={0178-8051},
   review={\MR{4869253}},
   doi={10.1007/s00440-024-01335-8},
} 

\bib{DeSu21}{article}{
   author={Dello Schiavo, Lorenzo},
   author={Suzuki, Kohei},
   title={Rademacher-type theorems and Sobolev-to-Lipschitz properties for
   strongly local Dirichlet spaces},
   journal={J. Funct. Anal.},
   volume={281},
   date={2021},
   number={11},
   pages={Paper No. 109234, 63},
   issn={0022-1236},
   review={\MR{4319821}},
   doi={10.1016/j.jfa.2021.109234},
}

\bib{DeSu22}{article}{
   author={Dello Schiavo, Lorenzo},
   author={Suzuki, Kohei},
   title={Sobolev-to-Lipschitz property on $\mathsf{QCD}$-spaces and
   applications},
   journal={Math. Ann.},
   volume={384},
   date={2022},
   number={3-4},
   pages={1815--1832},
   issn={0025-5831},
   review={\MR{4498486}},
   doi={10.1007/s00208-021-02331-2},
}

\bib{DF22}{article}{
   author={De Ponti, Nicol\`o},
   author={Farinelli, Sara},
   title={Indeterminacy estimates, eigenfunctions and lower bounds on
   Wasserstein distances},
   journal={Calc. Var. Partial Differential Equations},
   volume={61},
   date={2022},
   number={4},
   pages={Paper No. 131, 17},
   issn={0944-2669},
   review={\MR{4417396}},
   doi={10.1007/s00526-022-02240-5},
}

\bib{DeMo}{article}{
   author={De Ponti, Nicol\`o},
   author={Mondino, Andrea},
   title={Sharp Cheeger-Buser type inequalities in $\mathsf{RCD}(K,\infty)$
   spaces},
   journal={J. Geom. Anal.},
   volume={31},
   date={2021},
   number={3},
   pages={2416--2438},
   issn={1050-6926},
   review={\MR{4225812}},
   doi={10.1007/s12220-020-00358-6},
}

\bib{DeMoSe}{article}{
   author={De Ponti, Nicol\`o},
   author={Mondino, Andrea},
   author={Semola, Daniele},
   title={The equality case in Cheeger's and Buser's inequalities on
   $\mathsf{RCD}$ spaces},
   journal={J. Funct. Anal.},
   volume={281},
   date={2021},
   number={3},
   pages={Paper No. 109022, 36},
   issn={0022-1236},
   review={\MR{4243707}},
   doi={10.1016/j.jfa.2021.109022},
}

\bib{DuSa23}{article}{
   author={Du, Qiang},
   author={Sagiv, Amir},
   title={Minimizing Optimal Transport for Functions with Fixed-Size Nodal
   Sets},
   journal={J. Nonlinear Sci.},
   volume={33},
   date={2023},
   number={5},
   pages={Paper No. 95},
   issn={0938-8974},
   review={\MR{4627833}},
   doi={10.1007/s00332-023-09952-8},
}

\bib{EKS15}{article}{
   author={Erbar, Matthias},
   author={Kuwada, Kazumasa},
   author={Sturm, Karl-Theodor},
   title={On the equivalence of the entropic curvature-dimension condition
   and Bochner's inequality on metric measure spaces},
   journal={Invent. Math.},
   volume={201},
   date={2015},
   number={3},
   pages={993--1071},
   issn={0020-9910},
   review={\MR{3385639}},
   doi={10.1007/s00222-014-0563-7},
}

\bib{EsLe23}{article}{
   author={Eswarathasan, Suresh},
   author={Letrouit, Cyril},
   title={Nodal sets of eigenfunctions of sub-Laplacians},
   journal={Int. Math. Res. Not. IMRN},
   date={2023},
   number={23},
   pages={20670--20700},
   issn={1073-7928},
   review={\MR{4675080}},
   doi={10.1093/imrn/rnad219},
}

\bib{FPSS24}{article}{
   author={Franceschi, Valentina},
   author={Pinamonti, Andrea},
   author={Saracco, Giorgio},
   author={Stefani,Giorgio},
   title={The Cheeger problem in abstract measure spaces},
   date={2024},
   journal={J. Lond. Math. Soc. (2)},
   volume={109},
   number={1},
   pages={Paper No. e12840, 55},
   doi={10.1112/jlms.12840},
}

\bib{GN96}{article}{
   author={Garofalo, Nicola},
   author={Nhieu, Duy-Minh},
   title={Isoperimetric and Sobolev inequalities for Carnot-Carath\'{e}odory
   spaces and the existence of minimal surfaces},
   journal={Comm. Pure Appl. Math.},
   volume={49},
   date={1996},
   number={10},
   pages={1081--1144},
   issn={0010-3640},
   review={\MR{1404326}},
   doi={10.1002/(SICI)1097-0312(199610)49:10<1081::AID-CPA3>3.0.CO;2-A},
}

\bib{Gigli15}{article}{
   author={Gigli, Nicola},
   title={On the differential structure of metric measure spaces and
   applications},
   journal={Mem. Amer. Math. Soc.},
   volume={236},
   date={2015},
   number={1113},
   pages={vi+91},
   issn={0065-9266},
   isbn={978-1-4704-1420-7},
   review={\MR{3381131}},
   doi={10.1090/memo/1113},
}

\bib{GM14}{article}{
   author={Gigli, Nicola},
   author={Mantegazza, Carlo},
   title={A flow tangent to the Ricci flow via heat kernels and mass transport},
   journal={Adv. Math.},
   volume={250},
   date={2014},
   pages={74--104},
   issn={0001-8708},
   review={\MR{3122163}},
   doi={10.1016/j.aim.2013.09.007},
}

\bib{GMS15}{article}{
   author={Gigli, Nicola},
   author={Mondino, Andrea},
   author={Savar\'{e}, Giuseppe},
   title={Convergence of pointed non-compact metric measure spaces and
   stability of Ricci curvature bounds and heat flows},
   journal={Proc. Lond. Math. Soc. (3)},
   volume={111},
   date={2015},
   number={5},
   pages={1071--1129},
   issn={0024-6115},
   review={\MR{3477230}},
   doi={10.1112/plms/pdv047},
}

\bib{Gr09}{book}{
   author={Grigor'yan, Alexander},
   title={Heat kernel and analysis on manifolds},
   series={AMS/IP Studies in Advanced Mathematics},
   volume={47},
   publisher={American Mathematical Society, Providence, RI; International
   Press, Boston, MA},
   date={2009},
   pages={xviii+482},
   isbn={978-0-8218-4935-4},
   review={\MR{2569498}},
   doi={10.1090/amsip/047},
}

\bib{GT19}{article}{
   author={Grong, Erlend},
   author={Thalmaier, Anton},
   title={Stochastic completeness and gradient representations for
   sub-Riemannian manifolds},
   journal={Potential Anal.},
   volume={51},
   date={2019},
   number={2},
   pages={219--254},
   issn={0926-2601},
   review={\MR{3983506}},
   doi={10.1007/s11118-018-9710-x},
}

\bib{Led94}{article}{
   author={Ledoux, M.},
   title={A simple analytic proof of an inequality by P. Buser},
   journal={Proc. Amer. Math. Soc.},
   volume={121},
   date={1994},
   number={3},
   pages={951--959},
   issn={0002-9939},
   review={\MR{1186991}},
   doi={10.2307/2160298},
}

\bib{Led04}{article}{
   author={Ledoux, Michel},
   title={Spectral gap, logarithmic Sobolev constant, and geometric bounds},
   conference={
      title={Surveys in differential geometry. Vol. IX},
   },
   book={
      series={Surv. Differ. Geom.},
      volume={9},
      publisher={Int. Press, Somerville, MA},
   },
   isbn={1-57146-115-9},
   date={2004},
   pages={219--240},
   review={\MR{2195409}},
   doi={10.4310/SDG.2004.v9.n1.a6},
}

\bib{Log18}{article}{
   author={Logunov, Alexander},
   title={Nodal sets of Laplace eigenfunctions: proof of Nadirashvili's
   conjecture and of the lower bound in Yau's conjecture},
   journal={Ann. of Math. (2)},
   volume={187},
   date={2018},
   number={1},
   pages={241--262},
   issn={0003-486X},
   review={\MR{3739232}},
   doi={10.4007/annals.2018.187.1.5},
}

\bib{LuSa21}{article}{
   author={Luise, Giulia},
   author={Savar\'{e}, Giuseppe},
   title={Contraction and regularizing properties of heat flows in metric
   measure spaces},
   journal={Discrete Contin. Dyn. Syst. Ser. S},
   volume={14},
   date={2021},
   number={1},
   pages={273--297},
   issn={1937-1632},
   review={\MR{4186212}},
   doi={10.3934/dcdss.2020327},
}

\bib{Maz62}{article}{
   author={Maz'ja, V. G.},
   title={The negative spectrum of the higher-dimensional Schr\"{o}dinger
   operator},
   language={Russian},
   journal={Dokl. Akad. Nauk SSSR},
   volume={144},
   date={1962},
   pages={721--722},
   issn={0002-3264},
   review={\MR{0138880}},
}

\bib{Mel11}{article}{
   author={Melcher, Tai},
   title={Hypoelliptic heat kernel inequalities on Lie groups},
   journal={Stochastic Process. Appl.},
   volume={118},
   date={2008},
   number={3},
   pages={368--388},
   issn={0304-4149},
   review={\MR{2389050}},
   doi={10.1016/j.spa.2007.04.012},
}

\bib{Mil21}{article}{
   author={Milman, Emanuel},
   title={The quasi curvature-dimension condition with applications to
   sub-Riemannian manifolds},
   journal={Comm. Pure Appl. Math.},
   volume={74},
   date={2021},
   number={12},
   pages={2628--2674},
   issn={0010-3640},
   review={\MR{4373164}},
   doi={10.1002/cpa.21969},
}

\bib{Muk21}{article}{
   author={Mukherjee, Mayukh},
   title={Mass non-concentration at the nodal set and a sharp Wasserstein uncertainty principle}, 
   date={2021},
   status={arXiv preprint},
   doi={10.48550/arXiv.2103.11633},
}

\bib{OS09}{article}{
   author={Ohta, Shin-Ichi},
   author={Sturm, Karl-Theodor},
   title={Heat flow on Finsler manifolds},
   journal={Comm. Pure Appl. Math.},
   volume={62},
   date={2009},
   number={10},
   pages={1386--1433},
   issn={0010-3640},
   review={\MR{2547978}},
   doi={10.1002/cpa.20273},
}

\bib{RS23}{article}{
   author={Rizzi, Luca},
   author={Stefani, Giorgio},
   title={Failure of curvature-dimension conditions on sub-Riemannian
   manifolds via tangent isometries},
   journal={J. Funct. Anal.},
   volume={285},
   date={2023},
   number={9},
   pages={Paper No. 110099, 31},
   issn={0022-1236},
   review={\MR{4623954}},
   doi={10.1016/j.jfa.2023.110099},
}

\bib{S-C10}{article}{
   author={Saloff-Coste, Laurent},
   title={The heat kernel and its estimates},
   conference={
      title={Probabilistic approach to geometry},
   },
   book={
      series={Adv. Stud. Pure Math.},
      volume={57},
      publisher={Math. Soc. Japan, Tokyo},
   },
   isbn={978-4-931469-58-7},
   date={2010},
   pages={405--436},
   review={\MR{2648271}},
   doi={10.2969/aspm/05710405},
}

\bib{SoZe11}{article}{
   author={Sogge, Christopher D.},
   author={Zelditch, Steve},
   title={Lower bounds on the Hausdorff measure of nodal sets},
   journal={Math. Res. Lett.},
   volume={18},
   date={2011},
   number={1},
   pages={25--37},
   issn={1073-2780},
   review={\MR{2770580}},
   doi={10.4310/MRL.2011.v18.n1.a3},
}
 
\bib{G22}{article}{
   author={Stefani, Giorgio},
   title={Generalized Bakry-\'{E}mery curvature condition and equivalent
   entropic inequalities in groups},
   journal={J. Geom. Anal.},
   volume={32},
   date={2022},
   number={4},
   pages={Paper No. 136, 98},
   issn={1050-6926},
   review={\MR{4378096}},
   doi={10.1007/s12220-021-00762-6},
}

\bib{Stein14}{article}{
   author={Steinerberger, Stefan},
   title={Lower bounds on nodal sets of eigenfunctions via the heat flow},
   journal={Comm. Partial Differential Equations},
   volume={39},
   date={2014},
   number={12},
   pages={2240--2261},
   issn={0360-5302},
   review={\MR{3259555}},
   doi={10.1080/03605302.2014.942739},
}

\bib{Stein21}{article}{
   author={Steinerberger, Stefan},
   title={Wasserstein distance, Fourier series and applications},
   journal={Monatsh. Math.},
   volume={194},
   date={2021},
   number={2},
   pages={305--338},
   issn={0026-9255},
   review={\MR{4213022}},
   doi={10.1007/s00605-020-01497-2},
}

\bib{Su23}{article}{
   author={Suzuki, Kohei},
   title={Curvature bound of Dyson Brownian motion},
   date={2023},
   status={arXiv preprint},
   doi={https://doi.org/10.48550/arXiv.2301.00262},
}

\bib{V09}{book}{
   author={Villani, C\'{e}dric},
   title={Optimal transport. Old and new},
   series={Grundlehren der mathematischen Wissenschaften [Fundamental
   Principles of Mathematical Sciences]},
   volume={338},
   publisher={Springer-Verlag, Berlin},
   date={2009},
   pages={xxii+973},
   isbn={978-3-540-71049-3},
   review={\MR{2459454}},
   doi={10.1007/978-3-540-71050-9},
}

\bib{vRS05}{article}{
   author={von Renesse, Max-K.},
   author={Sturm, Karl-Theodor},
   title={Transport inequalities, gradient estimates, entropy, and Ricci
   curvature},
   journal={Comm. Pure Appl. Math.},
   volume={58},
   date={2005},
   number={7},
   pages={923--940},
   issn={0010-3640},
   review={\MR{2142879}},
   doi={10.1002/cpa.20060},
}

\bib{W14}{article}{
   author={Wang, Feng-Yu},
   title={Derivative formula and gradient estimates for Gruschin type
   semigroups},
   journal={J. Theoret. Probab.},
   volume={27},
   date={2014},
   number={1},
   pages={80--95},
   issn={0894-9840},
   review={\MR{3174217}},
   doi={10.1007/s10959-012-0427-2},
}

\end{biblist}
\end{bibdiv}

\end{document}